\newcommand{\stencilpt}[4][]{\node[rectangle,draw,minimum width=1.1cm,minimum height=0.5cm,font=\tiny,#1] at (#2) (#3) {#4}}
\title{The Runge--Kutta discontinuous Galerkin method with compact stencils for hyperbolic conservation laws}
\author{Qifan Chen\thanks{Department of Mathematics, The Ohio State University, Columbus, OH 43210, USA. (\email{chen.11010@osu.edu}) }
  \and Zheng Sun\thanks{Department of Mathematics, The University of Alabama,
		Tuscaloosa, AL 35487, USA. (\email{zsun30@ua.edu})} \and Yulong Xing\thanks{Department of Mathematics, The Ohio State University,
		Columbus, OH 43210, USA. (\email{xing.205@osu.edu}) }}
\newcommand{\dd}{\mathrm{d}}
\newcommand{\DG}{\mathrm{DG}}
\newcommand{\dxDG}{\nabla^\DG\cdot}
\newcommand{\dxl}{\nabla^\mathrm{loc}\cdot}
\newcommand{\hf}{\frac{1}{2}}
\newcommand{\ext}{\mathrm{ext}}
\newcommand{\inte}{\mathrm{int}}
\newcommand{\nm}[1]{\left\|#1\right\|}
\newcommand{\quand}{\quad \text{and} \quad}
\newtheorem{prop}{Proposition}[section]
\newtheorem{lem}{Lemma}[section]
\begin{document}

\maketitle 

\begin{abstract}
In this paper, we develop a new type of Runge--Kutta (RK) discontinuous Galerkin (DG) method for solving hyperbolic conservation laws. Compared with the original RKDG method, the new method features improved compactness and allows simple boundary treatment. The key idea is to hybridize two different spatial operators in an explicit RK scheme, utilizing local projected derivatives for inner RK stages and the usual DG spatial discretization for the final stage only. Limiters are applied only at the final stage for the control of spurious oscillations. We also explore the connections between our method and Lax--Wendroff DG schemes and ADER-DG schemes. 
Numerical examples are given to confirm that the new RKDG method is as accurate as the original RKDG method, while being more compact, for problems including two-dimensional Euler equations for compressible gas dynamics.  
\end{abstract}
\begin{keywords}
	 Discontinuous Galerkin method, Runge–Kutta method, stencil size, high-order numerical method, convergence, hyperbolic conservation laws
\end{keywords}
\begin{AMS}
	65L06, 65M12, 65M20, 65M60
\end{AMS}

\section{Introduction}

In this paper, we present a novel class of high-order Runge--Kutta (RK) discontinuous Galerkin  (DG) methods for solving hyperbolic conservation laws.
Compared with the original RKDG method proposed in \cite{rkdg1,rkdg2,rkdg3,rkdg4,rkdg5}, the new method features more compact stencil sizes and will be hence referred to as the compact RKDG (cRKDG) method throughout the paper.

The RKDG method for conservation laws was originally proposed by Cockburn et al. in a series of papers \cite{rkdg1,rkdg2,rkdg3,rkdg4,rkdg5}. The method combines the DG finite element spatial discretization \cite{reed1973triangular} with the strong-stability-preserving (SSP) RK time discretization \cite{gottlieb2001strong,gottlieb2011strong}. A limiting procedure is employed to control oscillations near physical discontinuities. The method naturally preserves the local conservation, features good $hp$ adaptivity, and can be fitted into complex geometries. Due to its various advantages, the RKDG method has become one of the primary numerical methods for the simulation of hyperbolic conservation laws. 

This paper aims to further improve the RKDG method by reducing its stencil size within each time step, which can reduce its data communication and lead to potential advantages in parallel computing or implicit time marching. The DG spatial discretization is typically more compact when compared to the finite difference method of the same order. For example, when approximating the first-order spatial derivatives of a function on a given cell, the DG method only utilizes data from immediate neighbors, while the finite difference method may require data from farther nodes for a high-order approximation. However, this spatial discretization advantage is not preserved by the one-step multi-stage RK time stepping. In each temporal stage, the spatial operator calls for information from neighboring cells, hence the stencil of the scheme will be expanded after each stage. For example, for one-dimensional scalar conservation laws with Lax--Friedrichs flux, the stencil size is 3 for the first-order forward-Euler time stepping, but becomes $2s+1$ for the high-order RK time stepping with $s$ stages. See \cref{fig1} for an illustration. This issue could be more evident with a very high-order RK method in multidimensions. 

There are some existing techniques in the literature circumventing the aforementioned issue. The main idea is to construct a one-step one-stage method for time marching. For instance, the Lax--Wendroff temporal discretization can be used as an alternative, which leads to the so-called Lax--Wendroff DG (LWDG) method \cite{qiu2005discontinuous,qiu2007numerical}. This method utilizes only information from immediate neighbors regardless of its temporal order. However, implementing this method can be very tedious, especially for high-order schemes for multidimensional systems, as one needs to compute the high-order derivatives of the flux function in the Cauchy--Kowalewski procedure. Another avenue is to employ the Arbitrary DERivative (ADER) time stepping  \cite{toro2001towards,titarev2002ader}, resulting in the so-called ADER-DG method, which is presented in the spacetime integral form of the conservation laws and is known to be closely connected to the LWDG scheme. See \cite{dumbser2006building,dumbser2008unified,rannabauer2018ader,gaburro2023high} and references therein. Besides LWDG and ADER-DG methods, there is also a stream of research addressing the compactness of DG methods for discretizing the second or higher-order spatial derivatives \cite{bassi1997high2,bassi2005discontinuous,van2005discontinuous,van2007discontinuous,peraire2008compact,luo2010reconstructed,yan2002local,cheng2008discontinuous,BCKX2013}. These studies focus on reducing the stencils of spatial operators but are less related with the issue arising from the multi-stage RK time stepping. If RK methods are used to discretize the corresponding
semi-discrete DG schemes for a time-dependent problem, the stencil size of the fully discrete schemes still grows with the number of RK stages.

In this paper, we propose a very different novel approach to tackle this issue. Our method is still based on the RK methods for temporal discretization, specifically using the Butcher form instead of the Shu--Osher form \cite{gottlieb2011strong}. The key idea is to hybridize two different types of spatial operators within each time step. For the inner stage(s) of the RK method, we employ the local derivative operator, which returns the $L^2$ projection of the spatial derivative of the flux function. While for the very last stage, we use the DG operator as in the original DG scheme. A limiter will be applied only once at the end of each time step, if necessary. The proposed cRKDG method has the following desirable properties. 
\begin{itemize}
    \item Stencil size: In each time step, the stencil of the cRKDG method only contains the current cell and its immediate neighbors, resulting in a compact stencil. 
    \item Convergence: We prove that if the cRKDG method converges boundedly, then its limit is a weak solution of the conservation laws. 
    \item Accuracy: We numerically observe that the cRKDG method attains $(k+1)$th order convergence rate when we couple a $(k+1)$th order RK method with a DG method using $k$th order spatial polynomials. This matches the optimal convergence of the original RKDG method. 
    \item Boundary error: We numerically observe that the cRKDG method attains the same optimal convergence rate when nonhomogeneous Dirichlet boundary conditions are imposed. In contrast, the original RKDG method will suffer the accuracy degeneracy under this setting  \cite{zhang2011third}. 
    
\end{itemize}

In addition to the aforementioned analysis and numerical observations, we conduct a linear stability analysis for the cRKDG method and identify its maximum CFL numbers for both second- and third-order cases. We employ the standard von Neumann analysis and investigate the eigen-structures of the amplification matrices. This approach aligns with the methodologies employed in previous works such as \cite{cockburn2001runge} for the RKDG method and \cite{qiu2005discontinuous} for the LWDG method. The details are omitted due to the space constraint. The maximum CFL numbers are documented in \cref{tab:CFL}. We can see from the table that the CFL number of the cRKDG method is the same as that of the RKDG method for the second-order case, and is slightly smaller for the third-order case. While comparing to the LWDG method, the CFL numbers of the cRKDG method are larger. 
\begin{table}[h!]
    \centering
    \begin{tabular}{c|c|c|c}
    \hline
        Maximum CFL& cRKDG & RKDG \cite{cockburn2001runge}& LWDG \cite{qiu2005discontinuous}\\
        \hline
         Second-order& 0.333 & 0.333 & 0.223 \\
         Third-order& 0.178 & 0.209 & 0.127 \\
    \hline
    \end{tabular}
    \caption{Maximum CFL numbers of the cRKDG method, the RKDG method, and the LWDG method. Here the upwind flux is used for cRKDG method and the RKDG method, and the Lax--Friedrichs flux \eqref{eq:LWDG:flux} is used for the LWDG method \cite{qiu2005discontinuous}. }
    \label{tab:CFL}
\end{table}

For general nonlinear conservation laws, the cRKDG method proposed in this paper is different from the LWDG and ADER-DG methods. However these methods are closely connected. Indeed, for linear conservation laws with constant coefficients, the cRKDG method and the LWDG method are actually equivalent under certain choice of the numerical fluxes,\footnote{While with the numerical flux in \cite{qiu2005discontinuous}, the LWDG method is not equivalent to the cRKDG method even in the linear case. See \cref{remark:LWDGflux}. This fact explains the different CFL numbers in \cref{tab:CFL}.} although the two methods are designed from totally different perspectives. Furthermore, the cRKDG method with a special implicit RK method is equivalent to the ADER-DG method with a special local predictor. As a result, one can expect that the cRKDG method may share some similar properties with the LWDG and ADER-DG methods. These connections may bring in new perspectives that could contribute to the further development and understanding of existing methods. 

The rest of the paper is organized as follows. In \Cref{sec:construction}, we review the original RKDG method and explain the formulation of the novel cRKDG method. In \Cref{sec:prop}, we summarize the theoretical properties of the cRKDG schemes and postpone some of the technical proofs to the appendix. Numerical tests are provided in \Cref{sec:Numerical results} and the conclusions are given in \Cref{sec:conclusions}.

\section{Numerical schemes}
\label{sec:construction}

In this section, we start by briefly reviewing the RKDG method and then describe in detail the construction of the cRKDG method. For ease of notation, we focus on scalar conservation laws, but the method can be extended to systems of conservation laws straightforwardly. 

\subsection{RKDG schemes}
Consider the hyperbolic conservation laws
\begin{equation}
	\partial_t u +\nabla \cdot f(u) = 0, \qquad u(x,0) = u_0(x).  
	\label{eq:conservation law}
\end{equation}
Let $\mathcal{T}_h = \{K\}$ be a partition of the spatial domain in $d$ dimension. We denote by $h_K$ the diameter of $K$ and $h = \max_{K\in \mathcal{T}_h} h_K$. Let $\partial K$ be the boundary of $K$. For each edge $e \in \partial K$, $\nu_{e,K}$ is the unit outward normal vector along $e$ with respect to $K$. The finite element space of the DG approximation is defined as $
	V_h = \{v:v|_{K}\in \mathcal{P}^k(K),\, \forall K \in \mathcal{T}_h\}$,
where $\mathcal{P}^k(K)$ denotes the set of polynomials of degree up to $k$ on the cell $K$. The standard semi-discrete DG method for solving \eqref{eq:conservation law} is defined as follows: find $u_h \in V_h$ such that on each $K\in \mathcal{T}_h$, 
\begin{equation}\label{eq:DGsemiweak}
	\int_K\left(u_{h}\right)_{t} v_h \dd x-\int_K f\left(u_{h}\right) \cdot \nabla v_h \dd x+\sum_{e\in \partial K} \int_{e} \widehat{f\cdot \nu_{e,K}} v_h \dd l   = 0, \quad \forall v_h\in V_h.
\end{equation}
Here $\widehat{f\cdot \nu_{e,K}}$ is the numerical flux, which can be computed from the exact or approximate Riemann solver defined at the cell interface. For example, we can choose the Lax--Friedrichs flux of the form
\begin{equation*}
	\widehat{f\cdot \nu_{e,K}} 
        = \hf\left(f(u_h^\inte)\cdot\nu_{e,K} + f(u_h^\ext)\cdot\nu_{e,K} - {\alpha_{e,K}}\left(u_h^\ext - u_h^\inte\right)\right), 
\end{equation*}
with $\alpha_{e,K} = \max \left|\partial_u f\cdot \nu_{e,K}\right|$. Here $u_h^\inte$ and $u_h^\ext$ are limits of $u_h$ along $e$ from the interior and exterior of the cell $K$.

We introduce the discrete operator $\dxDG f:V_h\to V_h$, defined by
\begin{equation}
	\int_K \dxDG f(u_h) v_h \dd x = -\int_K f(u_h)\cdot \nabla v_h \dd x + \sum_{e\in \partial K} \int_e \widehat{f\cdot \nu_{e,K}} v_h \dd l, \quad \forall v_h \in V_h.\label{eq:DG operator}
\end{equation}
Therefore the semi-discrete DG scheme \eqref{eq:DGsemiweak} can be rewritten in the strong form
\begin{equation}\label{eq:semiDG}
	\partial_t u_h + \dxDG f(u_h) = 0.
\end{equation}

Then we apply an explicit RK method to discretize \eqref{eq:semiDG} in time. Consider an explicit RK method associated with the Butcher Tableau
\begin{equation}\label{eq:butcher}
	\begin{array}{c|c}
		c&A\\\hline
		&b\\
	\end{array},\quad A = (a_{ij})_{s\times s}, \quad b = (b_1,\cdots, b_s),
\end{equation}
where $A$ is a lower triangular matrix in \eqref{eq:butcher}, namely, $a_{ij} = 0$ if $i>j$. The corresponding RKDG scheme is given by 
\begin{subequations}\label{eq:rkdg}
	\begin{align}
		u_h^{(i)} =&\, u_h^n -  \Delta t\sum_{j = 1}^{i-1}  a_{ij} \dxDG f\left(u_h^{(j)}\right), \quad  i = 1, 2, \cdots, s,\label{eq-rkdg1}\\
		u_h^{n+1} =&\, u_h^n - \Delta t \sum_{i = 1}^s b_i \dxDG  f\left(u_h^{(i)} \right).\label{eq:rkdg-2}
	\end{align}
\end{subequations}
Note we have $u_h^{(1)} = u_h^n$ for the explicit RK method. In the case that the problem is nonautonomous, for example, when a time-dependent source term $q(t)$ is included, $q(t+c_j \Delta t)$ should be included at appropriate places of the RK stages. 

\subsection{cRKDG schemes} 
Similarly to \eqref{eq:DG operator}, we define a local discrete spatial operator $\dxl f: V_h \to V_h$ such that 
\begin{equation}\label{eq:dxl}
	\int_K \dxl f\left(u_{h}\right) v_h \dd x=-\int_K f\left(u_{h}\right)\cdot \nabla v_h \dd x+\sum_{e\in \partial K} \int_e {f(u_h)\cdot \nu_{e,K}} v_h \dd l, \quad \forall v_h \in V_h.
\end{equation}
In other words, instead of using a numerical flux involving $u_h$ on both sides of the cell interfaces, the values of $u_h$ along $\partial K$ are taken from the interior of the cell $K$. Therefore it is a local operation defined within $K$ without couplings with the neighboring cells. It is easy to see that when all integrals in \eqref{eq:dxl} are computed exactly, $\dxl f$ indeed returns the projected local derivative $$
\dxl f\left(u_{h}\right)=\Pi\, \nabla \cdot f\left(u_{h}\right),
$$
where $\Pi$ is the $L^{2}$ projection to $V_{h}$.  

With $\dxl f$ defined above, we propose our new cRKDG scheme in the following Butcher Tableau form: 
\begin{subequations}\label{eq:crkdg}
	\begin{align}
		u_h^{(i)} =& u_h^n -  \Delta t\sum_{j = 1}^{i-1} a_{ij} \dxl f\left(u_h^{(j)}\right), \quad  i = 1,2,\cdots, s,\label{eq:crkdg-1}\\
		u_h^{n+1} =& u_h^n - \Delta t\sum_{i = 1}^{s} b_i \dxDG f\left(u_h^{(i)}\right).\label{eq:crkdg-last}
	\end{align}
\end{subequations}
The main difference with the original RKDG scheme \eqref{eq:rkdg} is to use the local operator $\dxl f$ instead of $\dxDG f$ when evaluating the inner stage values $u_h^{(i)}$ in \eqref{eq:crkdg-1}.

For optimal convergence, we will couple $(k+1)$th order RK method with $\mathcal{P}^{k}$ spatial elements. The resulted fully discrete scheme is $(k+1)$th order accurate, which is the same optimal rate as that of the original RKDG method. For clarity, we list second- and third-order cRKDG schemes below as examples. \\
\emph{Second-order scheme ($k = 1$).}
\begin{subequations}\label{eq:2}
	\begin{align}
		u_h^{(2)} =& u_h^n - \frac{\Delta t}{2} \dxl f\left(u_h^n\right),\\
		u_h^{n+1} =& u_h^n - \Delta t \dxDG f\left(u_h^{(2)}\right).
	\end{align}
\end{subequations}
\emph{Third-order scheme ($k = 2$).}
\begin{subequations}\label{eq:3}
	\begin{align}
		u_h^{(2)} =& u_h^n - \frac{1}{3} \Delta t\dxl f\left(u_h^n\right),\qquad
		u_h^{(3)} = u_h^n - \frac{2}{3}\Delta t \dxl f\left(u_h^{(2)}\right),\\
		u_h^{n+1} =& u_h^n - \Delta t \left(\frac{1}{4}\dxDG f\left(u_h^{n}\right) + \frac{3}{4}  \dxDG f\left(u_h^{(3)}\right)\right).
	\end{align}
\end{subequations}

\begin{remark}[RK methods in Butcher form]\label{rmk:butcher}
	For the original RKDG method, strong-stability-preserving Runge--Kutta (SSP-RK) methods are often adopted as time stepping methods. However, we address that the cRKDG method \eqref{eq:crkdg} has to be written based on the Butcher form of RK methods, in order to preserve local conservation and achieve optimal accuracy. We refer to  \Cref{sec:butcher-RK} and \cref{ex:1dremark} for further details. 

    Since the cRKDG method is not based on SSP-RK time stepping, one can choose from a larger class of RK schemes without worrying about order barriers brought by the SSP property \cite{gottlieb2011strong}. For example, the classical four-stage fourth-order RK method can be used for the fourth-order scheme. 
\end{remark}

\begin{remark}[Limiters]
	To suppress spurious oscillation near discontinuities, a minmod or WENO type limiter is often applied after the update of each inner stage value $u_h^{(i)}$ in the original RKDG method \eqref{eq:rkdg}. For the cRKDG method, the limiter can be applied at the end of each time step. This is similar to the limiting strategy for LWDG method in \cite{qiu2005discontinuous} and will not change the stencil size of the cRKDG scheme. But at the same time, it's worth noting that the total-variation boundedness property, guaranteed for the original RKDG method \cite{rkdg2}, may not hold in this case.
\end{remark}

\section{Properties of the cRKDG method}\label{sec:prop}
\subsection{Stencil size}
In the cRKDG method, all inner stages are discretized with a local operator only using the information on the cell $K$. As a result, the stencil of the cRKDG scheme is determined by that of the last stage \eqref{eq:crkdg-last} only, and its size is the same as the forward-Euler--DG scheme. For example, in the one-dimensional case with Lax--Friedrichs fluxes, regardless of the number of RK stages, the stencil size of the cRKDG scheme is identically 3. 

In contrast, the stencil size of the RKDG scheme grows with the number of RK inner stages. In the one-dimensional case, the DG operator with Lax--Friedrichs flux has the stencil size $3$. While after the temporal discretization with an $s$-stage RK method, the stencil size becomes $2s+1$.  
See \cref{fig1} for an example with the second-order RK method. The difference on the stencil sizes between the two methods could be more significant in multidimensions. 
 
\begin{figure}
 \resizebox{1\textwidth}{!}{
\begin{tikzpicture}
\node[anchor=west] at (0,0) {$u^{(1)}=u_h^n$};
\node[anchor=west] at (0,1) {$u_h^{(2)} = u_h^n - \frac{\Delta t}{2} \dxDG  f\left(u_h^n\right)$};
\node[anchor=west] at (0,2) {$u_h^{n+1} = u_h^n - \Delta t  \dxDG f\left(u_h^{(2)}\right)$};
	\begin{scope}[xshift=6.5cm]
	\stencilpt{-2.2,0}{-2=0}{$u_{h,j-2}^{n}$};
	\stencilpt{-1.1,0}{-1=0}{$u_{h,j-1}^{n}$};
		\stencilpt{-1.1,1}{-1=1}{$u_{h,j-1}^{(2)}$};
	\stencilpt{ 0,0}{0=0}  {$u_{h,j }^{n}$};
	\stencilpt{ 1.1,0}{1=0}{$u_{h,j+1}^{n}$};
	\stencilpt{ 1.1,1}{1=1}{$u_{h,j+1}^{(2)}$};
	\stencilpt{ 2.2,0}{2=0}{$u_{h,j+2}^{n}$};
	\stencilpt{0, 1}{0=1}{$u_{h,j }^{(2)}$};
	\stencilpt{0, 2}{0=2}{$u_{h,j}^{n+1}$};
	\draw 	 [->] (-2=0) edge (-1=1)
  (-1=0) edge (-1=1)
   (0=0) edge (-1=1)
    (-1=0) edge (0=1)
      (0=0) edge (0=1) 
      (1=0) edge (0=1)
        (0=0) edge (1=1)
        (1=0) edge (1=1)
          (2=0)  edge (1=1)
          (-1=1) edge(0=2)
              (0=1) edge (0=2)
                (1=1) edge (0=2)	;
                \end{scope}
            
            	\begin{scope}[xshift=9.5cm,yshift=0cm]
            		\node[anchor=west] at (0,0) {$u^{(1)}=u_h^n$};
\node[anchor=west] at (0,1) {$u_h^{(2)} = u_h^n - \frac{\Delta t}{2}  \dxl f\left(u_h^n\right)$};
\node[anchor=west] at (0,2) {$u_h^{n+1} = u_h^n - \Delta t  \dxDG  f\left(u_h^{(2)}\right)$};

  \end{scope}

\begin{scope}[xshift=16cm,yshift=0cm]
            	 
            	\stencilpt{-1.1,0}{-1=0}{$u_{h,j-1}^{n}$};
            	\stencilpt{-1.1,1}{-1=1}{$u_{h,j-1}^{(2)}$};
            	\stencilpt{ 0,0}{0=0}  {$u_{h,j }^{n}$};
            	\stencilpt{ 1.1,0}{1=0}{$u_{h,j+1}^{n}$};
            	\stencilpt{ 1.1,1}{1=1}{$u_{h,j+1}^{(2)}$};
            	\stencilpt{0, 1}{0=1}{$u_{h,j }^{(2)}$};
            	\stencilpt{0, 2}{0=2}{$u_{h,j}^{n+1}$};
            	\draw 	 [->] (-1=0) edge (-1=1)
            	(1=0) edge (1=1)            
            	(0=0) edge (0=1) 
            	(-1=1) edge(0=2)
            	(0=1) edge (0=2)
            	(1=1) edge (0=2)	;
            \end{scope} 
\end{tikzpicture}
}
\caption{Stencils of RKDG and cRKDG methods with a second-order RK method.  
}
    \label{fig1}
\end{figure}
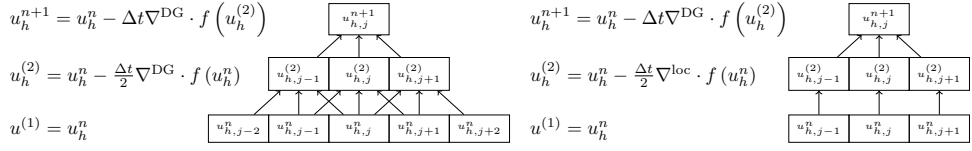

\begin{prop}
	The stencil of a cRKDG method of any temporal order only involves the current mesh cell and its immediate neighbors.
\end{prop}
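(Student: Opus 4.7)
The plan is to argue by induction on the Runge--Kutta stage index $i$ that each inner stage value $u_h^{(i)}\big|_K$ is determined solely by $u_h^n\big|_K$, and then to account separately for the single DG update at the final stage, which couples $K$ only with its immediate face neighbors. The base case $u_h^{(1)}=u_h^n$ is immediate.

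For the inductive step, assume the claim holds for stages $1,\dots,i-1$. By the defining identity \eqref{eq:dxl} of $\dxl f$, both the volume term and the boundary term use only values of $u_h^{(j)}$ in the closure of $K$, with the boundary trace being the interior trace from $K$; hence $\dxl f\bigl(u_h^{(j)}\bigr)\big|_K$ depends only on $u_h^{(j)}\big|_K$. Substituting into \eqref{eq:crkdg-1} and invoking the inductive hypothesis shows that $u_h^{(i)}\big|_K$ depends only on $u_h^n\big|_K$. For the final update \eqref{eq:crkdg-last}, the operator $\dxDG f$ defined in \eqref{eq:DG operator} uses numerical fluxes $\widehat{f\cdot \nu_{e,K}}$ involving both interior and exterior traces along each face $e\in\partial K$, so $\dxDG f\bigl(u_h^{(i)}\bigr)\big|_K$ is determined by $u_h^{(i)}$ on $K$ together with $u_h^{(i)}$ on every cell $K'$ sharing a face with $K$. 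Running the same local induction on each such neighbor $K'$ then implies that $u_h^{n+1}\big|_K$ depends only on $u_h^n$ restricted to $K$ and to its immediate face neighbors, independently of the number of stages $s$.

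The argument is essentially definitional, so the main obstacle is conceptual rather than computational: one must recognize that the Butcher-form presentation in \eqref{eq:crkdg-1} is exactly what preserves cell-locality of the inner stages. A Shu--Osher-style rewrite could mix combinations of stages produced by $\dxDG f$ back into the inner updates, breaking the locality of $\dxl f$ and thereby enlarging the stencil stage by stage; this is precisely the issue flagged in Remark~\ref{rmk:butcher}. The cleanness of the proof thus rests on the explicit separation between the local operator $\dxl f$ in \eqref{eq:crkdg-1} and the DG operator $\dxDG f$ reserved for \eqref{eq:crkdg-last}.
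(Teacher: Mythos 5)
Your induction is correct and is exactly the argument the paper gives: the inner stages use the local operator $\dxl f$ and hence never leave the cell $K$, so the stencil is determined entirely by the single DG update in the final stage, matching the forward-Euler--DG stencil. The paper presents this only as an informal observation preceding the proposition, so your write-up is simply a more formal version of the same reasoning, including the correct caveat about the Butcher versus Shu--Osher forms.
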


\subsection{Convergence}\label{sec:butcher-SSP} 
\subsubsection{RK methods in Butcher form} \label{sec:butcher-RK} 
SSP-RK methods are usually used for time discretization in the original RKDG method. For example, the widely used second-order SSP-RKDG method is given by
	\begin{align*}
		u_h^{(1)} =& u_h^n - \Delta t \dxDG f(u_h^n),\\
		u_h^{n+1} =& \hf u_h^{n} + \hf \left(u_h^{(1)} - \Delta t \dxDG f\left(u_h^{(1)}\right)\right).
	\end{align*}
As above, an SSP-RK method can be written as convex combinations of forward-Euler steps, which is also referred to as the \emph{Shu--Osher form} in the literature. The method can also be written in the equivalent \emph{Butcher form} as those in \eqref{eq:rkdg}. 

In the design of the cRKDG method, one cannot directly replace $\dxDG f$ with $\dxl f$ in the inner stages of an SSP-RKDG method in its Shu--Osher form, as this will cause accuracy reduction and violation of local conservation. For example, the following scheme is suboptimal and nonconservative: 
\begin{subequations}\label{eq:ssprk2}
	\begin{align}
		u_h^{(1)} =& u_h^n - \Delta t \dxl f(u_h^n),\label{eq:ssprk2-1}\\
		u_h^{n+1} =& \hf u_h^{n} + \hf \left(u_h^{(1)} - \Delta t \dxDG f\left(u_h^{(1)}\right)\right).\label{eq:ssprk2-2}
	\end{align}
\end{subequations}
This can be made clear by substituting \eqref{eq:ssprk2-1} into \eqref{eq:ssprk2-2}:
\begin{equation}
	u_h^{n+1} = u_h^{n} -\frac{\Delta t}{2} \left( \dxl f\left(u_h^n\right)+ \dxDG f\left(u_h^{(1)}\right)\right).
	\label{eq:ssprk2-wrong_scheme}
\end{equation}
One can see that a low-order and nonconservative spatial operator $\dxl f$ is used to update $u_h^{n+1}$, which will cause trouble. Indeed, if we consider its Butcher form, then the resulting cRKDG method can be written as 
\begin{subequations} \label{eq:ssprk2-correct_scheme}
	\begin{align}
		u_h^{(1)} =& u_h^n - \Delta t \dxl f(u_h^n),\label{eq:ssprk2-correct_scheme-1}\\
		u_h^{n+1} =& u_h^{n} -\frac{\Delta t}{2} \left( \dxDG f\left(u_h^n\right)+ \dxDG f\left(u_h^{(1)}\right)\right),\label{eq:ssprk2-correct_scheme-2}
	\end{align}
\end{subequations}
i.e., replacing the last step \eqref{eq:ssprk2-2} or \eqref{eq:ssprk2-wrong_scheme} by \eqref{eq:ssprk2-correct_scheme-2}. The resulting scheme will have optimal convergence and the provable local conservation property. Some numerical tests will be given in \cref{ex:1dremark}  to illustrate the different convergence rates of these methods. Note that \eqref{eq:ssprk2-correct_scheme} and \eqref{eq:2} are both second-order cRKDG schemes, although they are different as the associated RK methods are different. 

In general, for the RKDG method in Butcher form, we can prove that replacing inner stages with $\dxl f$ will retain local conservation property (see  \cref{thm:lwthm}), and the optimal convergence rate can still be observed numerically (see \cref{ex:1dremark}). 

\subsubsection{Convergence}
Although all inner stages of the cRKDG method \eqref{eq:crkdg-1} do not preserve local conservation, the fully discrete numerical scheme is still conservative as long as the update of $u_h^{n+1}$ in the last stage is discretized with a conservative method. Indeed, for explicit RK methods, the cRKDG scheme can be formally written as a one-step scheme after recursive substitutions. By taking $v_h = 1_{K}$, the characteristic function on $K$, in \eqref{eq:crkdg-last} and combining with \eqref{eq:DG operator}, we obtain
\begin{equation}\label{eq:avg}
	\bar{u}_{h,K}^{n+1} =\bar{u}_{h,K}^n - \frac{\Delta t}{|K|}\sum_{e\in \partial K} g_{e,K}(u_h^n),
\end{equation}
where the bar notation is used to represent the cell average, 
\begin{equation}\label{eq:g}
	g_{e,K}(u_h^n) = \int_e \left(\sum_{i=1}^s b_i\widehat{f\cdot \nu_{e,K}}\left(u_h^{(i)}\right)\right) \dd l
\end{equation}
and $u_h^{(i)}$ can be explicitly computed from \eqref{eq:crkdg-1}.
\eqref{eq:avg} is written in the conservative form. Following \cite{shi2018local}, we obtain a Lax--Wendroff type theorem in \cref{thm:lwthm}, with its detailed proof given in \cref{app:lwthm}. Here we consider scalar conservation laws for simplicity, and the results can also be extended to multidimensional systems. We will use $C$ for a generic constant that may depend on the polynomial order $k$, the stage number $s$, the bound of $u$ and $u_h$, etc., but is independent of $\Delta t$ and $h$. 
\begin{theorem}\label{thm:lwthm}
	Consider the cRKDG scheme \eqref{eq:crkdg} for the hyperbolic conservation law \eqref{eq:conservation law} on quasi-uniform meshes with the following assumptions:
	\begin{enumerate}
		\item $f$ is Lipschitz continuous and $f'$, $f''$ are uniformly bounded in $L^\infty$.
		\item The numerical flux $\widehat{f\cdot\nu_{e,K}}(u_h)$ has the following properties: 
		\begin{enumerate}
			\item Consistency: if $u_h = u_h^\inte = u_h^\ext$, then $\widehat{f\cdot\nu_{e,K}}(u_h)= {f(u_h)\cdot\nu_{e,K}}$.
			\item Lipschitz continuity: $|\widehat{f\cdot\nu_{e,K}}(u_h) -  \widehat{f\cdot\nu_{e,K}}(v_h)|\leq C\nm{u_h^n-v_h^n}_{L^\infty(B_K)}$, where $B_K = K\cup K^\ext$ is the union of $K$ and its neighboring cell $K^\ext$. 
		\end{enumerate}
		\item The CFL condition $\Delta t/h \leq C$ is satisfied.
	\end{enumerate}
	If $u_h^{n}$ converges boundedly almost everywhere to some function $u$ as $\Delta t, h \to 0$, then the limit $u$ is a weak solution to the conservation law \eqref{eq:conservation law}, namely
	\begin{equation}\label{eq:weak}
		\int_{\mathbb{R}^d}u_0 \phi\dd x + \int_{\mathbb{R}^d\times \mathbb{R}^+} u \phi_t \dd x \dd t+ \int_{\mathbb{R}^d\times \mathbb{R}^+}f(u) \cdot \nabla \phi \dd x\dd t= 0, \quad \forall  \phi\in C_0^\infty(\mathbb{R}^d\times\mathbb{R}^+).
	\end{equation} 
\end{theorem}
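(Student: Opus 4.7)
The plan is to follow the classical Lax--Wendroff paradigm based on the conservative cell-average identity \eqref{eq:avg}. Given a test function $\phi\in C_0^\infty(\mathbb{R}^d\times\mathbb{R}^+)$, I would choose a representative point $x_K$ in each cell $K$, multiply \eqref{eq:avg} by $|K|\phi(x_K,t^n)$, and sum over all $K\in\mathcal{T}_h$ and all $n\ge 0$. Abel summation in time transfers the forward difference $\bar{u}_{h,K}^{n+1}-\bar{u}_{h,K}^n$ onto $\phi$, producing a discrete approximation of $-\int u\phi_t$ together with an initial-time boundary term involving $u_h^0$. Rearranging the spatial edge sum using the single-valuedness $\widehat{f\cdot\nu_{e,K}}=-\widehat{f\cdot\nu_{e,K'}}$ converts $\sum_{e\in\partial K}$ into a sum over interior edges of $g_{e,K_L}(u_h^n)\bigl[\phi(x_{K_L},t^n)-\phi(x_{K_R},t^n)\bigr]$, which under the smoothness of $\phi$ approximates $-\int f(u)\cdot\nabla\phi$ in the limit.

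The first technical ingredient is a uniform $L^\infty$ stability bound on the inner stage values $u_h^{(i)}$. Since $\dxl f(u_h)$ is the local $L^2$ projection of $\nabla\cdot f(u_h)$ onto $V_h|_K$, an inverse inequality on the quasi-uniform mesh together with the Lipschitz bound on $f$ yields
\begin{equation*}
  \|\dxl f(u_h)\|_{L^\infty(K)}\le C h^{-1}\|u_h\|_{L^\infty(K)}.
\end{equation*}
Feeding this into \eqref{eq:crkdg-1} and inducting on the stage index $i$ under the CFL condition $\Delta t/h\le C$ gives $\|u_h^{(i)}\|_{L^\infty}\le C\|u_h^n\|_{L^\infty}$, with $C$ independent of $h$ and $\Delta t$. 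This uniform bound provides the dominating function required by Lebesgue's theorem in the subsequent passage to the limit.

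The heart of the proof is the identification of the limit of the edge flux
\begin{equation*}
  \frac{g_{e,K}(u_h^n)}{|e|}=\sum_{i=1}^s b_i\cdot\frac{1}{|e|}\int_e\widehat{f\cdot\nu_{e,K}}\bigl(u_h^{(i)}\bigr)\,\dd l.
\end{equation*}
By the order condition $\sum_i b_i=1$ and the consistency of $\widehat{f\cdot\nu_{e,K}}$, the natural limit is $f(u)\cdot\nu_{e,K}$. The main obstruction is that $u_h^{(i)}-u_h^n=-\Delta t\sum_{j<i}a_{ij}\dxl f(u_h^{(j)})$ is only of size $O(\Delta t/h)=O(1)$ in $L^\infty$ under the CFL condition, so pointwise closeness of the stage values is unavailable. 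The resolution, in the spirit of \cite{shi2018local}, is that when the flux difference $\widehat{f\cdot\nu_{e,K}}(u_h^{(i)})-\widehat{f\cdot\nu_{e,K}}(u_h^n)$ is multiplied by the $O(h)$ smoothness factor $\phi(x_{K_L},t^n)-\phi(x_{K_R},t^n)$ supplied by the summation by parts, the $h^{-1}$ growth hidden in $\dxl f$ is absorbed, and the total contribution over all edges and time levels is of order $\Delta t$, which vanishes in the limit. Together with the a.e.\ convergence $u_h^n\to u$, which extends to the neighboring-cell trace for a.e.\ $(x,t)$ since the limit $u$ is a measurable function that does not distinguish the two sides of $e$ almost everywhere, the dominated convergence theorem then identifies the limiting flux with $f(u)\cdot\nu_{e,K}$.

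Combining the three ingredients, the discrete identity converges to $\int u_0\phi\,\dd x+\int u\phi_t\,\dd x\,\dd t+\int f(u)\cdot\nabla\phi\,\dd x\,\dd t=0$, which is exactly the weak form \eqref{eq:weak}, with the initial-time boundary term handled using that $u_h^0$ is the $L^2$ projection of $u_0$. I expect the main technical challenge to lie in the careful bookkeeping of the flux correction, in particular making rigorous the heuristic $h^{-1}\cdot h=O(1)$ cancellation described above; this is typically carried out term-by-term in the Butcher expansion of each $u_h^{(i)}$ in powers of $\Delta t$, using the stage bound from the second step as a universal dominating function and leveraging the fact that the recursive structure of the explicit RK scheme makes every inner perturbation a polynomial in $\Delta t\,\dxl f$ acting on $u_h^n$.
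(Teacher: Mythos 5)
Your overall skeleton (the conservative cell-average identity, summation by parts against $\phi$, and the $L^\infty$ stage bound via an inverse estimate plus the CFL condition) is aligned with the paper, which packages the summation-by-parts machinery into a cited theorem of Shi and Shu and reduces everything to three properties of the combined flux $g_{e,K}$. But the heart of your argument --- the treatment of the flux correction --- contains a genuine quantitative error. You claim that pairing the stage perturbation $\widehat{f\cdot \nu_{e,K}}(u_h^{(i)})-\widehat{f\cdot \nu_{e,K}}(u_h^n)$, of size $O(\Delta t/h)$ in $L^\infty$, with the $O(h)$ increment $\phi(x_{K_L},t^n)-\phi(x_{K_R},t^n)$ makes the total correction of order $\Delta t$. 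Count again: each edge--time summand of the correction is of size $\Delta t\cdot |e|\cdot O(\Delta t/h)\cdot O(h)=O(\Delta t^2 h^{d-1})$, and there are $O(\Delta t^{-1}h^{-d})$ such summands in the support of $\phi$, so the total is $O(\Delta t/h)=O(1)$ under the CFL condition --- the same order as the main term, not $o(1)$. The $O(h)$ smoothness of $\phi$ is already consumed in making the main term a bounded Riemann sum; it cannot be spent a second time to kill the correction. Consequently your dominated-convergence identification of the limiting flux does not go through: $u_h^{(i)}$ does not converge pointwise a.e.\ to $u$, and no deterministic $O(\Delta t)$ bound rescues this.

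What actually closes the argument (and what the paper proves) is different in kind: one does not show the stage correction is small, but rather that the \emph{entire} combined flux $g_{e,K}$ is itself an admissible numerical flux as a function of $u_h^n|_{B_K}$ --- consistent for constant states (trivial, since $\dxl f$ annihilates constants, so all stages collapse to $u_h^n$ and $\sum_i b_i=1$ applies), anti-symmetric, and Lipschitz with constant $Ch^{d-1}$ \emph{uniformly in $h$ and $\Delta t$}. The Lipschitz property is the crux; it requires the difference estimate $\nm{u_h^{(i)}-v_h^{(i)}}_{L^\infty(B_K)}\le C\nm{u_h^n-v_h^n}_{L^\infty(B_K)}$ between two data sets (the paper's Lemma A.4), proved by induction using $\nm{\Pi\,\nabla\cdot(f(u_h)-f(v_h))}_{L^\infty}\le Ch^{-1}\nm{u_h-v_h}_{L^\infty}$, which in turn needs the $f''$ bound and the $L^\infty$ stability of $\Pi$. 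Your step two only establishes the boundedness special case ($v_h^n=0$), not this Lipschitz stability. Once these three properties are in hand, the Lax--Wendroff limit is identified by the classical mechanism --- the $L^1$-vanishing of the oscillation of $u_h^n$ over neighboring cells, which follows from bounded a.e.\ convergence via translation continuity of $L^1$ limits --- rather than by any $O(\Delta t)$ estimate. You would need to replace your third step entirely with this argument (or verify the hypotheses of the Shi--Shu theorem, as the paper does) for the proof to be correct.
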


\subsection{Accuracy}    

In the cRKDG scheme, we apply $\dxl f$ to approximate spatial derivatives of $f$ for all inner stages. However, $\dxl f$ may not have the same approximation property as $\dxDG f$. (For example, for $\mathcal{P}^0$ elements, $\dxl f(u_h) = \Pi\, \nabla \cdot f(u_h) \equiv 0$.) A natural question to ask is whether the cRKDG scheme will still admit the optimal convergence rate in both space and time.  

In \cite{grant2022perturbed}, Grant analyzed perturbed RK schemes with mixed precision. It is pointed out that replacing certain stages in an RK scheme with a low-precision approximation may not affect the overall accuracy of the solver. The order conditions of the methods were systematically studied there. In particular, the work by Grant implies the following results. 

\begin{theorem}[Grant, 2022. \cite{grant2022perturbed}]\label{thm:grant}
	Let $F^\varepsilon(u) = F(u) + \mathcal{O}(\varepsilon)$ be a low-precision perturbation of $F(u)$. Consider a $p$th order RK method for solving the ordinary differential equation $u_t = F(u)$:
	\begin{subequations}\label{eq:ode-zg}
		\begin{align}
			u^{(i)} =& u^n +  \sum_{j = 1}^s a_{ij}  F^\varepsilon\left(u_h^{(j)}\right), \quad  i = 1,2,\cdots, s,\\
			u^{n+1} =& u^n + \sum_{j = 1}^s b_j F\left(u_h^{(j)} \right).
		\end{align}
	\end{subequations}
	The local truncation error of the scheme \eqref{eq:ode-zg} is $ \mathcal{O}\left(\Delta t^{p+1}\right) + \mathcal{O}(\varepsilon \Delta t^2).$
\end{theorem}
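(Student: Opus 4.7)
The plan is to reduce the claim to comparing the perturbed scheme with the corresponding unperturbed $p$th order RK method applied to $u_t = F(u)$. Let $\tilde{u}^{(j)}$ and $\tilde{u}^{n+1}$ denote the stage values and update obtained by replacing every $F^\varepsilon$ in \eqref{eq:ode-zg} with $F$. With $u^n$ taken as the exact solution at time $t^n$, the standard RK order theory immediately gives $\tilde{u}^{n+1} - u(t^{n+1}) = \mathcal{O}(\Delta t^{p+1})$, so by the triangle inequality it suffices to show $u^{n+1} - \tilde{u}^{n+1} = \mathcal{O}(\varepsilon \Delta t^2)$.

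The crux is an induction on the stage index $j$ to show that the intermediate discrepancies are already one order better than one might guess, namely $u^{(j)} - \tilde{u}^{(j)} = \mathcal{O}(\varepsilon \Delta t)$. The base case $j=1$ is trivial since both equal $u^n$. For the inductive step I would write
\begin{equation*}
u^{(j)} - \tilde{u}^{(j)} = \Delta t \sum_{i<j} a_{ji}\Bigl(F^\varepsilon(u^{(i)}) - F(\tilde{u}^{(i)})\Bigr) = \Delta t \sum_{i<j} a_{ji}\Bigl((F^\varepsilon - F)(u^{(i)}) + F(u^{(i)}) - F(\tilde{u}^{(i)})\Bigr),
\end{equation*}
bound the first term by $\mathcal{O}(\varepsilon \Delta t)$ using the definition of the perturbation, and the second by $\mathcal{O}(\varepsilon \Delta t^2)$ using the local Lipschitz continuity of $F$ together with the inductive hypothesis. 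Adding these contributions preserves the $\mathcal{O}(\varepsilon \Delta t)$ scaling.

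Because the final update uses the exact $F$ rather than $F^\varepsilon$, no direct $\mathcal{O}(\varepsilon)$ term appears at the outermost level, and I can write
\begin{equation*}
u^{n+1} - \tilde{u}^{n+1} = \Delta t \sum_{j=1}^{s} b_j\Bigl(F(u^{(j)}) - F(\tilde{u}^{(j)})\Bigr).
\end{equation*}
Applying Lipschitz continuity of $F$ and the inner-stage bound $\mathcal{O}(\varepsilon \Delta t)$ then yields the desired $\mathcal{O}(\varepsilon \Delta t^2)$, completing the proof.

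The main thing to watch is the bookkeeping that isolates the additional factor of $\Delta t$: the $\mathcal{O}(\varepsilon)$ perturbation enters only through $F^\varepsilon$ in inner stages, where it is multiplied by $\Delta t$; the resulting $\mathcal{O}(\varepsilon \Delta t)$ stage errors are then propagated by the Lipschitz $F$ into the final update, where another $\Delta t$ appears in front. If the final stage used $F^\varepsilon$ instead, the bound would degrade to $\mathcal{O}(\varepsilon \Delta t)$, losing precisely the gain that the theorem is about. Apart from this structural observation, the argument reduces to routine Taylor expansion and induction, so I do not anticipate any substantial analytical obstacle.
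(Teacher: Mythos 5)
Your argument is correct, and it is worth noting that the paper itself gives no proof of this statement: \cref{thm:grant} is quoted from Grant's work \cite{grant2022perturbed}, so there is no in-paper proof to compare against. Your decomposition --- splitting off the unperturbed RK update $\tilde{u}^{n+1}$ (which carries the $\mathcal{O}(\Delta t^{p+1})$ consistency error by standard order theory) and then tracking the stage discrepancies $u^{(j)}-\tilde{u}^{(j)}=\mathcal{O}(\varepsilon\Delta t)$ by induction, with the final Lipschitz update contributing the extra factor of $\Delta t$ --- is exactly the right mechanism, and your closing observation about why the final stage must use $F$ rather than $F^\varepsilon$ identifies the structural point that makes the cRKDG construction work. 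Two minor bookkeeping remarks: the displayed scheme \eqref{eq:ode-zg} omits the factor $\Delta t$ in front of the stage sums (you correctly reinstate it), and your induction over $i<j$ implicitly assumes the method is explicit; for the sums over all $j=1,\dots,s$ as literally written one would need a small-$\Delta t$ fixed-point argument to get the same stage bounds, but in the context of this paper (strictly lower triangular $A$) your version suffices.
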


The numerical error of the cRKDG scheme \eqref{eq:crkdg} can be analyzed by considering $F = \dxDG f$ and $F^\varepsilon = \dxl f$ in \cref{thm:grant}. Although $\dxl f$ could be low-order accurate, it may yield $\nm{\dxDG f(u_h) - \dxl f(u_h)} \leq C h^{k}$. Hence from \cref{thm:grant}, we expect that the local truncation error after one step is $\mathcal{O}(\Delta t^{p+1} + h^k \Delta t^2+h^{k+1}\Delta t)$. Under the standard CFL condition for hyperbolic conservation laws, we have $\Delta t\leq C h$, hence the local truncation error is $\mathcal{O}(\Delta t^{\min(p+1,k+2)})$. Hence a heuristic global error estimate is $\nm{u_h^n(\cdot) - u(\cdot,t^n)} = \mathcal{O}(\Delta t^{\min(p,k+1)})$, which is the same as the original RKDG scheme. 

However, the above argument is far from rigorous error estimates. For example, estimates in \cref{thm:grant} rely on the derivatives of $F$ and $F-F^\varepsilon$, which are not defined for $\dxDG f$ and $\dxDG f- \dxl f$. A detailed fully discrete error analysis using an energy type argument is still needed and is postponed to our future work. 

\subsection{Boundary Error} 
When the nonhomogeneous Dirichlet boundary condition is used, it is known that, if one directly uses the exact inflow data for RK inner stages, the RKDG method for hyperbolic conservation laws may suffer order degeneration of the accuracy \cite{zhang2011third}. This order degeneration is not specific to DG schemes but can also arise with other spatial discretization methods \cite{carpenter1995theoretical}. 

The reason for such order deductions relates to the fact that some RK stages are designed to be of low stage order, which means that they should be low-order approximations of the true solutions at the corresponding stages. These low-order stages are combined in a subtle way with the coefficients in the Butcher Tableau so that they can build up a high-order accurate solution at $t^{n+1}$. If we replace the low-order RK stage with the exact boundary data, we also break the subtle cancellation of the error terms, which will lead to a low-order accurate approximation at $t^{n+1}$. 

Compared with the RKDG method, the cRKDG method uses the local operator to approximate the values at the inner stages. By doing so, it does not need any exterior information and will avoid introducing the boundary data in the update of the inner stage values. Therefore, the boundary condition is only needed in the last stage to update $u_h^{n+1}$, and this will automatically maintain the optimal convergence rate of the cRKDG method. A detailed analysis involving the nonhomogeneous boundary condition will be provided in future work.  

\subsection{Connections with other DG methods}
\subsubsection{Equivalence with Lax--Wendroff DG in special cases}
DG methods with Lax--Wendroff type time discretization were studied in \cite{qiu2005discontinuous}. The main idea there was to consider the high-order temporal Taylor expansion 
\begin{equation}\label{eq:taylor}
	u(x, t+\Delta t) = u(x, t) + \Delta t u_t (x,t) + \frac{\Delta t^2}{2!} u_{tt}(x,t) + \frac{\Delta t^3}{3!} u_{ttt}(x,t) + \cdots
\end{equation}
and then apply the Lax--Wendroff procedure (or the so-called Cauchy–Kowalewski procedure) to convert all temporal derivatives to spatial derivatives. For example, for the one-dimensional problem with a third-order expansion, this gives
\begin{equation*}
	u^{n+1} = u^n - \Delta t F(u)_x, 
\end{equation*}
where
\begin{equation*}
	\begin{aligned}
		&F(u) = f(u) + \frac{\Delta t}{2}f'(u) u_t + \frac{\Delta t^2}{6}\left(f''(u)(u_t)^2 + f'(u) u_{tt}\right) + \cdots,\\
		&u_t = -f(u)_x, \, u_{tt} = -\left(f'(u) u_t\right)_x, \, u_{ttt} = -\left(f''(u)(u_t)^2 + f'(u) u_{tt}\right)_x, \quad \text{etc.}. 
	\end{aligned}
\end{equation*}
The LWDG method \cite{qiu2005discontinuous} is then given as
\begin{equation*}
	u_h^{n+1} = u_h^n - \Delta t \dxDG F\left(u_h^n\right)
\end{equation*}
with a suitable choice of the numerical flux $\widehat{F\cdot \nu_{e,K}}$.

Although for a generic nonlinear problem, the RK methods suffer the so-called order barrier and we typically need $s>p$ for high-order RK methods. While for linear autonomous problems, it is possible to construct a $p$-stage and $p$th order RK method \cite{gottlieb2001strong}. The following theorem states that  for linear conservation laws with constant coefficients and with a certain choice of the numerical flux, the LWDG method is the same as the cRKDG method with such $p$-stage and $p$th order RK method. 
\begin{theorem}
\label{equivalence}
	Consider linear conservation laws with constant coefficients.  
Suppose $\dxDG f$ is linear in the sense that 
	\begin{equation*}
	    \dxDG f\left(\sum_{j=1}^s b_j u_h^{(j)}\right) = \sum_{j=1}^s b_j \dxDG f\left(u_h^{(j)}\right).
	\end{equation*} Then when a specific numerical flux is chosen, as indicated in \eqref{eq:sameflux}, the LWDG method with $p$th temporal order is equivalent to a cRKDG method using an explicit RK method of $p$th order with $p$ stages. 
\end{theorem}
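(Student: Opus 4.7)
The plan is to reduce both schemes to the same one-step formula in a single linear operator acting on $u_h^n$, and then identify their numerical fluxes. The starting observation is that for a linear flux $f$ with constant coefficients, $\nabla\cdot f(u_h)$ already lies in $V_h$ whenever $u_h\in V_h$ (the polynomial degree drops by one), so the $L^2$ projection in \eqref{eq:dxl} acts as the identity and $\dxl f$ coincides with $\nabla\cdot f$ on $V_h$. Writing $L:=-\dxl f$, the inner cRKDG recursion \eqref{eq:crkdg-1} takes the linear form $u_h^{(i)}=u_h^n+\Delta t\sum_{j<i}a_{ij}\,L\,u_h^{(j)}$, and induction on $i$ yields $u_h^{(i)}=Q_i(\Delta t\,L)\,u_h^n$ for polynomials $Q_i$ depending only on the Butcher tableau.

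Next I would invoke the classical fact that the stability function of any $p$-stage, $p$th-order explicit Runge--Kutta method equals the truncated exponential $R(z)=\sum_{m=0}^{p}z^m/m!$. Applied to the scalar test problem $y'=\lambda y$ this gives the polynomial identity $\sum_{i=1}^{p}b_i\,Q_i(z)=\sum_{m=0}^{p-1}z^m/(m+1)!$, which lifts via operator substitution on the finite-dimensional space $V_h$ to
\begin{equation*}
\sum_{i=1}^{p}b_i\,u_h^{(i)}=\sum_{m=0}^{p-1}\frac{(\Delta t\,L)^m}{(m+1)!}\,u_h^n.
\end{equation*}
The assumed linearity of $\dxDG f$ then lets me pull the sum inside the final stage \eqref{eq:crkdg-last}, producing
\begin{equation*}
u_h^{n+1}=u_h^n-\Delta t\,\dxDG f\!\left(\sum_{i=1}^{p}b_i\,u_h^{(i)}\right).
\end{equation*}

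To finish, I would carry out the Cauchy--Kowalewski substitution $\partial_t^m u=L^m u$, valid in the linear constant-coefficient setting, in the series defining the LWDG modified flux. Truncated at temporal order $p$ it collapses to $F(u_h^n)=f\!\left(\sum_{m=0}^{p-1}(\Delta t\,L)^m/(m+1)!\,u_h^n\right)=f\!\left(\sum_{i}b_i\,u_h^{(i)}\right)$, so the volume term of $\dxDG F(u_h^n)$ automatically coincides with that of $\dxDG f(\sum_i b_i u_h^{(i)})$. The two schemes therefore agree once the interface contribution is fixed by the specific choice $\widehat{F\cdot\nu_{e,K}}(u_h^n):=\widehat{f\cdot\nu_{e,K}}\!\left(\sum_i b_i\,u_h^{(i)}\right)$ codified in \eqref{eq:sameflux}. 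The main obstacle I anticipate is the Cauchy--Kowalewski bookkeeping: writing out $F$ cleanly term by term, verifying that after the substitution all contributions collapse into $f$ applied to the truncated Taylor polynomial, and matching the flux convention in the interface part of $\dxDG F$. The operator-substitution step itself is routine for linear operators on $V_h$, but deserves a brief explicit justification that a polynomial identity in a scalar indeterminate transfers to an identity of linear operators on a finite-dimensional space.
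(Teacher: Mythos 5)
Your proposal is correct and follows essentially the same route as the paper: reduce both schemes to the one-step form $u_h^{n+1}=u_h^n-\Delta t\,\dxDG f\bigl(\sum_{m=0}^{p-1}\tfrac{(\Delta t L)^m}{(m+1)!}u_h^n\bigr)$, and then match the interface terms via \eqref{eq:sameflux}. The only difference is cosmetic: where the paper derives the coefficient identity $\sum_{i=l+1}^{p}b_i d_{il}=1/(l+1)!$ by matching Taylor series, you invoke the classical fact that the stability function of a $p$-stage, $p$th-order explicit RK method is the truncated exponential --- the same identity with a cleaner off-the-shelf justification.
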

\begin{proof}
	For ease of notation, we will only consider the scalar conservation law $\partial_t u + \nabla\cdot(\beta u) = 0$, where $\beta$ is a constant vector. But the argument can be similarly generalized to multidimensional linear systems with constant coefficients. We will also denote by $\dxDG f(v_h) = \dxDG (\beta v_h)$ and $\dxl f(v_h) = \dxl (\beta v_h)$ for any $v \in V_h$.
	
	We will show that both schemes can be written in the following form. 
	\begin{equation}\label{eq:linear}
		u_h^{n+1} = u_h^n - \Delta t \dxDG\left(\beta \sum_{i = 0}^{p-1} \frac{\Delta t^{i}}{(i+1)!} L_h^{i} u_h^n\right),\quad \text{with} \quad L_h u_h^n := -\dxl (\beta u_h^n). 
	\end{equation}
	
	Define $L u = -\nabla \cdot (\beta u)$. Using the conservation law, we have
	\begin{equation*}
		u_t = - \nabla \cdot (\beta u) = Lu \quand u_{tt} = - \nabla \cdot \left(\beta u_t\right) =- \nabla \cdot \left(\beta L u\right).
	\end{equation*}
	Using a recursive argument, we have $\partial_t^j u = -\nabla \cdot \left(\beta L^{j-1}u\right)$, for all $j = 0,1,\cdots, p$. Substituting it into the Taylor expansion \eqref{eq:taylor}, we have
	\begin{equation*}
		u^{n+1} = u^n - \Delta t \nabla \cdot F (u^n), \quad F(u^n) = \beta \sum_{i = 1}^p \frac{\Delta t^{i-1}}{i!}L^{i-1} u^n.
	\end{equation*}
	In the LWDG method, we take the solution to be from $V_h$ and approximate the outer spatial derivative $\nabla \cdot F$ with the DG operator $\dxDG F$ to obtain
	\begin{equation}\label{eq:lwdg-1}
		u_h^{n+1} = u_h^n - \Delta t \dxDG F (u_h^n), \quad F(u_h^n) = \beta \sum_{i = 0}^{p-1} \frac{\Delta t^{i}}{(i+1)!} L^{i} u_h^n,
	\end{equation}
 after changing the summation index in the definition of $F$.
	Finally, note that when $\beta$ is constant, we have $Lu_h^n = -\nabla \cdot (\beta u_h^n) = \Pi (-\nabla \cdot (\beta u_h^n)) = -\dxl (\beta u_h^n) := L_h u_h^n$. Hence the LWDG scheme \eqref{eq:lwdg-1} can be written in the form of \eqref{eq:linear}.
	
	Now consider the cRKDG method with explicit time stepping \eqref{eq:crkdg}. Using the linearity of $\dxDG f$, we have
	\begin{equation}\label{eq:rkdg-proof1}
		u_h^{n+1} = u_h^n - \Delta t\dxDG \left(\beta \sum_{i = 1}^p b_i  u_h^{(i)}\right).
	\end{equation}  
	For explicit methods, we can use forward substitution to obtain
	\begin{equation}\label{eq:uhj}
		u_h^{(i)} = \sum_{l = 0}^{i-1} d_{il} \left(\Delta t L_h\right)^l u_h^n
	\end{equation}
 from \eqref{eq:crkdg-1},	where $d_{il}$ is defined recursively as $d_{i0} = 1$ and $d_{il} = \sum_{m=l}^{i-1} a_{im}d_{m,l-1}$.
	Substituting \eqref{eq:uhj} into \eqref{eq:rkdg-proof1} and exchanging the summation indices, we have 
 \begin{equation}\label{eq:rkdg-proof2}
	\begin{aligned}
		u_h^{n+1} &= u_h^n - \Delta t\dxDG \left( \beta \sum_{i = 1}^p\sum_{l = 0}^{i-1}b_id_{il}\left(\Delta t L_h\right)^l u_h^n\right) \\
        &= u_h^n - \Delta t\dxDG \left( \beta \sum_{l = 0}^{p-1}\left(\sum_{i = l+1}^pb_id_{il}\right)\left(\Delta t L_h\right)^l u_h^n\right). 
	\end{aligned}
 \end{equation}
	Note that both $-\dxDG(\beta v_h)$ and $L_h v_h$ are approximations to $\partial_t v_h$. To achieve $p$th order temporal accuracy, we need \eqref{eq:rkdg-proof2} to match the Taylor series after replacing both spatial operators to $\partial_t$. Hence $\sum_{i = l+1}^pb_id_{il} = 1/(l+1)!$. Substituting it into \eqref{eq:rkdg-proof2} gives \eqref{eq:linear}.
    
    Finally, we consider the numerical fluxes of the LWDG method and the cRKDG method. The numerical flux for the LWDG method is $\widehat{F\cdot \nu_{e,K}}(u_h^n)$. For the cRKDG method, from \eqref{eq:rkdg-2}, one can see that the numerical flux in the update of $u_h^{n+1}$ is $\sum_{j = 1}^s b_j \widehat{f\cdot \nu_{e,K}}(u_h^{(j)})$. Therefore, the LWDG method and the cRKDG method are equivalent if we have 
    \begin{equation}\label{eq:sameflux}
        \widehat{F\cdot \nu_{e,K}}\left(u_h^n\right) = \sum_{j = 1}^s b_j \widehat{f\cdot \nu_{e,K}}\left(u_h^{(j)}\right).
    \end{equation}
\end{proof}

\begin{remark}\label{remark:LWDGflux}
For the LWDG method studied in \cite{qiu2005discontinuous}, the authors adopted the Lax--Friedrichs flux
\begin{equation}\label{eq:LWDG:flux}
	\widehat{F\cdot \nu_{e,K}} 
        = \hf\left(F(u_h^\inte)\cdot\nu_{e,K} + F(u_h^\ext)\cdot\nu_{e,K} - {\alpha_{e,K}}\left(u_h^\ext - u_h^\inte\right)\right), 
\end{equation}
where $\alpha_{e,K} = \max|\partial_u f\cdot \nu_{e,K}|$.
Note that the resulted scheme will be different from the cRKDG method with the Lax--Friedrichs flux. This is because the cRKDG method will include the jump terms of the inner stages $u_h^{(j)}$, $j=1,\cdots,s$, but the LWDG method will only include the jump terms of $u_h^n$, which leads to different numerical viscosity used in these numerical fluxes (hence, \eqref{eq:sameflux} is not satisfied). For the linear scalar case, the Lax--Friedrichs flux for the cRKDG method retrieves the upwind flux, whereas the Lax--Friedrichs flux for the LWDG method \eqref{eq:LWDG:flux} does not retrieve the upwind flux. However, if we also choose the upwind flux for $\widehat{F\cdot \nu_{e,K}}$ in the LWDG method, the resulting scheme will be identical to the cRKDG method with the upwind flux (or, equivalently, the Lax--Friedrichs flux). From \cref{equivalence} and the discussion here, one can see that the cRKDG method is closely related to the LWDG method in the linear case, which has been carefully investigated in \cite{qiu2005discontinuous}.
\end{remark}

\begin{remark}
     The maximum CFL numbers of the cRKDG method and the LWDG method in \cite{qiu2005discontinuous}, in comparison to those of the RKDG method, are given in \cref{tab:CFL}. The CFL numbers of the LWDG method (with Lax--Friedrichs flux) are about 60\% to 67\% of those of the corresponding RKDG method. While the cRKDG method achieves the same CFL number as the RKDG method for the second-order case, its CFL number for the third-order case is about 85\% of that of the RKDG method. The difference in the CFL numbers of the cRKDG method and the LWDG method is indeed caused by the difference in the numerical fluxes. 
\end{remark}

\begin{remark}
	For nonlinear problems, the LWDG and the cRKDG methods are completely different. The LWDG method requires to precompute the Jacobian and the high-order derivatives of the flux function, which can be very cumbersome in the implementation of the multidimensional systems. However, the cRKDG method does not involve such complications and can be programmed as that of the original RKDG method. 
\end{remark}

\begin{remark}
	A ``new" LWDG method was proposed in \cite{guo2015new} and then analyzed in \cite{sun2017stability}. Note this method is different from the LWDG method in \cite{qiu2005discontinuous} in the linear case and is also different from the cRKDG method. 
\end{remark}

\subsubsection{Connections with the ADER-DG schemes with a local predictor}
$\,$
The ADER approach was proposed in \cite{toro2001towards,titarev2002ader} as a high-order extension of the classical Godunov scheme. Then the methods have been extended in different ways and under both the finite volume and the DG framework. Here we consider one version of the ADER-DG scheme with a local predictor in \cite{dumbser2008unified,dumbser2008finite}. 

Multiply the equation \eqref{eq:conservation law} with a test function $v_h\in V_h$ over a spacetime element $K \times [t^n,t^{n+1}]$. Replacing $u$ with $u_h \in V_h$ yields
\begin{equation*}
	\int_{t^n}^{t^{n+1}} \int_K \partial_t u_h v_h \dd x \dd t + \int_{t^n}^{t^{n+1}} \int_K \nabla \cdot f(u_h) v_h \dd x \dd t = 0.
\end{equation*}
Note $v_h = v_h(x)$ is independent of $t$. Then we apply Newton-Lebniz rule to $\partial_t$ and perform integration by parts for $\nabla \cdot$, which yields
\begin{equation*}
	\int_K \left(u_h^{n+1} - u_h^{n}\right) v_h \dd x  - \int_{t^n}^{t^{n+1}} \int_K f(u_h) \cdot \nabla v_h \dd x \dd t + \int_{t^n}^{t^{n+1}} \int_{\partial K} f(u_h)\cdot \nu_{e,K} v_h\dd l \dd t = 0.
\end{equation*}
The ADER-DG method with a local predictor can be obtained by replacing $u_h$ in the spatial flux with a local prediction $q_h$ and then replacing the cell interface term with the numerical flux. To be more specific, letting $\mathcal{P}_{K,n}^{k,s}:=\mathcal{P}^k(K)\times \mathcal{P}^s\left([t^n,t^{n+1}]\right)$ be the polynomial space on the spacetime element $K\times[t^n,t^{n+1}]$, the ADER-DG method is written as the following: find $u_h \in V_h$ such that
\begin{equation}\label{eq-ader1}
	\int_K \left(u_h^{n+1} - u_h^{n}\right) v_h \dd x  - \int_{t^n}^{t^{n+1}}\hspace*{-2mm}\int_K f(q_h) \cdot \nabla v_h \dd x \dd t + \int_{t^n}^{t^{n+1}}\hspace*{-2mm}\sum_{e\in \partial K} \int_{e} \widehat{f\cdot\nu_{e,K}}(q_h) v_h\dd l \dd t = 0
\end{equation}
holds for all $v_h \in V_h$, where on each spacetime element $K\times [t^n,t^{n+1}]$, $q_h = q_h(x,t)\in \mathcal{P}_{K,n}^{k,s}$ is a local predictor obtained through a local spacetime Galerkin method
\begin{equation}\label{eq-ader2}
	\int_{t^n}^{t^{n+1}}\int_{K} \left(\partial_t q_h +  \nabla\cdot f(q_h)\right) w_h \dd x \dd t = 0, \quad \forall w_h = w_h(x,t) \in  \mathcal{P}_{K,n}^{k,s},
\end{equation}
with $q_h(x,t^n)=u_h^n(x)$. Note \eqref{eq-ader2} can then be converted into a system of nonlinear equations on each element $K$ and solved locally. 
By adopting the notations in \Cref{sec:construction}, we can write the ADER-DG scheme \eqref{eq-ader1} with the local predictor \eqref{eq-ader2} into the following strong form:
\begin{subequations}\label{eq:aderdg-strong}
	\begin{align}
		u_h^{n+1} =& u_h^n - \int_{t^n}^{t^{n+1}} \dxDG f(q_h) \dd t,\\
		\partial_t q_h =& - \Pi_{\mathrm{st}} \nabla \cdot f \left(q_h\right), \label{eq:aderdg-strong-2}    
	\end{align}
\end{subequations}
where $u_h^n, u_h^{n+1} \in V_h$, $q_h \in \mathcal{P}_{K,n}^{k,s}$, and $\Pi_{\mathrm{st}}$ is the local $L^2$ projection to the spacetime polynomial space $\mathcal{P}_{K,n}^{k,s}$.

To see the connection between \eqref{eq:aderdg-strong} and the cRKDG scheme \eqref{eq:crkdg}, we rewrite both equations in \eqref{eq:aderdg-strong} in the integral form, which yield
\begin{equation}\label{eq:aderdg-strong-int}
	u_h^{n+1} = u_h^n - \int_{t^n}^{t^{n+1}} \dxDG f(q_h) \dd t \quand
	q_h^{n+1} = q_h^n - \int_{t^n}^{t^{n+1}} \Pi_{\mathrm{st}}\nabla \cdot f \left(q_h\right)\dd t.
\end{equation}
Next, we apply a collocation method with $s$ points to integrate \eqref{eq:aderdg-strong-int}. Suppose this method is associated with the Butcher Tableau \eqref{eq:butcher}. Then with $q_h^n = u_h^n$, we get (the equations for $q_h^{n+1}$ and $u_h^{(i)}$ are omitted)
\begin{subequations}\label{eq:aderdg-col}
	\begin{align}
		q_h^{(i)} =&\, u_h^n -  \Delta t\sum_{j = 1}^{s} a_{ij}\left(\Pi_{\mathrm{st}}\nabla\cdot f\right)^{(j)}, \quad  i = 1,2,\cdots, s,\label{eq:aderdg-1}\\
		u_h^{n+1} =&\, u_h^n - \Delta t\sum_{i = 1}^{s} b_i \dxDG f\left(q_h^{(i)}\right),\label{eq:aderdg-last}
	\end{align}
\end{subequations}
where 
\begin{equation*}
	\left(\Pi_{\mathrm{st}}\nabla\cdot f\right)^{(j)} = \left(\Pi_{\mathrm{st}}\nabla\cdot f\left(\sum_{l = 1}^s q_h^{(l)}(x)b_l(t)\right)\right)\bigg|_{t = t+c_j\Delta t},
\end{equation*}
and $\{b_l(t)\}_{l=1}^{s}$ are the Lagrange basis functions associated with the collocation points.

Comparing \eqref{eq:aderdg-col} with \eqref{eq:crkdg}, we can see that after applying a collocation type approximation, the ADER-DG method with a local predictor can be written in a similar form as that of the cRKDG method. The main difference between the two methods comes from the definition of the local operator. In the ADER-DG method, the local operator involves the spacetime projection $K\times [t^n,t^{n+1}]$, which couples all $s$ stages in each of the terms of $\left(\Pi_{\mathrm{st}}\nabla\cdot f\right)^{(j)}$, hence an implicit approach is needed to solve for $q_h^{(i)}$. While in the cRKDG method, the local operator only involves projection in space and hence is decoupled among different temporal stages -- only the term $q_h^{(j)}$ appears in $\dxl f\left(q_h^{(j)}\right)$.

In light of the above analysis, we have the following theorem. 
\begin{theorem}
	Suppose we replace $\Pi_{\mathrm{st}}$ with the spatial only projection $\Pi$ and apply a collocation method with $s$ points to integrate \eqref{eq:aderdg-strong-int}. Then the ADER-DG method with a $\mathcal{P}_{K,n}^{k,s}$ local predictor \eqref{eq:aderdg-strong} becomes a cRKDG method associated with a collocation type RK method with $s$ stages. 
\end{theorem}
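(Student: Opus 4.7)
The plan is to mirror the derivation leading to \eqref{eq:aderdg-col} but with $\Pi$ in place of $\Pi_{\mathrm{st}}$. The key observation is that switching from the spacetime to the spatial projection decouples the stage equations in time, converting the implicit spacetime Galerkin predictor into an explicit (pointwise-in-time) update in terms of $\dxl f$. Once this decoupling is established, the standard collocation-to-Butcher correspondence yields the cRKDG form \eqref{eq:crkdg}.

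The first step is to replace $\Pi_{\mathrm{st}}$ by $\Pi$ in \eqref{eq:aderdg-strong-2} and rewrite the predictor in integral form. Because $\Pi$ is the $L^2$ projection onto $V_h$ and acts pointwise in $t$, the identity $\dxl f(u_h) = \Pi\,\nabla \cdot f(u_h)$ noted after \eqref{eq:dxl} implies $\Pi\,\nabla \cdot f(q_h(\cdot,t)) = \dxl f(q_h(\cdot,t))$ for each fixed $t$. Thus the modified ADER-DG integral system becomes
\begin{align*}
u_h^{n+1} &= u_h^n - \int_{t^n}^{t^{n+1}} \dxDG f(q_h(\cdot,t))\,\dd t, \\
q_h(\cdot, t) &= u_h^n - \int_{t^n}^{t} \dxl f(q_h(\cdot, \tau))\,\dd \tau.
\end{align*}
Unlike in the $\Pi_{\mathrm{st}}$ case, the right-hand side of the predictor involves only the value $q_h(\cdot,\tau)$ at the single time $\tau$ rather than the whole spacetime polynomial, which is precisely the decoupling we need.

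Next, applying an $s$-point collocation method with nodes $\{c_i\}_{i=1}^{s}$, writing $q_h^{(i)} := q_h(\cdot, t^n + c_i \Delta t)$, and using the standard fact that the associated collocation RK weights satisfy $a_{ij} = \int_0^{c_i} L_j(\tau)\,\dd\tau$ and $b_i = \int_0^{1} L_i(\tau)\,\dd\tau$ with $L_j$ the Lagrange basis polynomials at $\{c_j\}$, the two integrals above reduce to
\begin{align*}
q_h^{(i)} &= u_h^n - \Delta t \sum_{j=1}^{s} a_{ij}\, \dxl f(q_h^{(j)}), \quad i = 1,\ldots,s, \\
u_h^{n+1} &= u_h^n - \Delta t \sum_{i=1}^{s} b_i\, \dxDG f(q_h^{(i)}),
\end{align*}
which is exactly the cRKDG scheme associated with the underlying collocation RK method, understood in the natural implicit extension of \eqref{eq:crkdg} where the stage sums run up to $s$. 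The main technical obstacle is simply to verify that step one really does produce a pointwise-in-$t$ identity: one has to be careful that $\dxl f$ applied to $q_h(\cdot,\tau) \in V_h$ is well defined at each $\tau$ and that the quadrature exactness statement for the collocation rule applies to this integrand. Both are standard once the spatial-only projection is in place, and the rest of the argument is a direct bookkeeping exercise parallel to the $\Pi_{\mathrm{st}}$ derivation already presented.
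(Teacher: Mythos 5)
Your proposal is correct and follows essentially the same route as the paper: the paper first derives the collocated form \eqref{eq:aderdg-col} with $\Pi_{\mathrm{st}}$ and then observes that substituting the spatial projection $\Pi$ decouples the stages so that $\left(\Pi\nabla\cdot f\right)^{(j)}$ collapses to $\dxl f\bigl(q_h^{(j)}\bigr)$, while you perform the substitution at the level of the integral form before collocating; the resulting identification with the (generally implicit) $s$-stage collocation RK tableau is the same. The only nonessential quibble is that no quadrature-exactness claim is needed here (unlike in the subsequent linear equivalence theorem), since applying the collocation rule is part of the definition of the scheme rather than something to be justified.
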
 

Moreover, for linear conservation laws with constant coefficients, we have the following equivalence theorem. 
\begin{theorem}
    For linear hyperbolic conservation laws with constant coefficients,  the ADER-DG method \eqref{eq-ader1} with a $\mathcal{P}_{K,n}^{k,s}$ local predictor \eqref{eq-ader2} is equivalent to the cRKDG method with the corresponding RK method being the (implicit) Gauss method. 
\end{theorem}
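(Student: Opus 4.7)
The plan is to invoke the preceding theorem, which writes the ADER-DG scheme as a cRKDG-type scheme under two modifications: replacing the spacetime projection $\Pi_{\mathrm{st}}$ by the spatial projection $\Pi$, and applying an $s$-point collocation approximation in time. In the linear constant-coefficient setting I would verify that both modifications can be carried out exactly (introducing no error) when the collocation nodes are the $s$ Gauss--Legendre points, thereby pinning the associated RK method down as the $s$-stage (implicit) Gauss method.

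First, I would simplify the local predictor. Writing $f(u)=\beta u$ with constant $\beta$ and noting that $q_h\in\mathcal{P}^k(K)\otimes\mathcal{P}^s([t^n,t^{n+1}])$ gives $\nabla\cdot f(q_h)=\beta\cdot\nabla q_h\in\mathcal{P}^{k-1}(K)\otimes\mathcal{P}^s([t^n,t^{n+1}])\subset\mathcal{P}^{k,s}_{K,n}$, so both $\Pi_{\mathrm{st}}$ and the spatial projection $\Pi$ act as the identity on this expression, and $\Pi_{\mathrm{st}}\nabla\cdot f(q_h)=\Pi\nabla\cdot f(q_h)=\dxl f(q_h)$. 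Hence \eqref{eq:aderdg-strong-2} reduces to the pointwise identity $\partial_t q_h + \dxl f(q_h)=0$ on $K\times[t^n,t^{n+1}]$ with $q_h(\cdot,t^n)=u_h^n$, so the first modification is free.

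Next, I would identify the outer time integral and the stage structure. Since $q_h$ has temporal degree at most $s$ and $f$ is linear, $\dxDG f(q_h)$ is a polynomial of degree at most $s$ in $t$. The $s$-point Gauss--Legendre quadrature is exact for polynomials of degree up to $2s-1$, so
\[
\int_{t^n}^{t^{n+1}}\dxDG f(q_h)\,\dd t=\Delta t\sum_{i=1}^s b_i\,\dxDG f(q_h^{(i)}),
\]
where $q_h^{(i)}:=q_h(\cdot,t^n+c_i\Delta t)$ and $\{(c_i,b_i)\}_{i=1}^s$ are the Gauss nodes and weights; this recovers the outer stage \eqref{eq:crkdg-last}. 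For the inner stages, $\partial_t q_h$ is a polynomial of degree at most $s-1$ in $t$, so it is uniquely determined by its values at the $s$ Gauss nodes, which, by the pointwise predictor equation, are $-\dxl f(q_h^{(j)})$. Expanding $\partial_t q_h$ through Lagrange interpolation at these nodes and integrating from $t^n$ to $t^n+c_i\Delta t$ yields
\[
q_h^{(i)}=u_h^n-\Delta t\sum_{j=1}^s a_{ij}\,\dxl f(q_h^{(j)}),\qquad a_{ij}=\int_0^{c_i}\ell_j(\tau)\,\dd\tau,
\]
which are exactly the stage equations of the $s$-stage Gauss RK method in the cRKDG framework \eqref{eq:crkdg-1}. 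Combining the two displays gives the claimed equivalence.

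The main obstacle, I expect, is the careful verification that the weak-form local predictor \eqref{eq-ader2} in the linear case truly reduces to the pointwise identity $\partial_t q_h + \beta\cdot\nabla q_h = 0$ on $K\times[t^n,t^{n+1}]$ with a well-defined polynomial $q_h\in\mathcal{P}^{k,s}_{K,n}$ compatible with the initial data $q_h(\cdot,t^n)=u_h^n$. One needs to analyze the coefficient relations that the weak form imposes, show that they uniquely (and consistently) determine $q_h$, and confirm that $q_h(\cdot,t^n+c_i\Delta t)$ coincides with the Gauss-collocation RK stages. Once this structural reduction is in place, the identification with the Gauss method is a direct consequence of the quadrature-exactness and polynomial-interpolation facts used in the previous two paragraphs.
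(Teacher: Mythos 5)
Your proposal is correct and follows essentially the same route as the paper: in the linear constant-coefficient case both $\Pi_{\mathrm{st}}$ and $\Pi$ act as the identity on $\nabla\cdot(\beta q_h)$, so the predictor becomes the pointwise ODE $\partial_t q_h = -\dxl f(q_h)$, and the exactness of $s$-point Gauss quadrature/collocation on the relevant temporal polynomials reproduces the time integrals exactly, yielding the $s$-stage Gauss RK form of the cRKDG scheme. The only cosmetic difference is that you derive the stage equations directly from the Lagrange-interpolation characterization of collocation methods with $a_{ij}=\int_0^{c_i}\ell_j(\tau)\,\dd\tau$, whereas the paper routes through its intermediate form \eqref{eq:aderdg-col} and simplifies $\left(\Pi_{\mathrm{st}}\nabla\cdot f\right)^{(j)}$ to $\dxl f\left(q_h^{(j)}\right)$; the well-posedness of the local predictor that you flag as an obstacle is likewise taken for granted in the paper.
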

\begin{proof}
For ease of notation, we will only consider the scalar case $f(u) = \beta u$ with $\beta$ being a constant vector. But the system case can be proved similarly. Note that 
\begin{equation*}
    \Pi_{\mathrm{st}} \nabla \cdot f(q_h) =\Pi_{\mathrm{st}} \nabla \cdot (\beta q_h) = \nabla \cdot (\beta q_h) = \Pi \nabla \cdot (\beta q_h)  = \dxl{f(q_h)}\in  \mathcal{P}_{K,n}^{k,s}.
\end{equation*}
Therefore, only considering its dependence on $t$, $\Pi_{\mathrm{st}} \nabla \cdot f(q_h)$ is a temporal polynomial of degree at most $s$. Using the exactness of Gauss quadrature, the temporal integration in \eqref{eq:aderdg-strong-int} can be replaced by an $s$-stage Gauss collocation method. Hence \eqref{eq:aderdg-strong-int} is equivalent to \eqref{eq:aderdg-col} with the RK method chosen as the Gauss method. Moreover, note that
\begin{equation*}
\begin{aligned}
	\left(\Pi_{\mathrm{st}}\nabla\cdot f\right)^{(j)}  
	=& \left(\nabla\cdot \left(\beta \sum_{l = 1}^s q_h^{(l)}(x)b_l(t)\right)\right)\bigg|_{t = t+c_j\Delta t}\\
	=&  \sum_{l = 1}^s \nabla\cdot\left(\beta q_h^{(l)}(x)\right)b_l(t+c_j\Delta t)
	= \nabla\cdot\left(\beta q_h^{(j)}(x)\right) = \dxl f\left(q_h^{(j)}\right). 
\end{aligned}
\end{equation*}    
We can replace $\left(\Pi_{\mathrm{st}}\nabla\cdot f\right)^{(j)}$ by $\dxl f\left(q_h^{(j)}\right)$ in \eqref{eq:aderdg-col}. Then \eqref{eq:aderdg-col} becomes the cRKDG method, where the RK method is the $s$-stage Gauss method.
\end{proof}

\section{Numerical results}
\label{sec:Numerical results}

In this section, we present the numerical results of our cRKDG schemes and compare them with those of the original RKDG schemes. We always couple a $\mathcal{P}^k$-DG method with a $(k+1)$th-order RK method. Unless otherwise stated, for the second- and the third-order schemes, we use the SSP-RK time discretization for the original RKDG method, and \eqref{eq:2} and \eqref{eq:3} for the cRKDG method, respectively. For the fourth- and the fifth-order schemes, we use the classical forth-order RK and the fifth-order Runge–Kutta–Fehlberg methods for both the original and cRKDG methods, respectively. 

\subsection{Accuracy tests}
In this section, we test the accuracy of cRKDG schemes in different settings and demonstrate the effectiveness of cRKDG schemes for handling nonhomogenous Dirichlet boundary conditions.

\subsubsection{One-dimensional tests}
\begin{exmp}[Burgers equations]\label{ex:1dburgers}
	We solve the nonlinear Burgers equation in one dimension with periodic boundary conditions:
	$\partial_{t} u+\partial_{x}\left({u^{2}}/{2}\right)=0$, $x \in(-\pi,  \pi)$, $u(x, 0)=\sin (x)$.
	We use the Godunov flux and compute to $t=0.2$ on both uniform and nonuniform meshes with spatial polynomial degrees $k = 1,2,3,4$.  We compare the numerical results of RKDG and cRKDG methods. Their $L^2$ errors and convergence rates are given in \cref{table_burgers1d}. It can be observed that both schemes could achieve the designed optimal order of accuracy with comparable errors on the same meshes. We have also tested our schemes on randomly perturbed meshes, whose numerical results are omitted due to the space limit, and similar convergence rates are observed.

\begin{table}[h!]
		\centering
		\resizebox{1.0\textwidth}{!}{
			\begin{tabular}{   c|c|c|c  c| c c|cc|cc }
				\hline
		   	&&&\multicolumn{2}{c|}{$k=1$}&\multicolumn{2}{c|}{$k=2$}&\multicolumn{2}{c|}{$k=3$}&\multicolumn{2}{c}{$k=4$} \\
			\cline{4-11}
			 	&&$N$  &$L^2$ error &order&$L^2$ error &order&$L^2$ error &order&$L^2$ error &order\\
				\hline
                \multirow{8}{*}{\rotatebox[origin=c]{90}{{\centering uniform  }}}&
                \multirow{4}{*}{\rotatebox[origin=c]{90}{{\centering RKDG  }}}
                &40& 2.7386e-03& -   &3.8131e-05& -   & 6.3822e-07& -   &1.0505e-08& -   \\
				&&80& 6.9998e-04& 1.97&4.9991e-06& 2.95& 4.1961e-08& 3.93&3.5188e-10& 4.90\\
				&&160&1.7637e-04& 1.99&6.4554e-07& 2.95& 2.7101e-09& 3.95&1.1821e-11& 4.90\\
				&&320&4.4366e-05& 1.99&8.2632e-08& 2.97& 1.7286e-10& 3.97&3.8814e-13& 4.93\\
				\cline{2-11}				
                &\multirow{4}{*}{\rotatebox[origin=c]{90}{{\centering cRKDG}}}
				&40& 2.3502e-03& -   &3.4537e-05& -   & 5.9497e-07& -   &1.0241e-08& -   \\
				&&80& 5.9868e-04& 1.97&4.5379e-06& 2.93& 3.8796e-08& 3.94&3.3912e-10& 4.92\\
				&&160&1.5073e-04& 1.99&5.8341e-07& 2.96& 2.4857e-09& 3.96&1.1335e-11& 4.90\\
				&&320&3.7882e-05& 1.99&7.4902e-08& 2.96& 1.5801e-10& 3.98&3.7040e-13& 4.94\\
                \hline

                \multirow{8}{*}{\rotatebox[origin=c]{90}{{\centering nonuniform  }}}&
                \multirow{4}{*}{\rotatebox[origin=c]{90}{{\centering RKDG }}}
                &40& 4.2044e-03& -&7.2335e-05& -&1.6005e-06& -&3.5190e-08& -\\
				&&80& 1.0118e-03& 2.06&9.6082e-06& 2.91&1.0456e-07& 3.94&1.1728e-09& 4.91\\
				&&160&2.5507e-04& 1.99&1.2302e-06& 2.97&6.8121e-09& 3.94&3.9468e-11& 4.89\\
				&&320&6.4143e-05& 1.99&1.5724e-07& 2.97&4.3541e-10& 3.97&1.2971e-12& 4.93\\
				\cline{2-11}				
                &\multirow{4}{*}{\rotatebox[origin=c]{90}{{\centering cRKDG}}}
				&40&3.7976e-03& -& 6.8122e-05& -&1.5490e-06& -&3.4695e-08& -\\
				&&80&9.0218e-04& 2.07& 8.9388e-06& 2.93&9.8699e-08& 3.97&1.1449e-09& 4.92\\
				&&160&2.2598e-04& 2.00&1.1464e-06& 2.96&6.4244e-09& 3.94&3.8321e-11& 4.90\\
				&&320&5.6822e-05& 1.99&1.4645e-07& 2.97&4.0891e-10& 3.97&1.2563e-12& 4.93\\
                \hline

                
		\end{tabular}
        }
		\caption{$L^2$ error of RKDG and cRKDG methods for the one-dimensional Burgers equation on uniform and nonuniform meshes in \cref{ex:1dburgers}. The nonuniform meshes are generated by perturbing every other node by $h/3$. $\Delta t = 0.1h$ for $k = 1, 2$ and $\Delta t = 0.05 h$ for $k = 3,4$.} \label{table_burgers1d}
	\end{table}
\end{exmp}

\begin{exmp}[Euler equations and validation for \Cref{sec:butcher-RK}]\label{ex:1dremark}
In this test, we solve   the following nonlinear system of one-dimensional Euler equations $\partial_t{u}+\partial_x{f}({u})=0$ on $(0,2)$, 
where ${u}=(\rho, \rho w, E)^{\mathrm{T}}$,  ${f}({u})=\left(\rho w, \rho w^{2}+p, w(E+p)\right)^{\mathrm{T}}$, $E={p}/{(\gamma-1)}+\rho w^{2}/2$ with $\gamma=1.4$. The initial condition is set as
$\rho(x, 0) = 1 + 0.2\sin(\pi x)$, $w(x, 0) = 1$, $p(x, 0) = 1$, and the periodic boundary condition is imposed. The exact solution is $\rho(x, t) = 1 + 0.2\sin(\pi(x- t))$, $w(x, t) = 1$, $p(x, t) = 1$. We use the local Lax--Friedrichs flux to compute to $t = 2$.  

First, in \cref{table:euler accuracy}, we show the numerical error of the cRKDG method with the CFL numbers close to those obtained from the linear stability analysis, which is $0.3$ for the second-order case and $0.16$ for the third-order case. As anticipated, the cRKDG method can  achieve their designed orders of accuracy and remains stable for this nonlinear system when refining the mesh.

Next, we clarify the comments in \Cref{sec:butcher-RK} and explain why the alternations in the cRKDG scheme should be made based on the Butcher form but not the Shu--Osher form. We implement the schemes \eqref{eq:ssprk2-wrong_scheme} and \eqref{eq:ssprk2-correct_scheme} and observe suboptimal convergence for scheme \eqref{eq:ssprk2-wrong_scheme} and optimal convergence for scheme \eqref{eq:ssprk2-correct_scheme}, as shown in \cref{table:ssprk2-wrong_scheme}. We have also done a similar test to examine the schemes based on the third-order SSP-RK (SSP-RK3) time discretization coupled with $\mathcal{P}^2$ spatial polynomials. A suboptimal convergence rate is observed when we modify the scheme in its Shu--Osher form (similar to \eqref{eq:ssprk2}, or equivalently, \eqref{eq:ssprk2-wrong_scheme}), and the optimal convergence rate is observed when we modify the scheme in its Butcher form (similar to \eqref{eq:ssprk2-correct_scheme}).

\begin{table}[h!]
		\centering
		\resizebox{1.\textwidth}{!}{
		\begin{tabular}{  c | cc | cc|cc | cc }
			\hline  
             &\multicolumn{4}{c|}{\eqref{eq:2}, $k=1$}&\multicolumn{4}{c}{\eqref{eq:3}, $k=2$}\\
            \cline{2-9}
			 {$N$}  &$L^2$ error &order&$L^\infty$ error &order&$L^2$ error &order&$L^\infty$ error &order\\
            
   \hline  
20 &  8.6401e-04 &        -  &  1.5693e-03 &        -  &  4.8592e-05 &        -  &  9.6982e-05 &        -  \\
40 &  2.1391e-04 &         2.01 &  4.3033e-04 &         1.87 &  6.3337e-06 &         2.94 &  1.3105e-05 &         2.89 \\
80 &  5.3413e-05 &         2.00 &  1.1235e-04 &         1.94 &  7.9905e-07 &         2.99 &  1.6569e-06 &         2.98 \\
160 &  1.3096e-05 &         2.03 &  2.6785e-05 &         2.07 &  9.9311e-08 &         3.01 &  2.0228e-07 &         3.03 \\
320 &  3.3054e-06 &         1.99 &  7.0220e-06 &         1.93 &  1.2477e-08 &         2.99 &  2.5741e-08 &         2.97 \\
640 &  8.3321e-07 &         1.99 &  1.8091e-06 &         1.96 &  1.5656e-09 &         2.99 &  3.2598e-09 &         2.98 \\
1280& 2.0304e-07 &         2.04 &  4.1002e-07 &         2.14  & 1.9242e-10 &         3.02 &  3.8229e-10 &         3.09     \\
2560&  5.1018e-08 &         1.99 &  1.0552e-07 &         1.96 &    2.4061e-11 &         3.00 &  4.7861e-11 &         3.00        \\
\hline
		\end{tabular}}
		\caption{$L^2$ and $L^\infty$  error for one-dimensional Euler equations in \cref{ex:1dremark}. CFL is $0.3$ for $k=1$ and $0.16$ for   $k=2$.} \label{table:euler accuracy}
	\end{table}

	\begin{table}[h!]
		\resizebox{1.\textwidth}{!}{
		\begin{tabular}{ c|c | c | c | c | c|c|c|c  }
			\hline
            &\multicolumn{4}{c|}{Wrong implementation with Shu--Osher form}&\multicolumn{4}{c}{Correct implementation with Butcher form}\\
            \cline{2-9}
            &\multicolumn{2}{c|}{Scheme \eqref{eq:ssprk2-wrong_scheme}, $k = 1$}&\multicolumn{2}{c|}{SSP-RK3, $k = 2$}&\multicolumn{2}{c|}{Scheme \eqref{eq:ssprk2-correct_scheme}, $k = 1$}&\multicolumn{2}{c}{SSP-RK3, $k = 2$}\\
            \hline
			$N$  &$L^2$ error& order &$L^2$ error &order&$L^2$ error& order &$L^2$ error &order\\
			\hline  
			 20  &  2.2503e-002 &    	     - &  8.42142e-04 &   - 		   &  8.3248e-04  &    -	    &  4.7661e-05 &  		 -     \\
40  &  1.1013e-002 &         1.03  &  2.4793e-04 &         1.93 &  1.9946e-04  &         2.06&  6.1420e-06 &         2.96   \\
80  &  5.4484e-003 &         1.03  &  6.3363e-05 &         1.97 &   4.9608e-05 &         2.01&   7.7938e-07 &         2.98  \\    
			\hline  
		\end{tabular}}
		\caption{$L^2$ error for one-dimensional Euler equations in \cref{ex:1dremark}. CFL is $0.1$.} \label{table:ssprk2-wrong_scheme}
	\end{table}
	
\end{exmp}

\begin{exmp}[Boundary error]\label{ex:1derror} 
    In this example, we test the problem \cite[Section 4]{zhang2011third} to examine the possible accuracy degeneration due to the nonhomogeneous boundary condition. We use the $\mathcal{P}^2$-DG method with upwind flux and the third-order RK scheme to solve $\partial_tu+\partial_xu=0$ on domain $(0,4 \pi)$. The initial condition is set as $u(x,0) = \sin(x)$ and the exact solution is given by $u(x, t)=\sin (x-t)$. Both the periodic and the inflow boundary conditions are considered in our test.
	
	We set  {$\Delta t=0.16 h$} and compute to $t = 20$. Numerical errors and convergence rates are listed in \cref{table:boundary_treatment}. It could be seen that the original RKDG method achieves the optimal convergence rate for the periodic boundary condition but a degenerated rate for the inflow boundary condition. While in contrast, the cRKDG method is able to achieve optimal convergence rates for both types of boundary conditions. 
	
	\begin{table}[h!]
		\centering
		\resizebox{1.\textwidth}{!}{
		\begin{tabular}{ c|c | cc | cc|cc | cc }
			\hline  
            &&\multicolumn{4}{c|}{periodic boundary}&\multicolumn{4}{c}{inflow boundary}\\
            \cline{3-10}
			&{$N$}  &$L^2$ error &order&$L^\infty$ error &order&$L^2$ error &order&$L^\infty$ error &order\\
            \hline
			\multirow{6}{*}{\rotatebox[origin=c]{90}{{\centering RKDG}}}
     &40   & 4.5605E-04&  -  &4.0696E-04&  -    & 3.8572E-04&  -  &3.8889E-04&  -  \\
	&80   & 5.5726E-05& 3.03&5.1832E-05& 2.97  & 4.8763E-05& 2.98&4.8941E-05& 2.99\\
	&160  & 6.9243E-06& 3.01&6.5389E-06& 2.99  & 6.3065E-06& 2.95&7.2957E-06& 2.75\\
	&320  & 8.6412E-07& 3.00&8.2105E-07& 2.99  & 8.4142E-07& 2.91&1.7155E-06& 2.09\\
	&640  & 1.0796E-07& 3.00&1.0286E-07& 3.00  & 1.1738E-07& 2.84&4.1662E-07& 2.04\\
	&1280 & 1.3493E-08& 3.00&1.2873E-08& 3.00  & 1.7331E-08& 2.76&1.0270E-07& 2.02\\
	\hline  
	\multirow{6}{*}{\rotatebox[origin=c]{90}{{\centering cRKDG}}}
	&40   &  1.7656E-03&  -  &7.3382E-04&  -    & 7.3651E-04&  -  &4.6481E-04&  -   \\
	&80   &  2.2030E-04& 3.00&9.0392E-05& 3.02  & 9.0921E-05& 3.02&5.9025E-05& 2.98 \\
	&160  &  2.7536E-05& 3.00&1.1209E-05& 3.01  & 1.1296E-05& 3.01&7.4400E-06& 2.99 \\
	&320  &  3.4428E-06& 3.00&1.3953E-06& 3.01  & 1.4079E-06& 3.00&9.3401E-07& 2.99 \\
	&640  &  4.3036E-07& 3.00&1.7415E-07& 3.00  & 1.7576E-07& 3.00&1.1701E-07& 3.00 \\
	&1280 &  5.3797E-08& 3.00&2.1783E-08& 3.00  & 2.1957E-08& 3.00&1.4643E-08& 3.00 \\
			\hline
		\end{tabular}}
		\caption{Error table for the one-dimensional linear advection equation with periodic and inflow boundary conditions in \cref{ex:1derror}. For the periodic boundary, $u(0, t)=u(4 \pi, t)$; for the inflow boundary, $u(0, t)=\sin (-t)$. $k = 2$ and {$\Delta t = 0.16h$.}}\label{table:boundary_treatment}
	\end{table}
\end{exmp}
\subsubsection{Two-dimensional tests}
The triangular meshes in this section are generated by taking a cross in each cell of the $N\times N$ uniform square meshes.
\begin{exmp}[Euler equations in two dimensions] \label{ex:2deuler}
	We solve the nonlinear Euler equations in two dimensions with the periodic boundary condition: 
	$\partial_t{u}+\partial_x{f}({u})+\partial_y{g}({u})=0$, where $u = (\rho, \rho w, \rho v, E)^T$, $f(u) = (\rho w, \rho w^2 + p, \rho w v, w(E+p))^T$, $g(u) = (\rho v, \rho wv, \rho v^2 + p, v(E+p))^T$, $E={p}/{(\gamma-1)}+\rho (w^2+v^{2})/2$ with $\gamma=1.4$.
	The initial condition is set as $\rho(x, y, 0)=1+0.2 \sin (\pi(x+y))$, $w(x, y, 0)=0.7$, $v(x, y, 0)=0.3$, $p(x, y, 0)=1$. The exact solution is $\rho(x, y, t)=1+0.2 \sin (\pi(x+y-(w+v) t))$, $w=0.7$, $v=0.3$, $p=1$. We use the local Lax--Friedrichs flux and compute the solution up to $t=0.5$. Here the CFL numbers are taken as $0.2$ and $0.12$ for $\mathcal{P}^1$ and $\mathcal{P}^2$ cRKDG methods, and as $0.3$ and $0.18$ for $\mathcal{P}^1$ and $\mathcal{P}^2$ RKDG methods. We list numerical results in \cref{table:2deuler-periodic-rectangle}.  We can observe that both RKDG and cRKDG schemes achieve their expected order of optimal accuracy with comparable numerical errors on both triangular and rectangular meshes.
	
	\begin{table}[h!]
		\resizebox{1.\textwidth}{!}{
			\begin{tabular}{c|c|c|c|c|c|c|c|c|c}
				\hline  
				&&\multicolumn{4}{c|}{RKDG}   &\multicolumn{4}{c}{cRKDG} \\
				\cline{3-10}  
                &&\multicolumn{2}{c|}{$k = 1$}&\multicolumn{2}{c|}{$k = 2$}&\multicolumn{2}{c|}{$k = 1$}&\multicolumn{2}{c}{$k = 2$}\\
                \cline{3-10}
				&$N$ & $L^2$ error & order & $L^2$ error & order & $L^2$ error & order& $L^2$ error & order \\ \hline
	   	\multirow{4}{*}{\rotatebox[origin=c]{90}{{\centering triangular}}}
				 &20    & 4.5685e-04 &        -  & 5.1570e-05 &        -   & 4.4847e-04 &        -   & 4.9510e-05 &        -   \\
			     &40    & 1.1073e-04 &         2.04 & 6.1085e-06 &         3.08  & 1.0842e-04 &         2.05  & 5.8449e-06 &         3.08  \\
				 &80    & 2.7508e-05 &         2.01 & 7.7595e-07 &         2.98  & 2.6859e-05 &         2.01  & 7.4251e-07 &         2.98  \\
				 &160   &  6.8934e-06 &         2.00&  9.6981e-08 &         3.00 &  6.6652e-06 &         2.01 &  9.2728e-08 &         3.00 \\
				\hline  
	
\multirow{4}{*}{\rotatebox[origin=c]{90}{{\centering rectangular}}}
				 &20    & 2.4343e-03 & -& 1.1101e-04 & - & 2.4662e-03 & - & 1.1300e-04 & -  \\
			     &40    & 4.2736e-04 & 2.51& 1.3885e-05 & 3.00 & 4.2767e-04 & 2.53 & 1.4213e-05 & 2.99  \\
				 &80    & 9.0669e-05 & 2.24& 1.7297e-06 & 3.00 & 8.9727e-05 & 2.25 & 1.7737e-06 & 3.00  \\
				 &160   & 2.1445e-05 & 2.08&            2.1586e-07 &3.00   & 2.1140e-05 & 2.09 &           2.2173e-07 &         3.00\\
				\hline  
		\end{tabular}}
		\caption{$L^2$ error for two-dimensional Euler equations with the periodic boundary condition on triangular and rectangular meshes in \cref{ex:2deuler}.}
		\label{table:2deuler-periodic-rectangle}
	\end{table}
\end{exmp}
\begin{exmp}[Boundary error]\label{ex:2dadv} 
	Consider the  linear advection equation in two dimensions
	$\partial_tu+\partial_xu+\partial_y u=0$, $(x, y) \in[-1,1] \times[-1,1]$, $u(x, y, 0)=\sin \left(\pi   x\right) \sin \left(\pi y\right)$,
	The exact solution is $u(x, y, t)=\sin \left( \pi (x-t)\right) \sin \left(\pi (y-t)\right)$. We use the upwind flux and compute the solution up to $t=0.4$ with $\mathcal{P}^3$ elements.  
    The numerical results with both periodic and inflow boundary conditions are given in \cref{table:2dlinear-dirichlet}. It can be observed that, on the same set of triangular meshes, both schemes achieve their designed order of accuracy with comparable numerical error under the periodic condition. For the inflow boundary condition, the RKDG method becomes suboptimal while the cRKDG method remains optimal. 
		\begin{table}[t!]
		\resizebox{1.\textwidth}{!}{
			\begin{tabular}{c|c|c|c|c|c|c|c|c|c}
				\hline  
				&&\multicolumn{4}{c|}{periodic boundary}   &\multicolumn{4}{c}{inflow boundary} \\
				\cline{3-10}  
				&$N$ & $L^2$ error & order & $L^\infty$ error & order & $L^2$ error & order& $L^\infty$ error & order \\ \hline
                \multirow{4}{*}{\rotatebox[origin=c]{90}{{\centering RKDG}}}
                &20      &  1.6470e-06 &         - &  6.1638e-06 &         -& 2.4337e-06 &  - &  2.7118e-05 &  - \\
                &40      &  1.0807e-07 &         3.93 &  4.1438e-07 &         3.89 & 2.8514e-07 &  3.09 &  6.6092e-06 &  2.04 \\
                &80      &  6.7371e-09 &         4.00 &  2.5981e-08 &         4.00 & 4.5845e-08 &  2.64 &  1.6418e-06 &  2.01 \\
                &160     &  4.2131e-10 &         4.00 &  1.6135e-09 &         4.01& 7.9968e-09 &  2.52 &  4.0979e-07 &  2.00 \\
                \hline
                \multirow{4}{*}{\rotatebox[origin=c]{90}{{\centering cRKDG}}}
                &20      &  1.4623e-06 &         - &  4.4906e-06 &         -& 1.7295e-06 &   - &  4.9173e-06 &  - \\
                &40      &  8.9533e-08 &         4.03 &  2.8503e-07 &         3.98 & 1.0770e-07 &   4.01 &  3.0924e-07 &  3.99 \\
                &80      &  5.6015e-09 &         4.00 &  1.7533e-08 &         4.02 & 6.7326e-09 &   4.00 &  1.9284e-08 &  4.00 \\
                &160      &  3.5131e-10 &         3.99 &  1.0718e-09 &         4.03& 4.2143e-10 &   4.00 &  1.2036e-09 &  4.00 \\
				\hline  
		\end{tabular}}
		\caption{Error table for the two-dimensional linear advection equation on triangular meshes with periodic and inflow boundary conditions in \cref{ex:2dadv}. $k = 3$. $\Delta t = h/20$ for the RKDG scheme and $\Delta t = h/30$ for the cRKDG scheme. } 
		\label{table:2dlinear-dirichlet}
	\end{table}	
\end{exmp}

\subsection{Tests with discontinuous solutions} 
We now test the cRKDG method for problems with discontinuous solutions. Only cell averages of the solutions are plotted. For one-dimensional problems, we apply the TVB minmod limiter for systems in \cite{rkdg3} to identify troubled cells and the WENO limiter in \cite{qiu2005discontinuous}  for reconstruction. For  two-dimensional problems, we apply the standard TVB minmod limiters in \cite{rkdg5} to identify troubled cells and reconstruct polynomials, with a suitable parameter $M$ to be specified for each problem.  For the RKDG method, the limiter is applied in every inner stage. For the cRKDG method, the limiter is only applied once at the final stage for each time step. 

Since the cRKDG method is equivalent to the LWDG method for linear problems, they share the same CFL limit. As has been tested in \cite{qiu2005discontinuous} with the von Neumann analysis, this CFL limit will be slightly more restrictive compared with the original RKDG method. In our numerical tests, the CFL numbers are taken as $0.3$ and $0.16$ for $\mathcal{P}^1$ and $\mathcal{P}^2$ cRKDG methods, and as $0.3$ and $0.18$ for $\mathcal{P}^1$ and $\mathcal{P}^2$ RKDG methods, respectively, unless otherwise noted. 

\subsubsection{One-dimensional tests}
\begin{exmp}[Buckley–Leverett equation]\label{ex:1dbl}
We solve two Riemann problems associated with the Buckley--Leverett equation $\partial_t u+\partial_x\left({4 u^2}/{(4 u^2+(1-u)^2)}\right)=0$. The initial condition is set as $u(x,0) = u_l$ for $x<0$ and $u(x,0) = u_r$ for $x\geq 0$, where we have $u_l = 2$ and $u_r = -2$ in the first test and $u_l = -3$ and $u_r = 3$ in the second test. We set $k = 1,2$ and compute to $t = 1$ with $80$ mesh cells. The Godunov flux and WENO limiter with TVB constant $M=1$ are employed in the simulation. We observe that the cRKDG method converges to the correct entropy solutions and its numerical results are in good agreement with the original RKDG method. We have also plotted the solution by the first-order Roe scheme in green dots. In contrast, the solution by the Roe scheme converges to a non-entropy solution.  
\medskip

\begin{figure}[!ht]
	\centering
\begin{subfigure}[t]{.24 \textwidth} 
			\centering	
   \includegraphics[trim=0cm 0cm 0cm 0cm,width=1.0\textwidth]{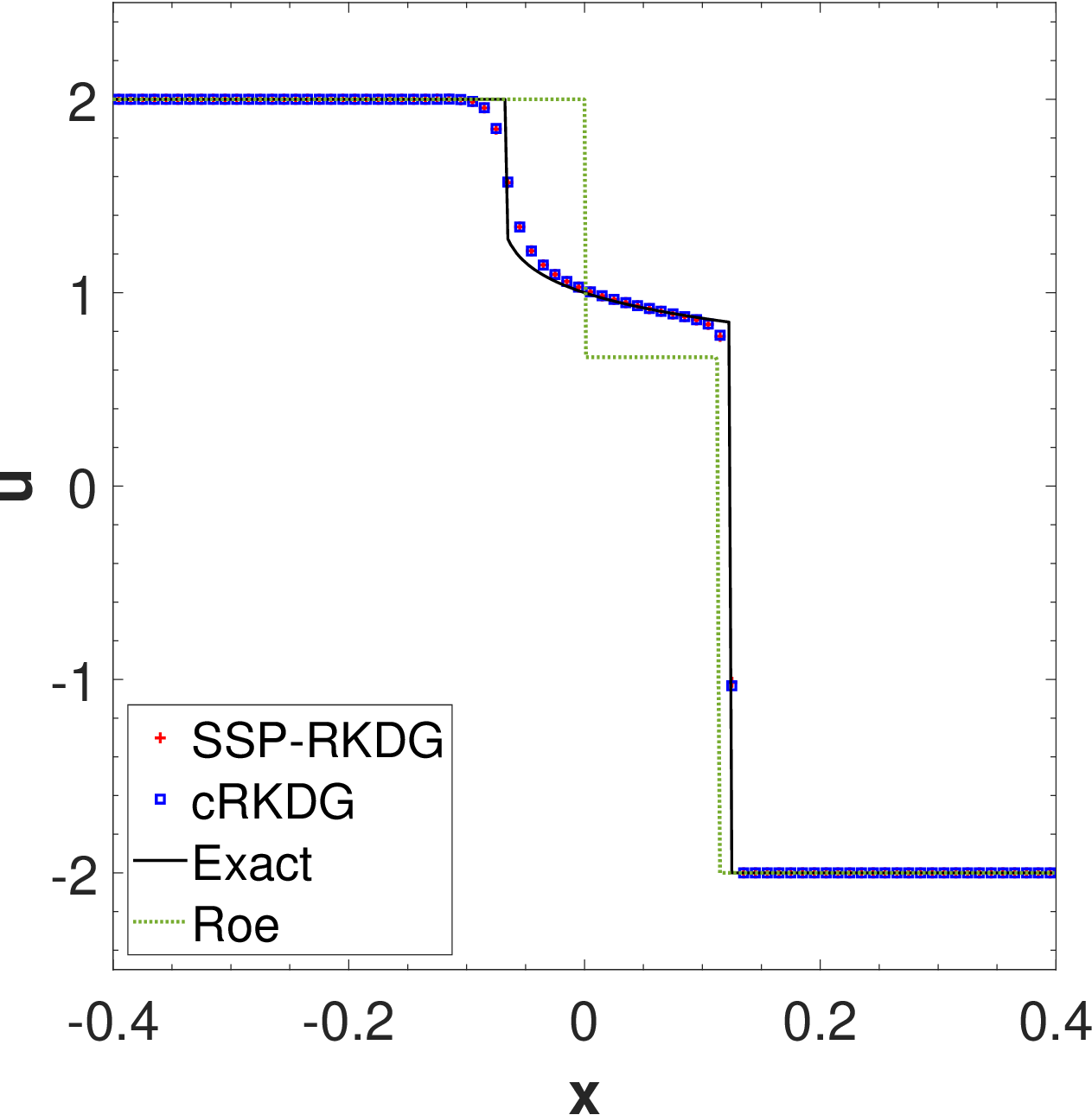}
			\caption{$k=1$, test 1}		 
      \end{subfigure}		 
		\begin{subfigure}[t]{.24 \textwidth}\centering\includegraphics[trim=0cm 0cm 0cm 0cm,width=1.0\linewidth]            {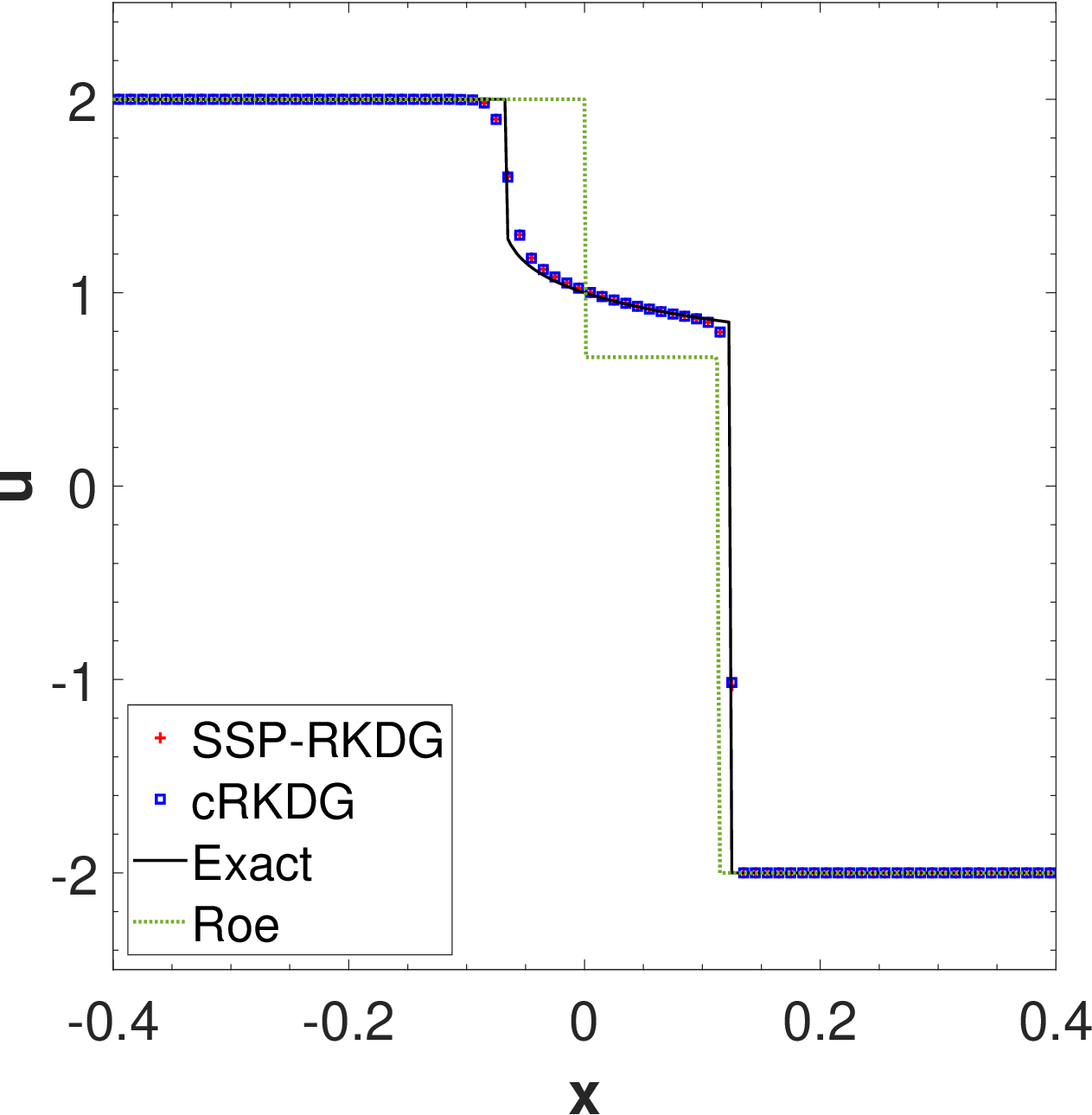}
 			\caption{$k=2$, test 1}
		\end{subfigure}
		\begin{subfigure}[t]{.24 \textwidth}
			\centering
\includegraphics[trim=0cm 0cm 0cm 0cm,width=1.0\linewidth]  {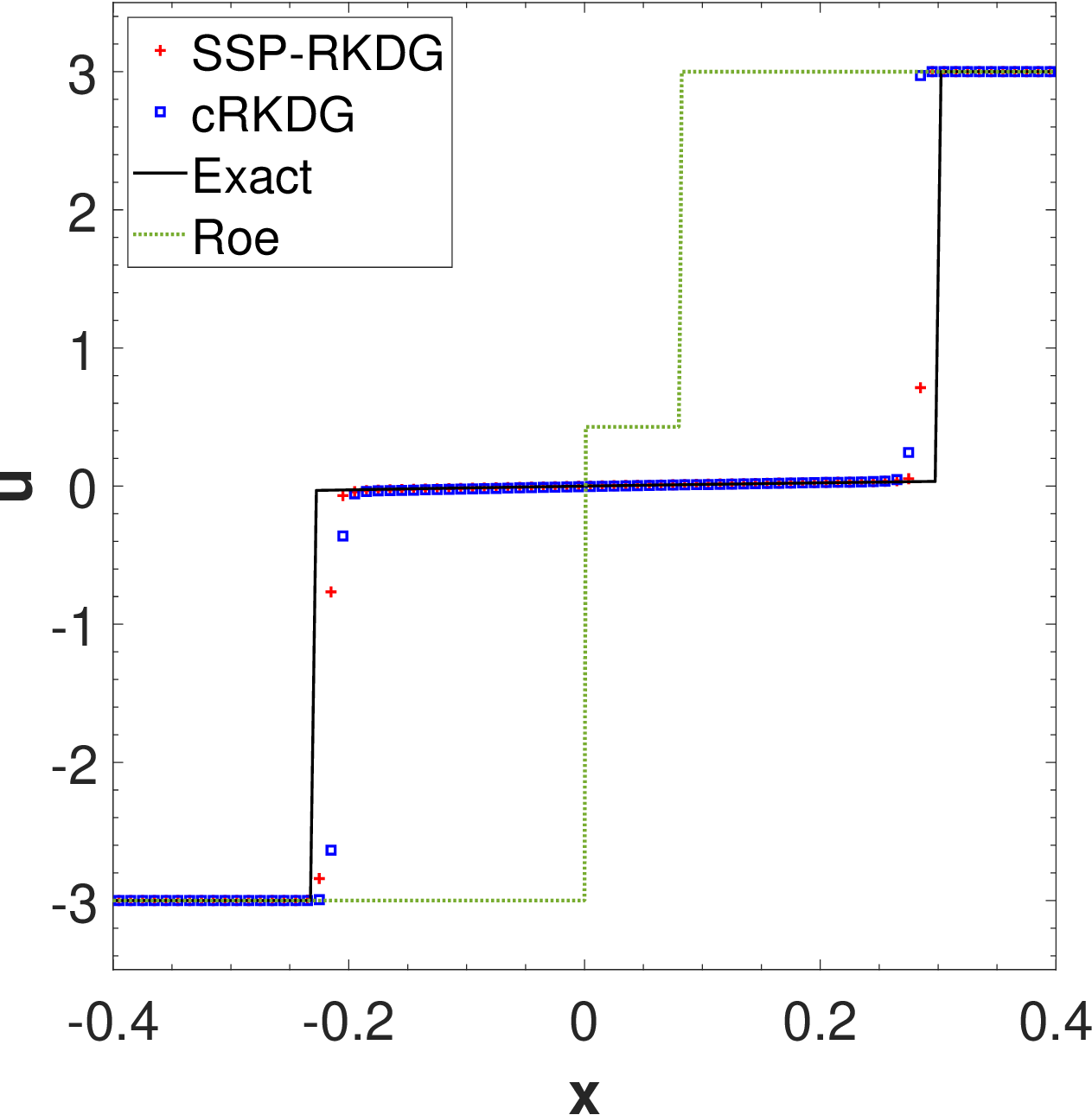}
			\caption{$k=1$, test 2} 
		\end{subfigure} 
		\begin{subfigure}[t]{.24 \textwidth}
			\centering
			\includegraphics[trim=0cm 0cm 0cm 0cm,width=1.0\linewidth]   {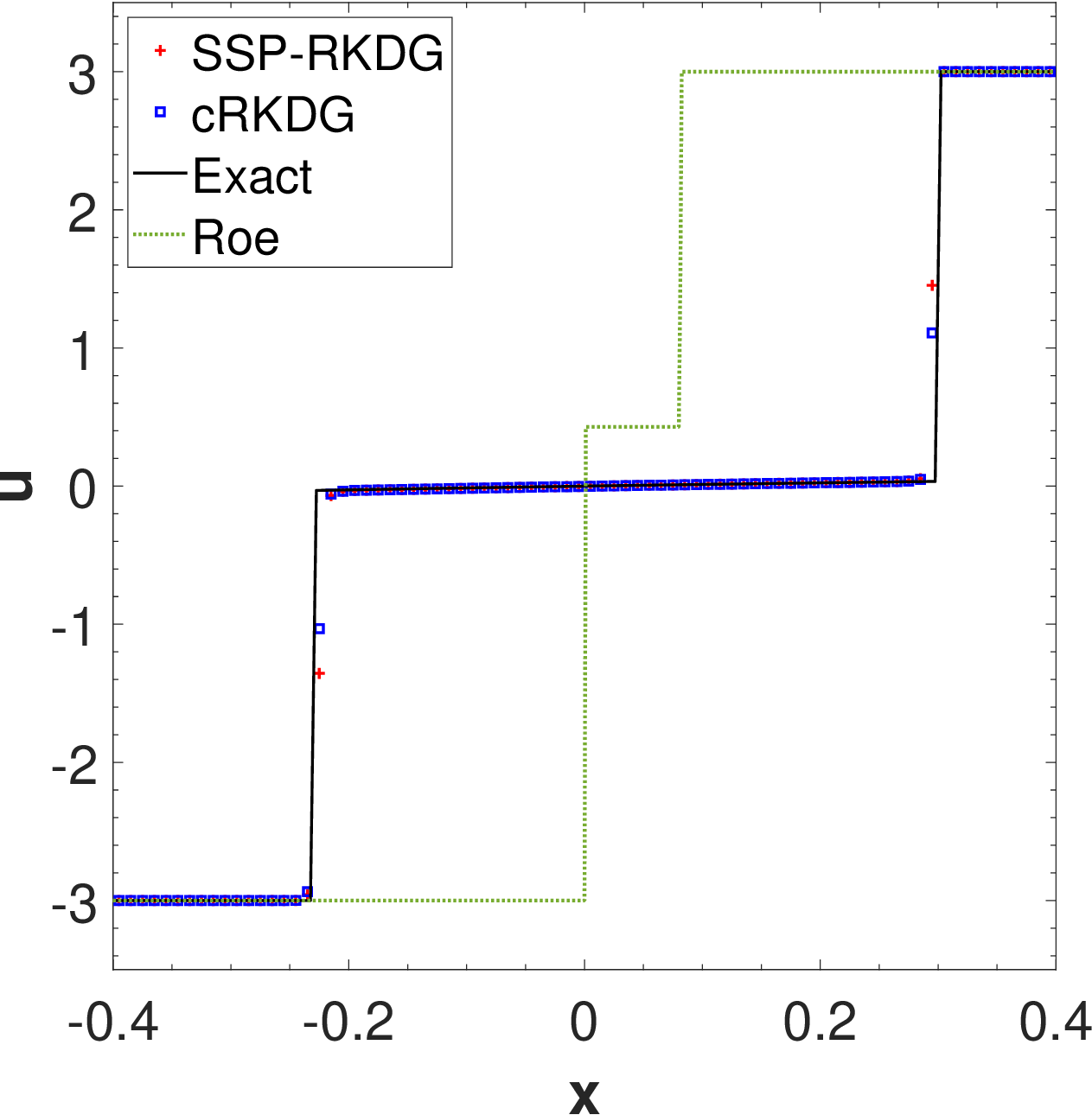}
			\caption{$k=2$, test 2}
		\end{subfigure}
		\caption{Solution profiles for two Riemann problems of the Buckley--Leverett equation in \cref{ex:1dbl}. $N = 80$ and $M = 1$. }
		 
	\end{figure}

\vspace{-0.7em}

\end{exmp}

\begin{exmp}[Sod problem]\label{ex:sod}
In this test, we solve a Riemann problem for the one-dimensional Euler equations given in \cref{ex:1dremark}. The initial condition
	is set as	
\begin{equation*}
	(\rho, w, p)= \begin{cases}(1,0,1), & x \leq 0.5, \\ (0.125,0,0.1), & x> 0.5.\end{cases}
\end{equation*}
 where $\gamma=1.4.$ We compute to $t = 0.2$ with $N=100$ elements. We use the local Lax--Friedrichs flux, WENO
limiter and TVB constant $M=1$. The solution profiles are given in \cref{fig:sod}, from which we can observe that the cRKDG method performs well in capturing the shock and contact discontinuity, and its numerical solution matches the RKDG solution and the exact solution.
	
	\begin{figure}[h!]
            \centering
		\begin{subfigure}[t]{.31\textwidth}
			\centering
			\includegraphics[trim=0cm 1cm 0cm 1cm, width=1. \linewidth]{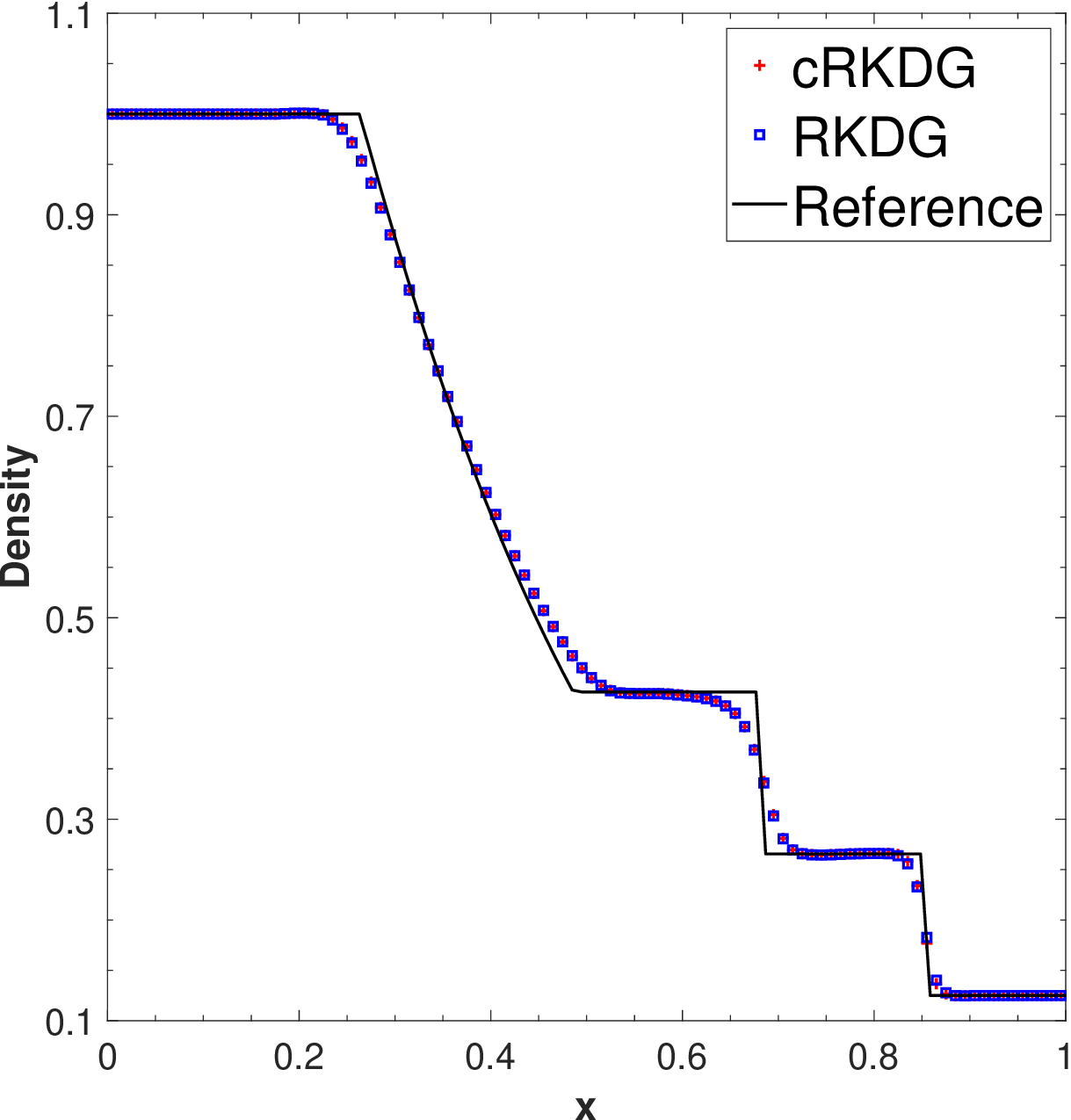}
			\caption{$k=1$ }
		\end{subfigure}%
		\hspace{0mm}
		\begin{subfigure}[t]{.31 \textwidth}
			\centering
			\includegraphics[trim=0cm 1cm 0cm 1cm, width=1. \linewidth]{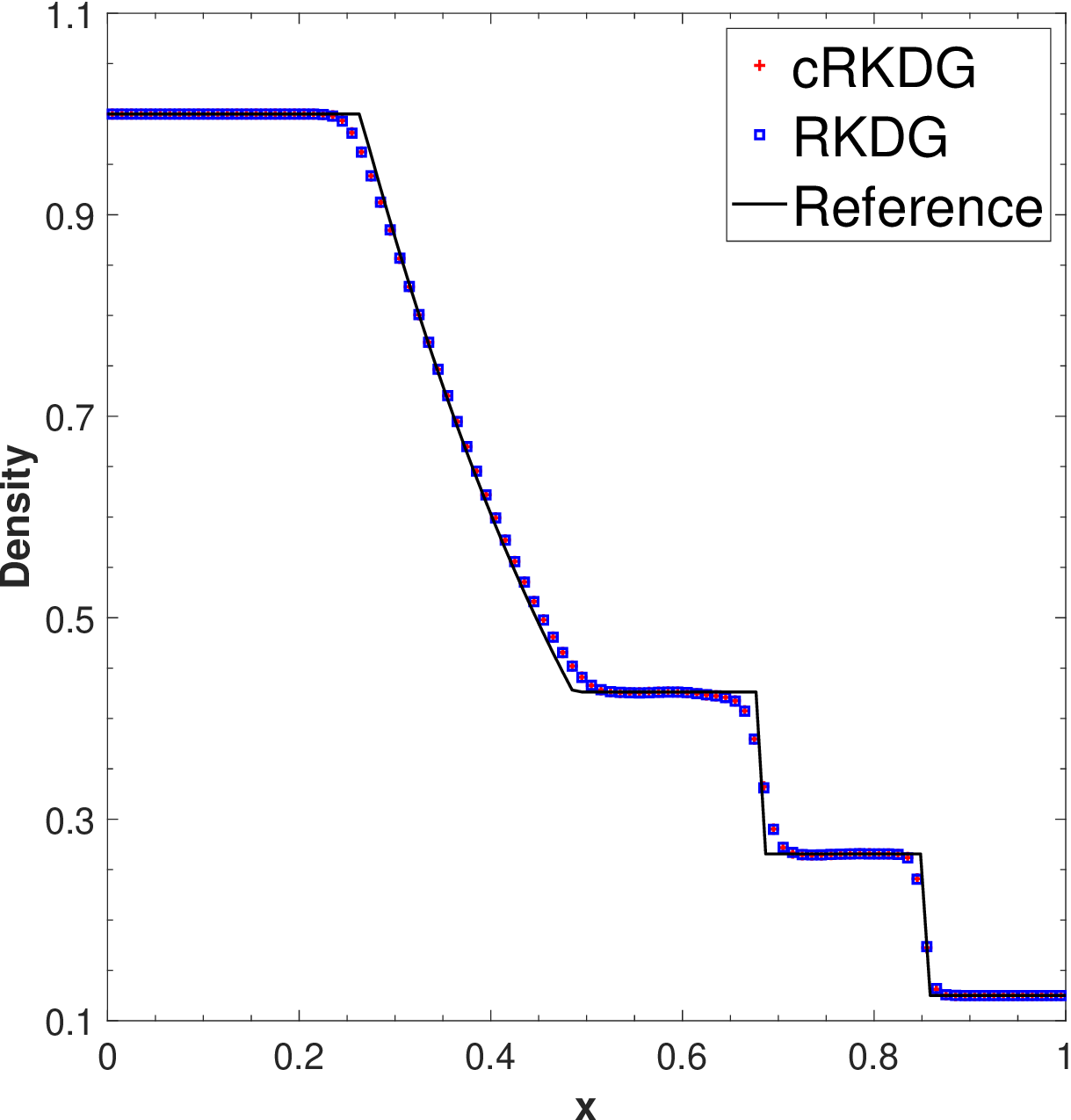}
			\caption{$k=2$}
		\end{subfigure} 
		\hspace{0mm}
		\begin{subfigure}[t]{.31 \textwidth}
			\centering
			\includegraphics[trim=0cm 1cm 0cm 1cm, width=1. \linewidth]{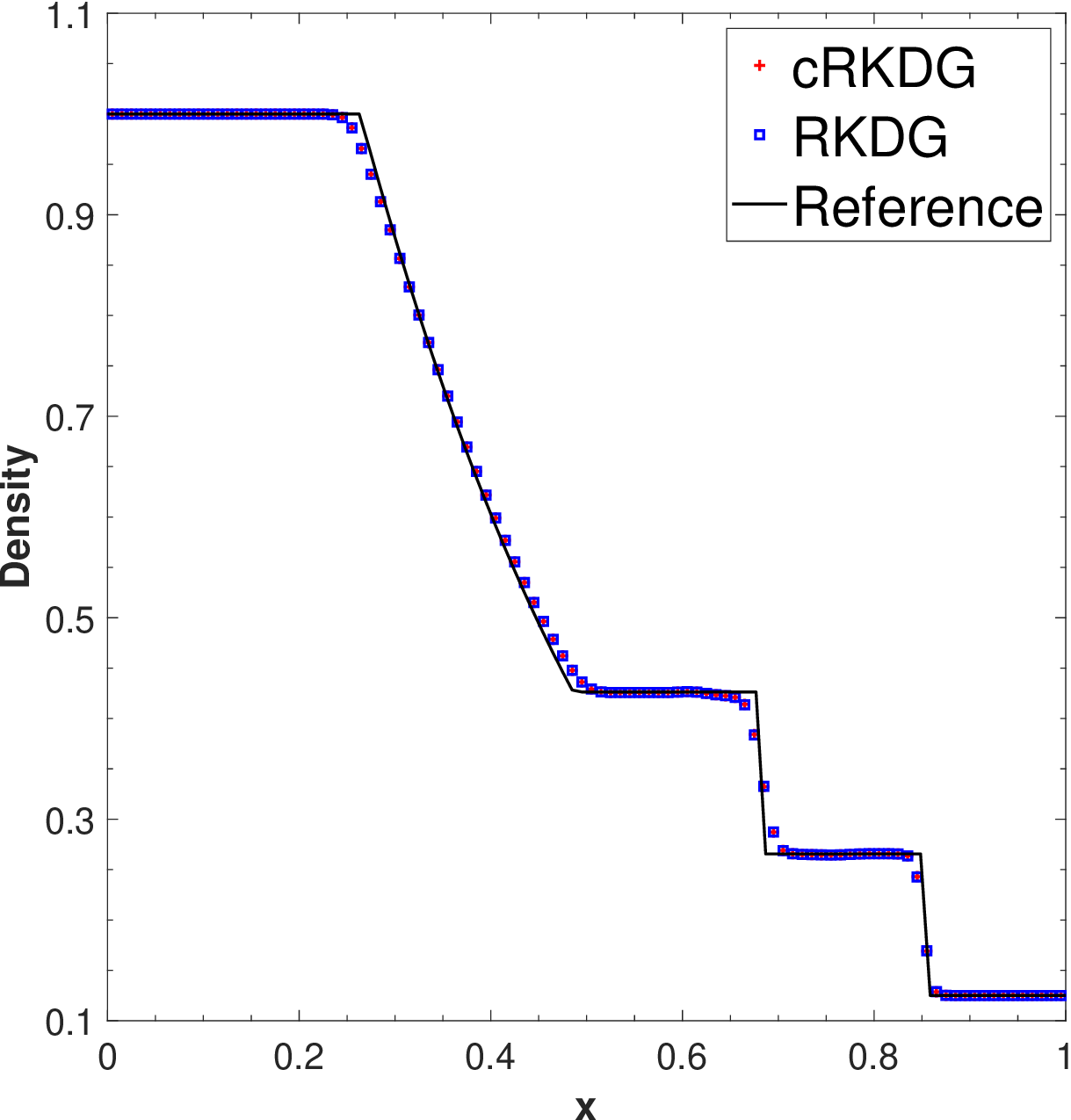}
			\caption{$k=3$}
		\end{subfigure} 
		\caption{Solution profiles for the Sod problem in \cref{ex:sod}. $N=100$ and $M=1$.}
		\label{fig:sod}
	\end{figure}

\end{exmp}

\begin{exmp}[Interacting blast waves]\label{ex:blast}
	We consider the interacting blast waves with Euler equations using the following initial condition
	\begin{equation*}
		(\rho, w, p)= \begin{cases}(1,0,1000), & x \leq 0.1, \\ (1,0,0.01), & 0.1<x \leq 0.9, \\ (1,0,100), & x>0.9.\end{cases}
	\end{equation*}
Reflective boundaries are imposed both at $x=0$ and $x=1$. We use the local Lax--Friedrichs flux, WENO
limiter and TVB constant $M=200$, and compute the solution to $t = 0.038$. Numerical results are shown in \cref{fig:blastwave}. The numerical density $\rho$ is plotted against the reference solution which is a converged solution computed by the fifth-order finite difference WENO scheme on a much refined mesh. It is observed that numerical solutions obtained from RKDG and cRKDG methods are very close.
	\begin{figure}
	\centering
\begin{subfigure}[t]{.24  \textwidth}
			\centering			
   \includegraphics[trim=0cm 1cm 0cm 1cm,width=1.0\textwidth]  {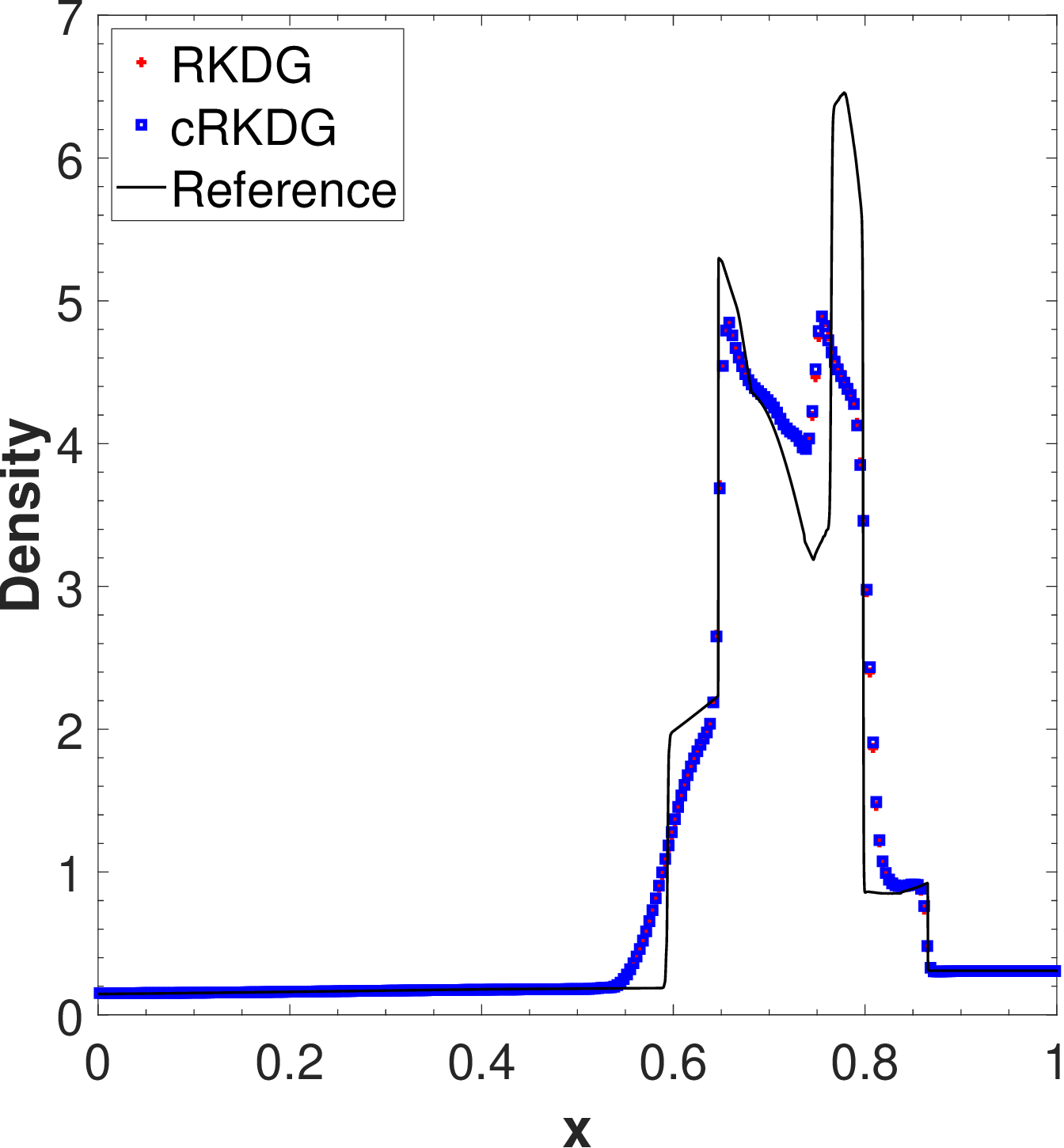}
			\caption{$k=1$}		 
      \end{subfigure}		 
		\begin{subfigure}[t]{.24  \textwidth}
			\centering			\includegraphics[trim=0cm 1cm 0cm 1cm,width=1.0 \linewidth]            {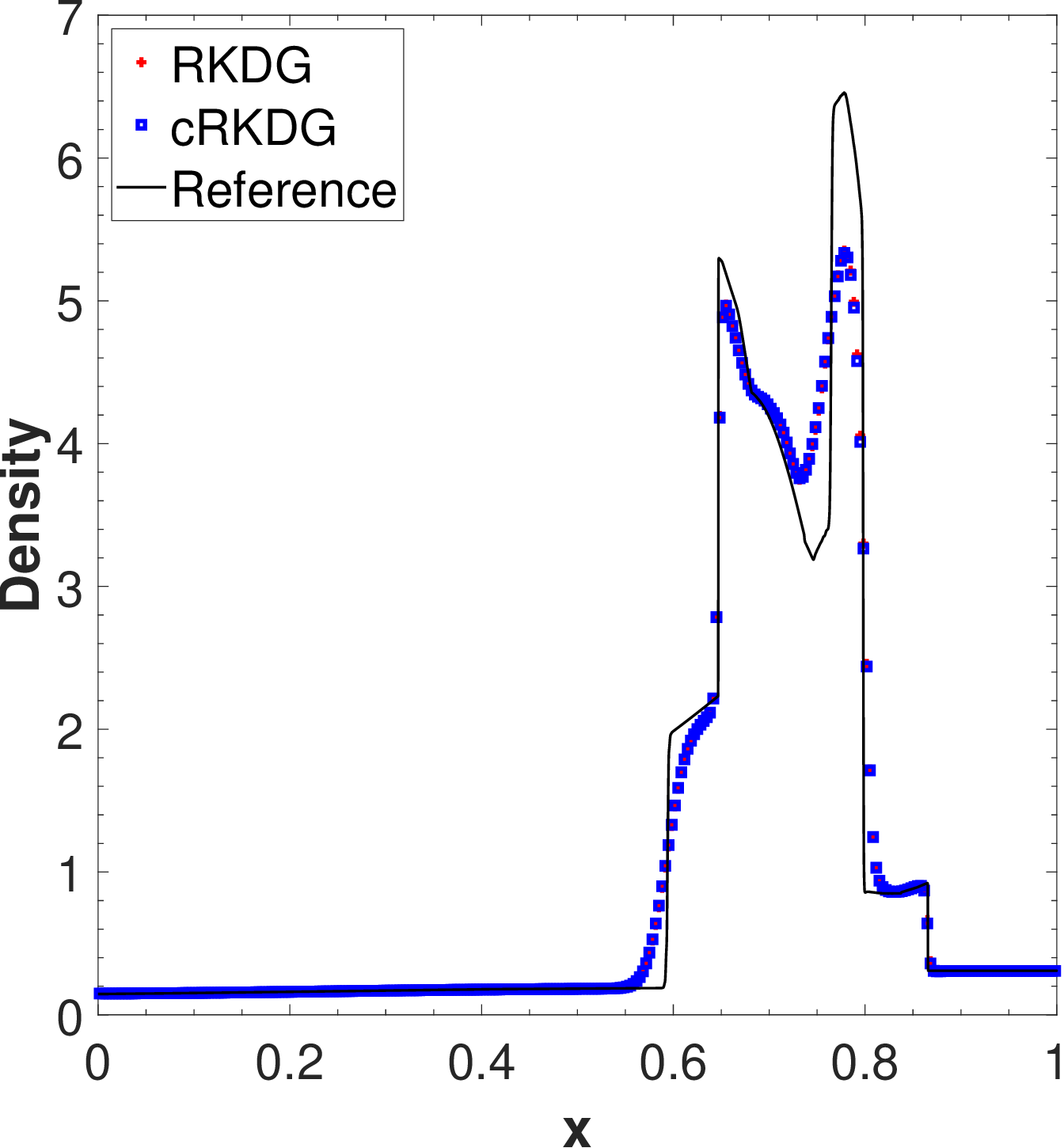}
			\caption{$k=2$}
		\end{subfigure}
		\begin{subfigure}[t]{.24 \textwidth}
			\centering			\includegraphics[trim=0cm 1cm 0cm 1cm,width=1.0 \linewidth]            {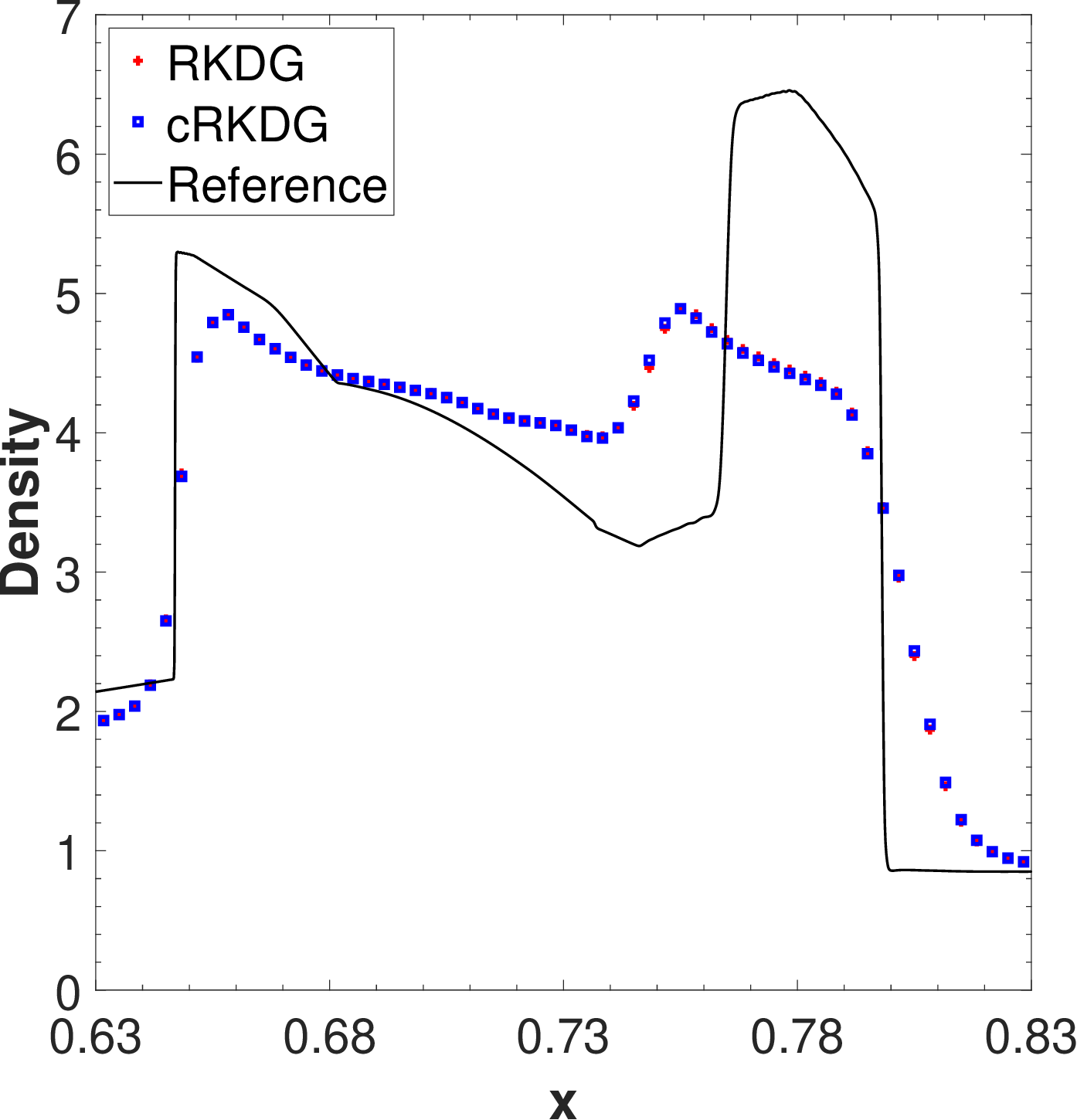}
			\caption{$k=1$ (Zoomed-in)}\label{fig:1dblastwave-zoom1}
		\end{subfigure}%
		\begin{subfigure}[t]{.24 \textwidth}
			\centering			\includegraphics[trim=0cm 1cm 0cm 1cm,width=1.0 \linewidth]   {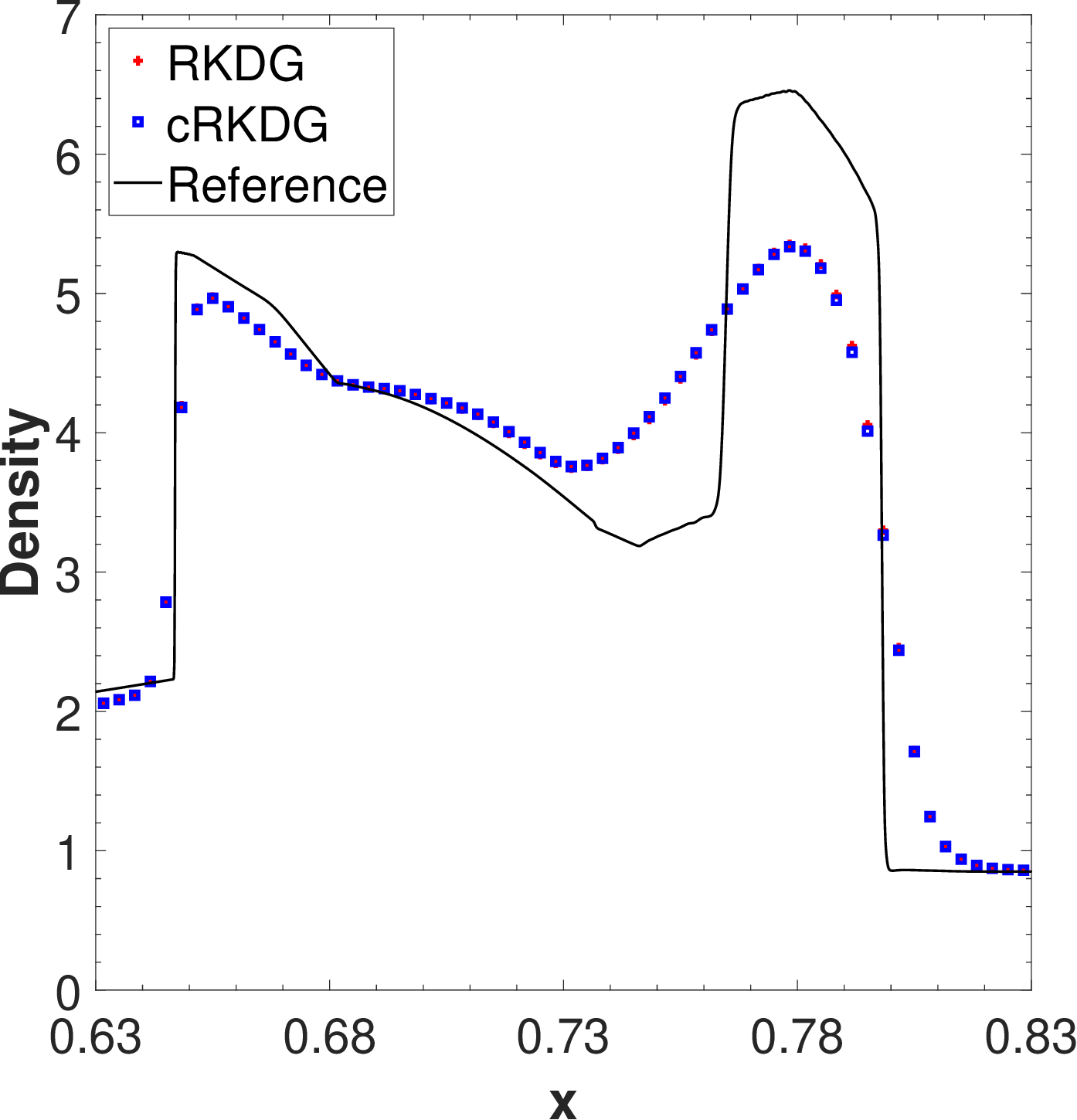}
			\caption{$k=2$ (Zoomed-in)}\label{fig:1dblastwave-zoom2}
		\end{subfigure}
		\caption{Solution profiles for the blast wave problem in \cref{ex:blast}. $N=300$ and $M=200$.  }
		\label{fig:blastwave}
	\end{figure}
\end{exmp}

\begin{exmp}[Shu--Osher problem]\label{ex:shuosher}
	We consider the Shu--Osher problem describing a Mach 3 shock interacting with sine waves in density. This is a problem of shock interaction with entropy waves  and thus contains both shocks and complex smooth region structures \cite{SHU198932}. The initial condition is set as
	\begin{equation*}
		(\rho, w, p)= \begin{cases}(3.857143,2.629369,10.333333), & x <-4, \\ (1+0.2 \sin (5 x), 0,1), & x\geq-4 .\end{cases}
	\end{equation*}
	The numerical density $\rho$ is plotted at $t=1.8$ against the reference solution which is computed by the fifth-order finite difference WENO scheme. In \cref{fig:shu-osher}, we plot the densities by cRKDG and RKDG methods with the local Lax--Friedrichs flux, WENO
limiter and TVB constant $M=200$. In addition, we also show a zoomed-in view of the solution at $x\in[0.5,2.5]$ in \cref{fig:shuosher-zoom1} and \cref{fig:shuosher-zoom2}. 
	
	\begin{figure}[h!]
	\centering
		\begin{subfigure}[t]{.24\textwidth}
			\centering
			\includegraphics[trim=0cm 1cm 0cm 1cm,width=1. \linewidth,height=1.\linewidth]{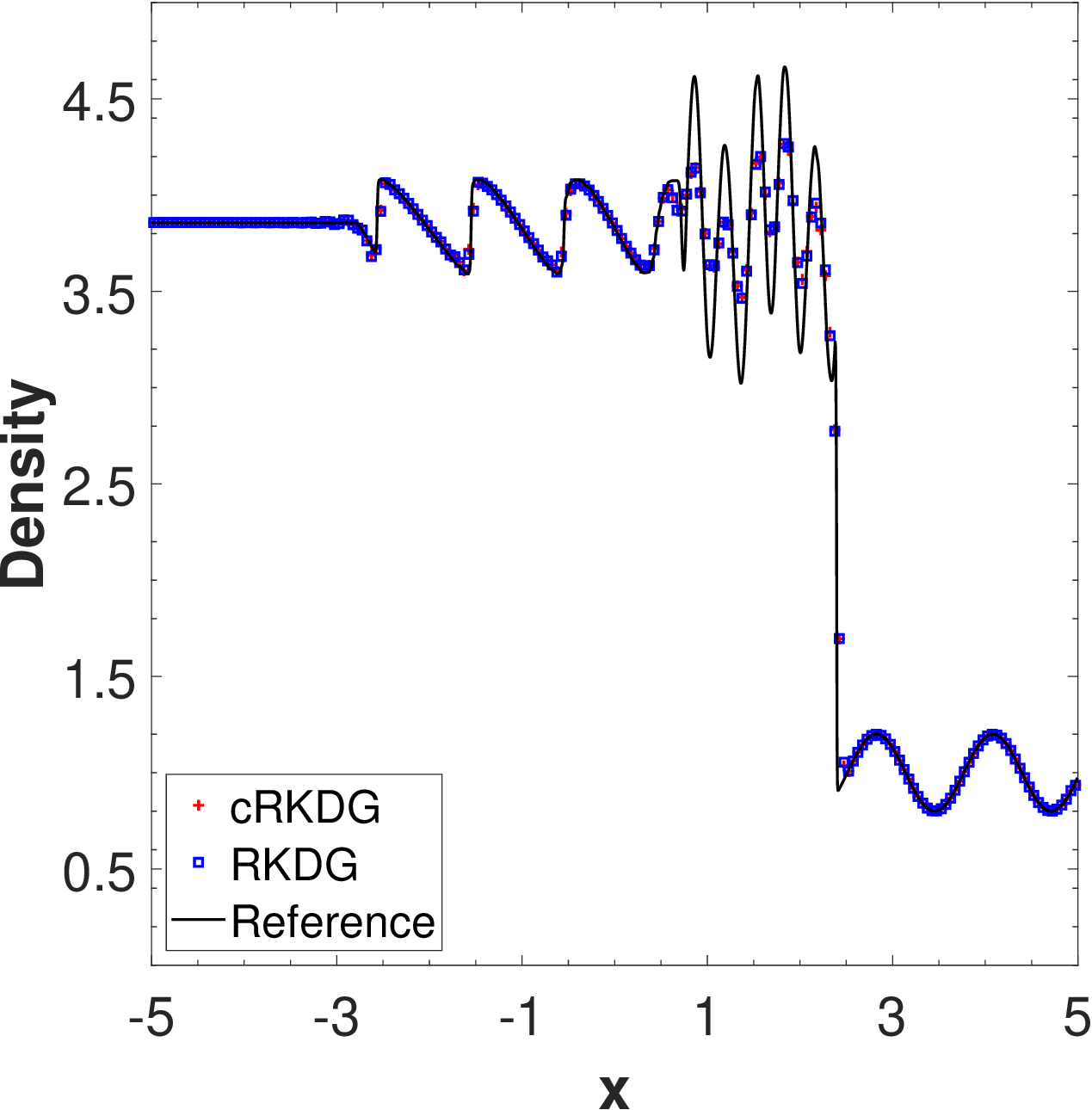}
			\caption{$k=1$}
		\end{subfigure}
		\begin{subfigure}[t]{.24\textwidth}
			\centering
			\includegraphics[trim=0cm 1cm 0cm 1cm,width=1. \linewidth,height=1.\linewidth]{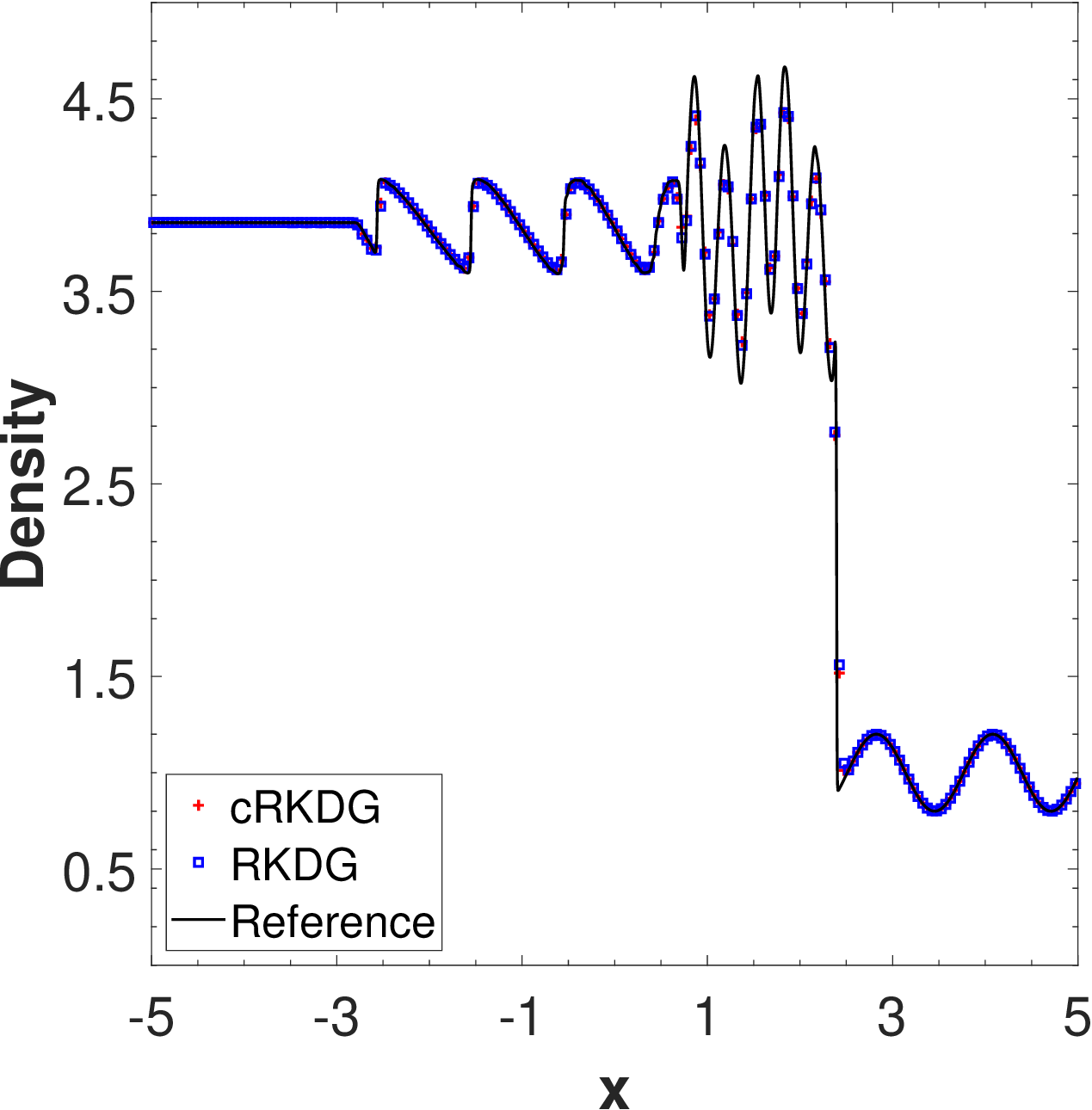}
			\caption{$k=2$}
		\end{subfigure} 
		\begin{subfigure}[t]{.24\textwidth}
			\centering
			\includegraphics[trim=0cm 1cm 0cm 1cm,width=1. \linewidth,height=1.\linewidth]{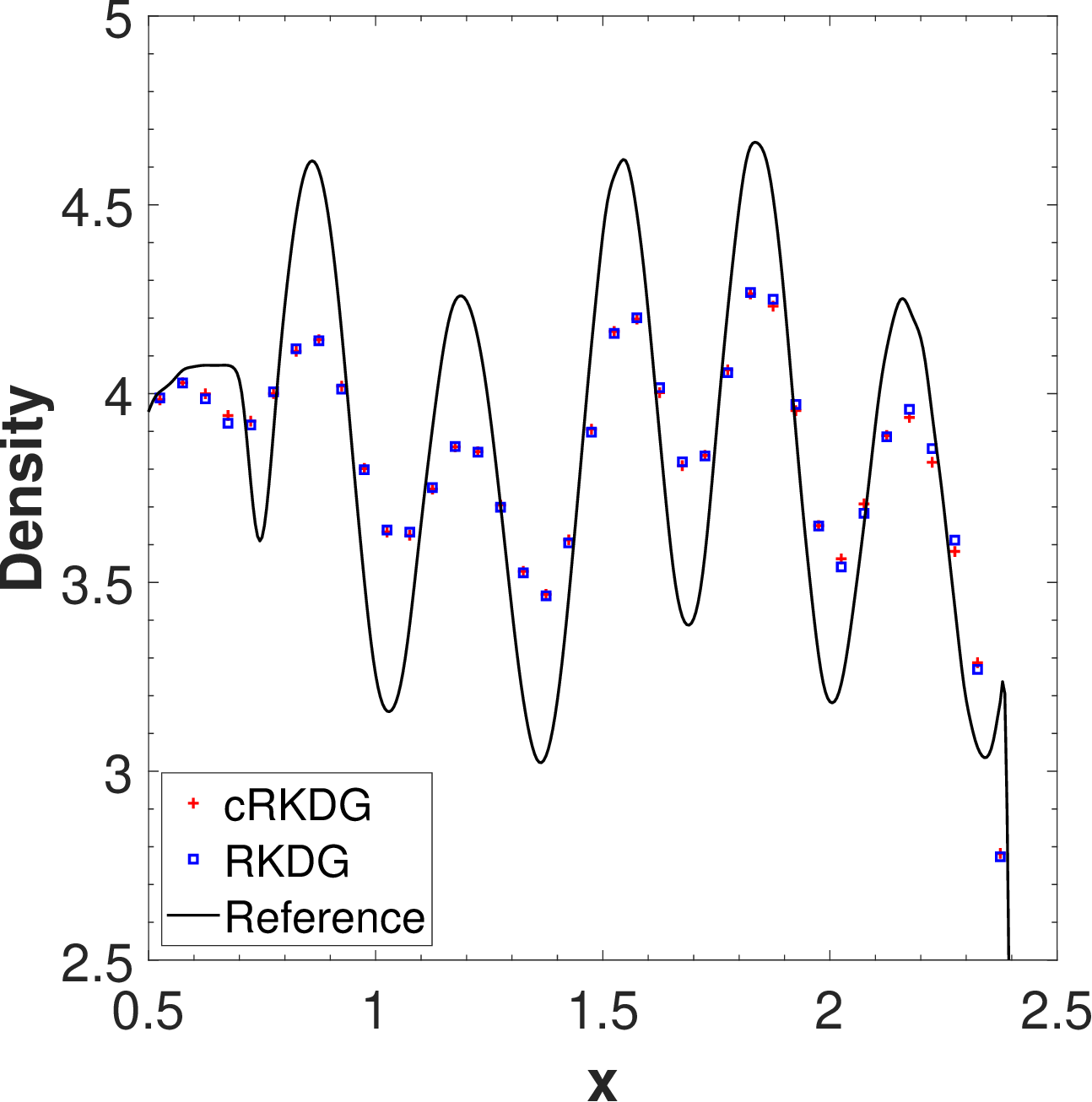}
			\caption{$k=1$ (Zoomed-in)}\label{fig:shuosher-zoom1}
		\end{subfigure}
		\begin{subfigure}[t]{.24\textwidth}
			\centering
			\includegraphics[trim=0cm 1cm 0cm 1cm,width=1. \linewidth,height=1.\linewidth]{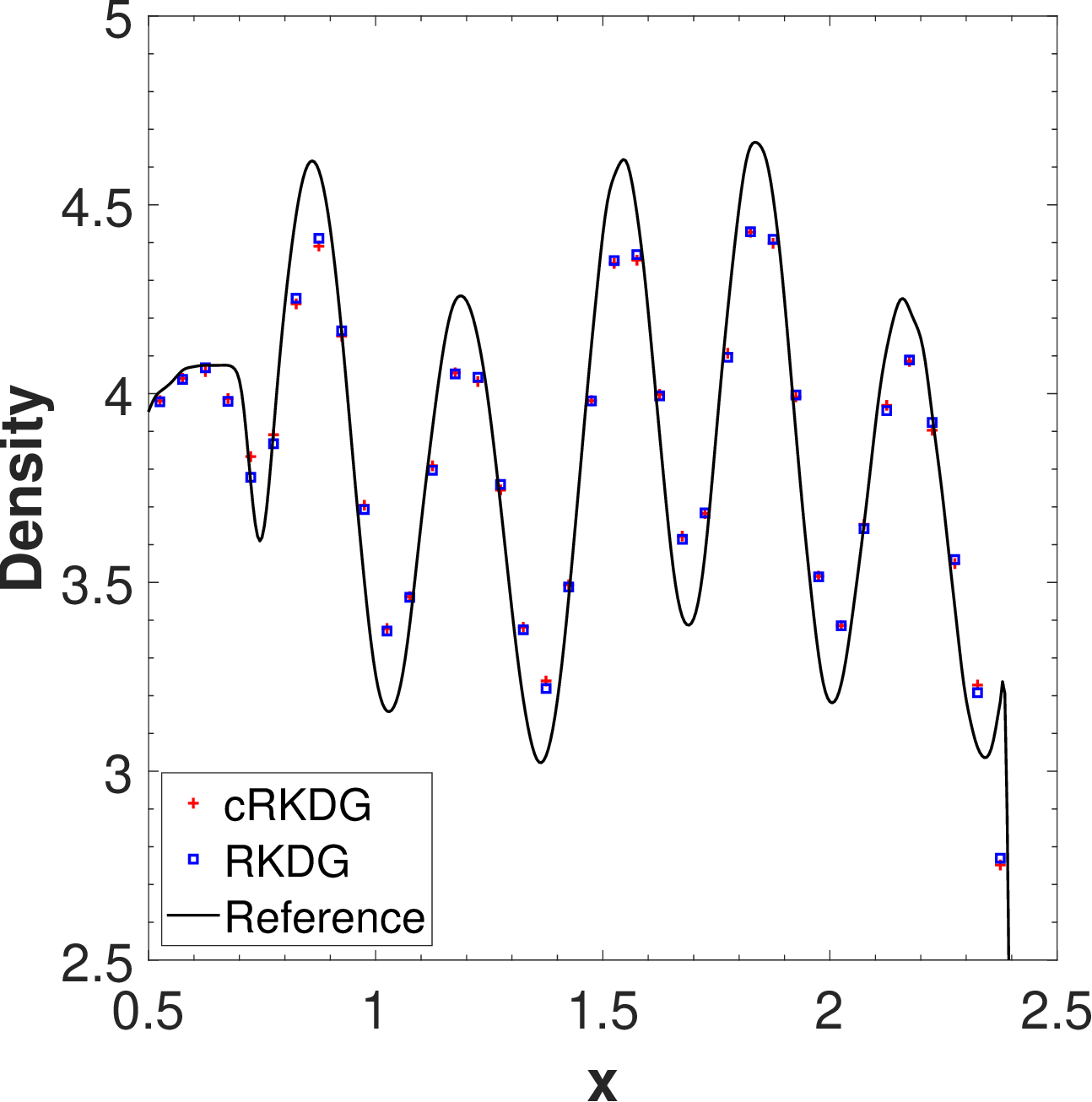}
			\caption{$k=2$ (Zoomed-in)}\label{fig:shuosher-zoom2}
		\end{subfigure} 
		\caption{Solution profiles for the Shu--Osher problem in \cref{ex:shuosher} at $t=1.8$. $M=200$ and $N=200$.}
		\label{fig:shu-osher}
	\end{figure}	
\end{exmp}

\subsubsection{Two-dimensional tests}
\begin{exmp}[Double Mach reflection]\label{ex:doublemach}
This problem is originally studied in \cite{woodward1984numerical} and describes reflections of planar shocks in the air from wedges. The computational domain is $[0,4] \times[0,1]$, and the reflecting wall lies at the bottom boundary, starting from $x={1}/{6}$. Therefore, for the bottom boundary, the exact post-shock condition is imposed for the region from $x=0$ to $x={1}/{6}$, while a reflective boundary condition is applied to the rest. At $t=0$, a right-moving Mach 10 shock is positioned at $x={1}/{6}$, $y=0$ and makes a $60^{\circ}$ angle with the $x$-axis.  At the top boundary, the flow values are set to describe the exact motion of the Mach 10 shock. The boundary conditions at the left and the right are inflow and outflow respectively. We compute the solution up to $t = 0.2$ and use the TVB limiter with $M = 50$. To save space, we only present the simulation results with $480\times 120$ mesh cells for $k = 1$ and $1960\times480$ mesh cells for $k = 1,2$ in \cref{fig:doublemach-big}. The corresponding zoomed-in figures around the double Mach stem are given in \cref{fig:doublemach-small}. For this problem, the resolutions of cRKDG and RKDG methods are comparable for the same order of accuracy and the same meshes. 
	\begin{figure}[h!]
	\centering
 \begin{subfigure}[t]{0.49\textwidth}
			\centering
			\includegraphics[trim=1cm 0cm 3cm 0cm, width=\textwidth]{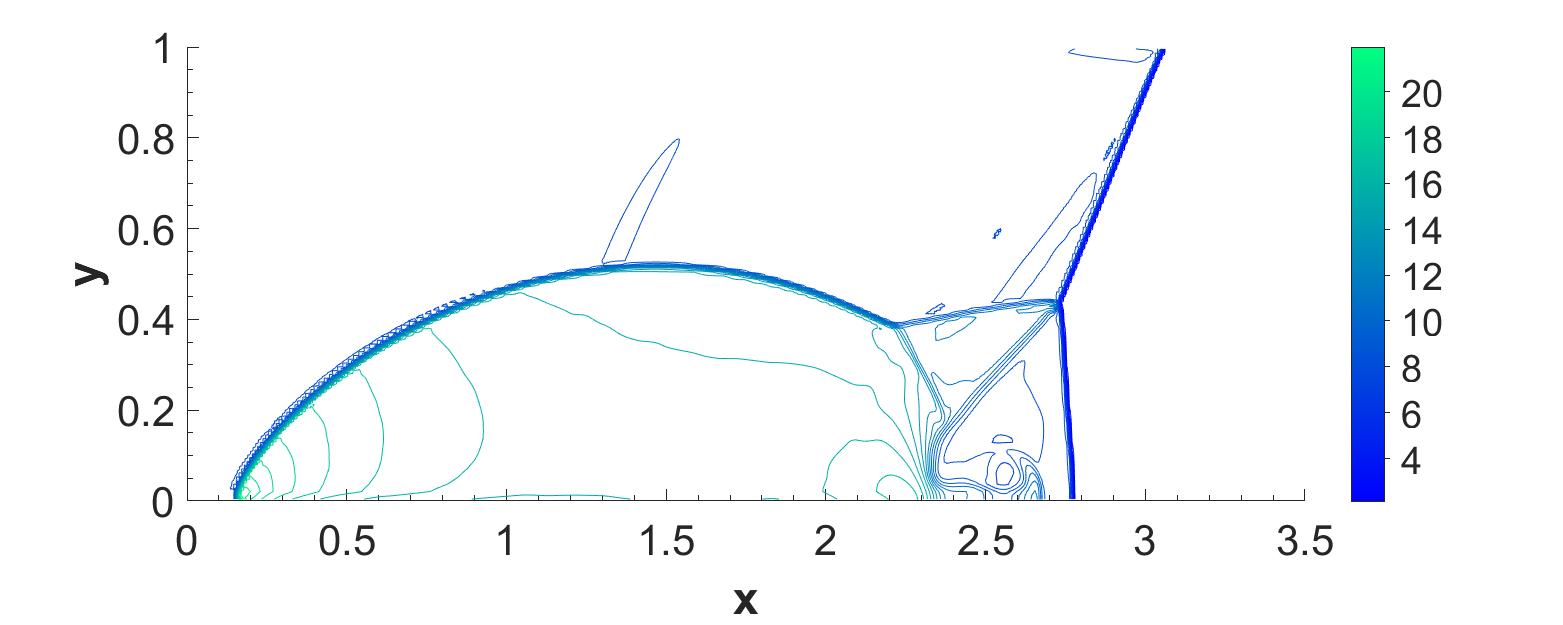}
			\caption{$k=1$, cRKDG, $480\times 120$ mesh}
		\end{subfigure}		 
		\begin{subfigure}[t]{0.49\textwidth}
			\centering
			\includegraphics[trim=1cm 0cm 3cm 0cm, width=\textwidth]{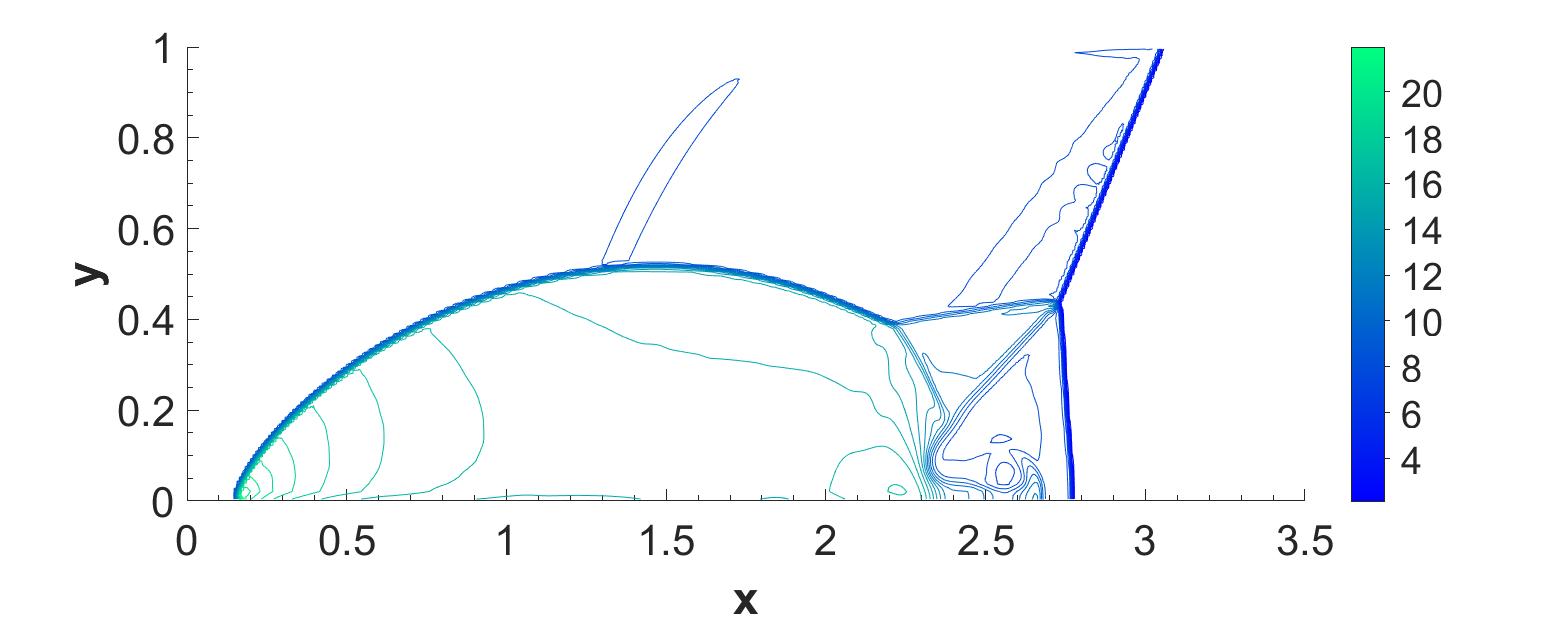}
			\caption{$k=1$, RKDG, $480\times 120$ mesh}
		\end{subfigure}
  \\
		\begin{subfigure}[t]{0.49\textwidth}
			\centering
			\includegraphics[trim=1cm 0cm 3cm 0cm, width=\textwidth]{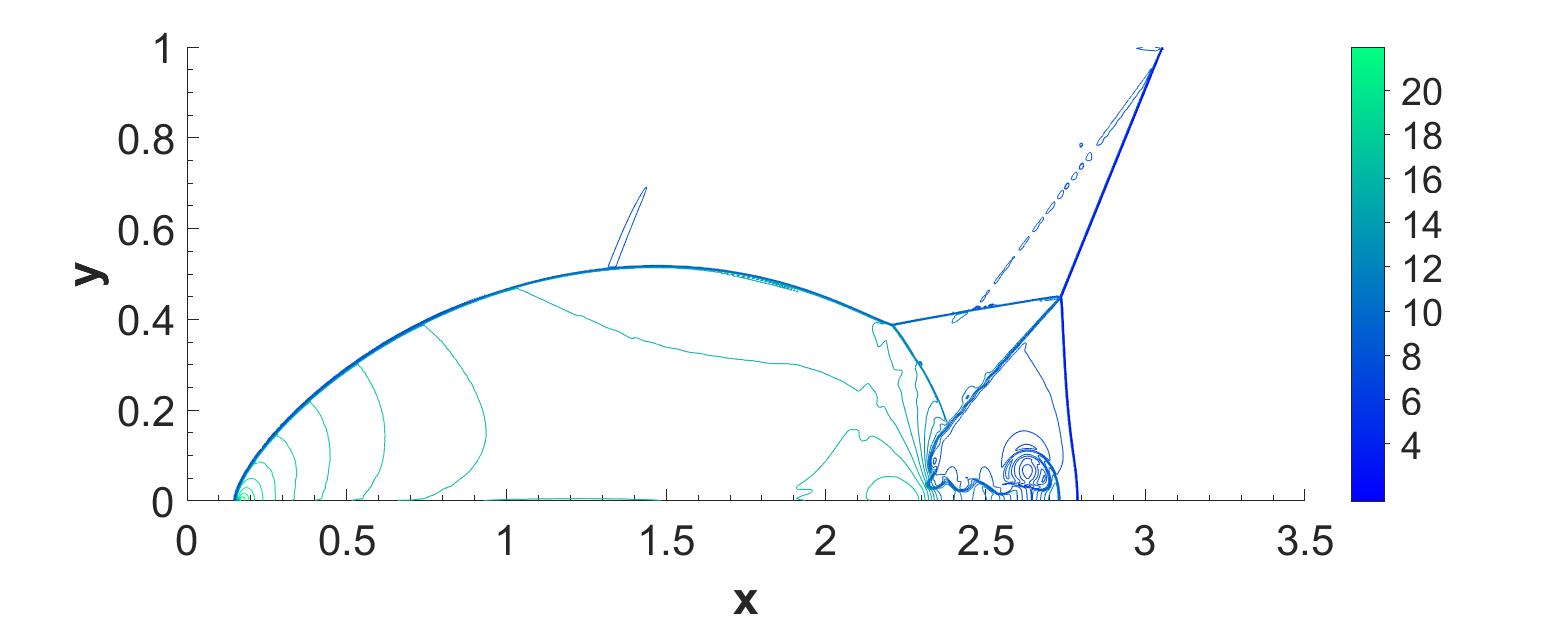}
			\caption{$k=1$, cRKDG, $1920\times 480$ mesh}
		\end{subfigure}		
		\begin{subfigure}[t]{0.49\textwidth}
			\centering		\includegraphics[trim=1cm 0cm 3cm 0cm, width=\textwidth]{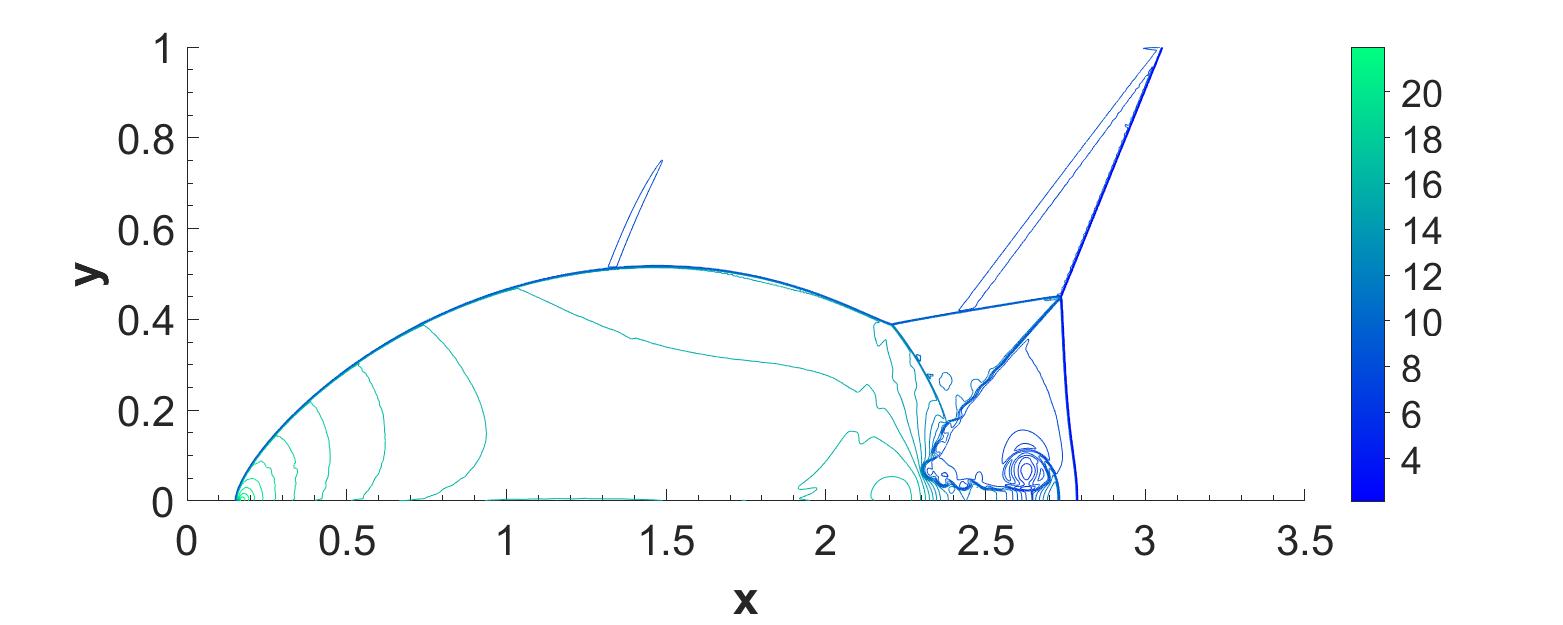}
			\caption{$k=1$, RKDG, $1920\times 480$ mesh}
		\end{subfigure}
  \\
  \begin{subfigure}[t]{0.49\textwidth}
			\centering
			\includegraphics[trim=1cm 0cm 3cm 0cm, width=\textwidth] {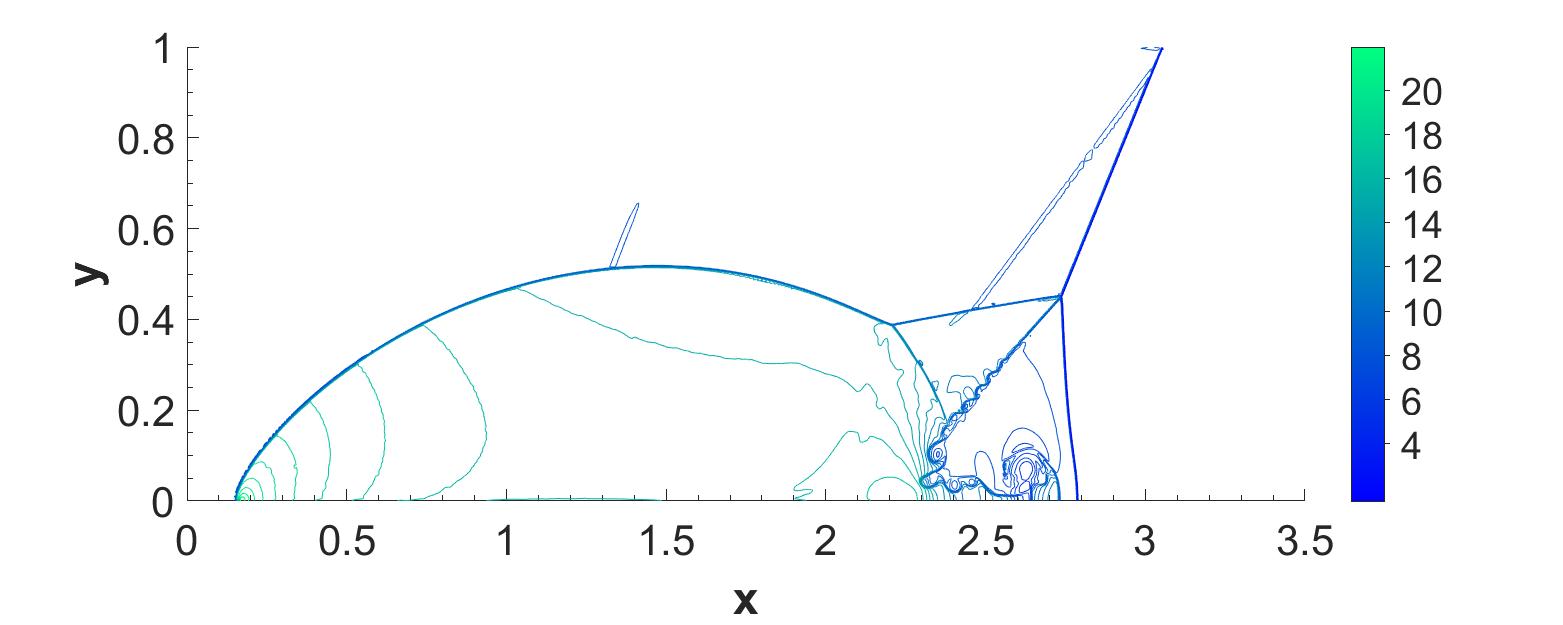}
			\caption{$k=2$, cRKDG, $1920\times 480$ mesh}
		\end{subfigure} 
		\begin{subfigure}[t]{0.49\textwidth}
			\centering
			\includegraphics[trim=1cm 0cm 3cm 0cm, width=\textwidth]{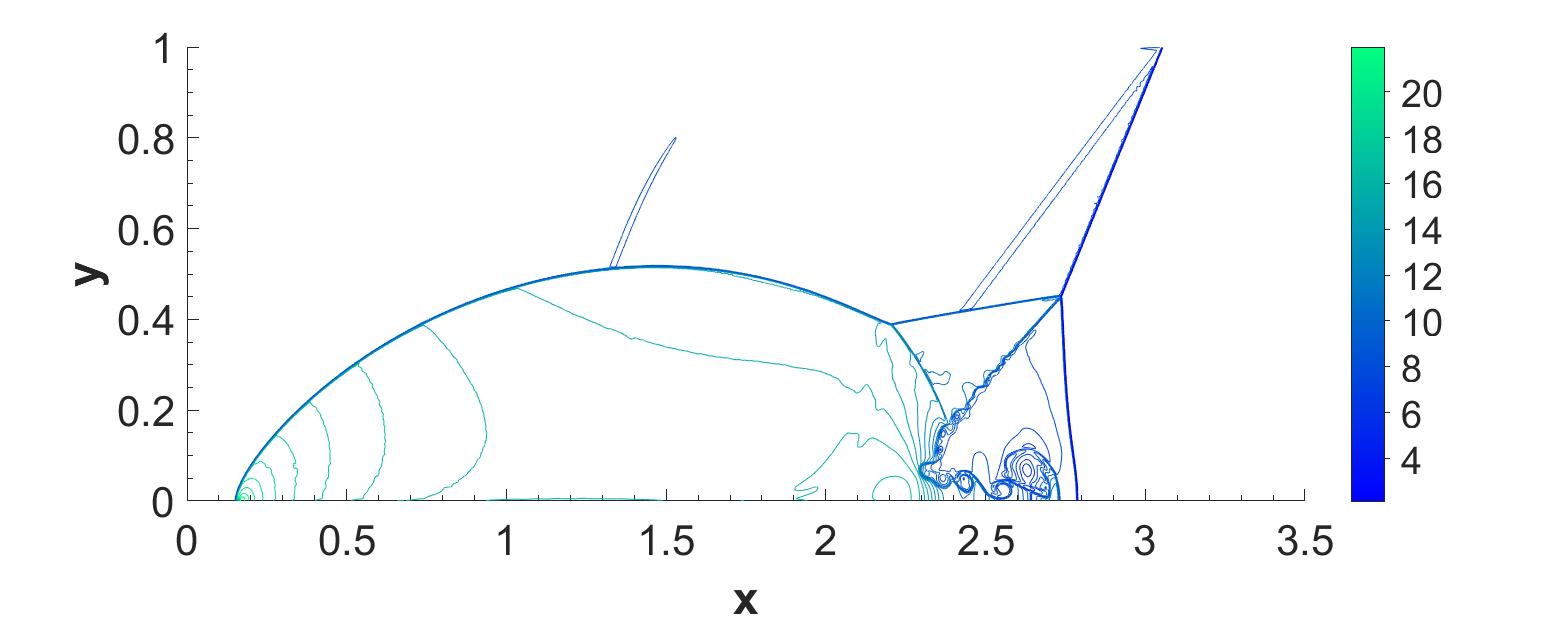}
			\caption{$k=2$, RKDG, $1920\times 480$ mesh}
		\end{subfigure} 
		\caption{Solution profiles for the double Mach problem in \cref{ex:doublemach} at $t=0.2$ with $M=50$. $30$ equally spaced density contours from $1.3695$ to $ 22.682$ are displaced.}
		\label{fig:doublemach-big}
	\end{figure}

	\begin{figure}[h!]
	\centering
 \begin{subfigure}[t]{.4\textwidth}
			\includegraphics[width=\textwidth,height=0.7\linewidth]{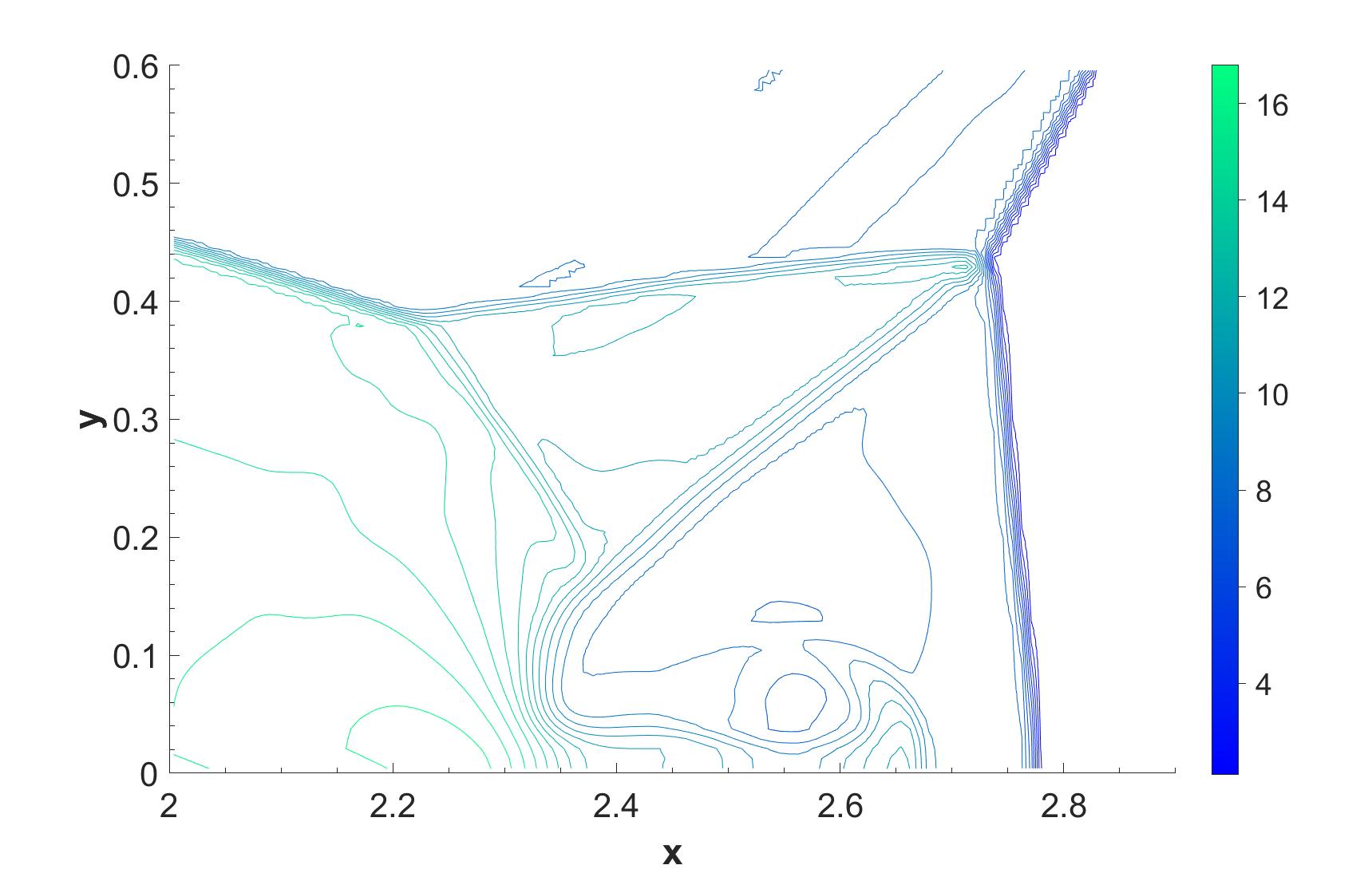}
			\caption{$k=1$, cRKDG, $480\times 120$ mesh}
		\end{subfigure}  
  \begin{subfigure}[t]{.4\textwidth}			\includegraphics[width=\textwidth,height=0.7\linewidth]{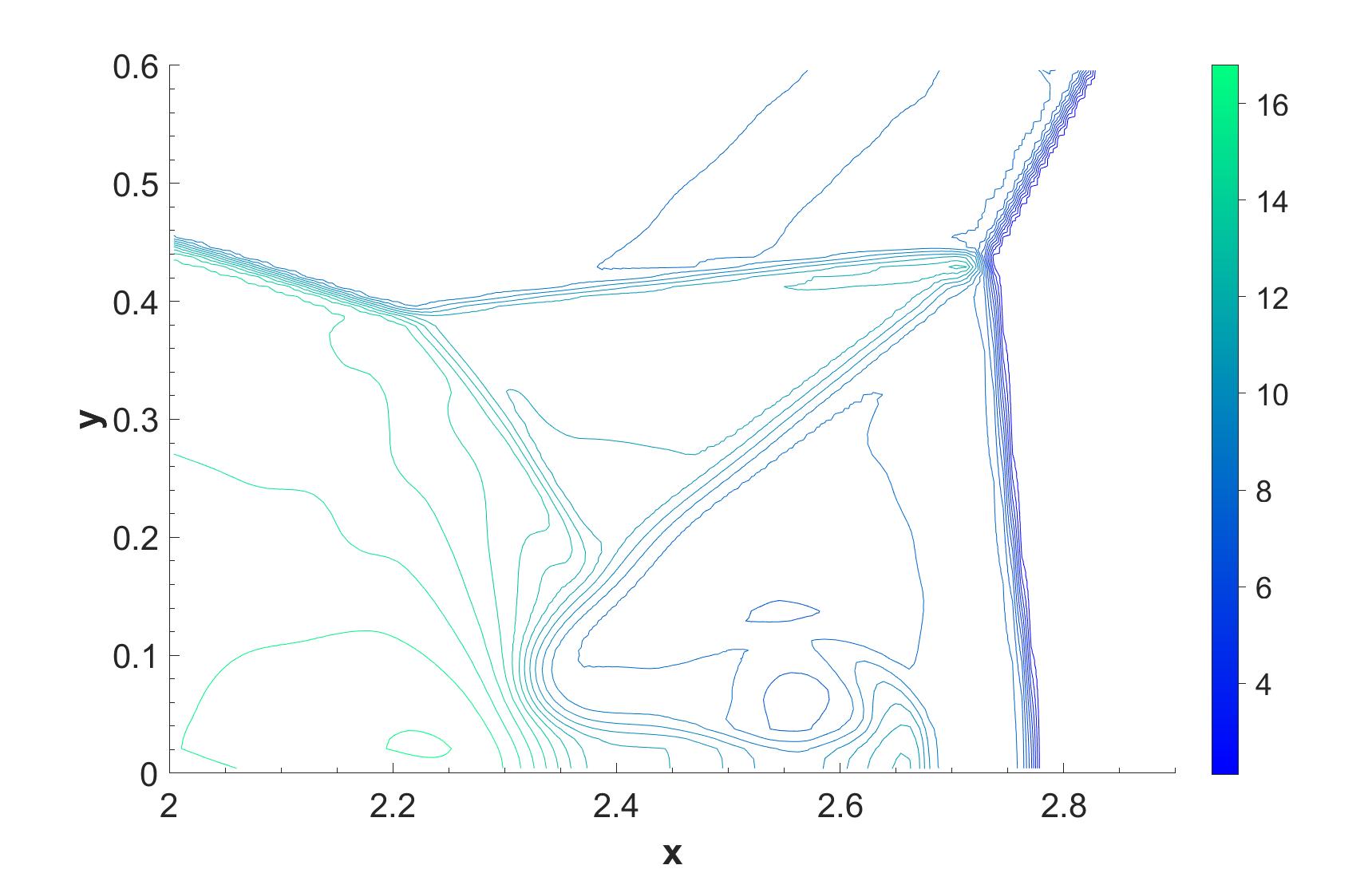}
			\caption{$k=1$, RKDG, $480\times 120$ mesh}
		\end{subfigure}
  \\
		\begin{subfigure}[t]{.4\textwidth}			\includegraphics[width=\textwidth,height=0.7\linewidth]{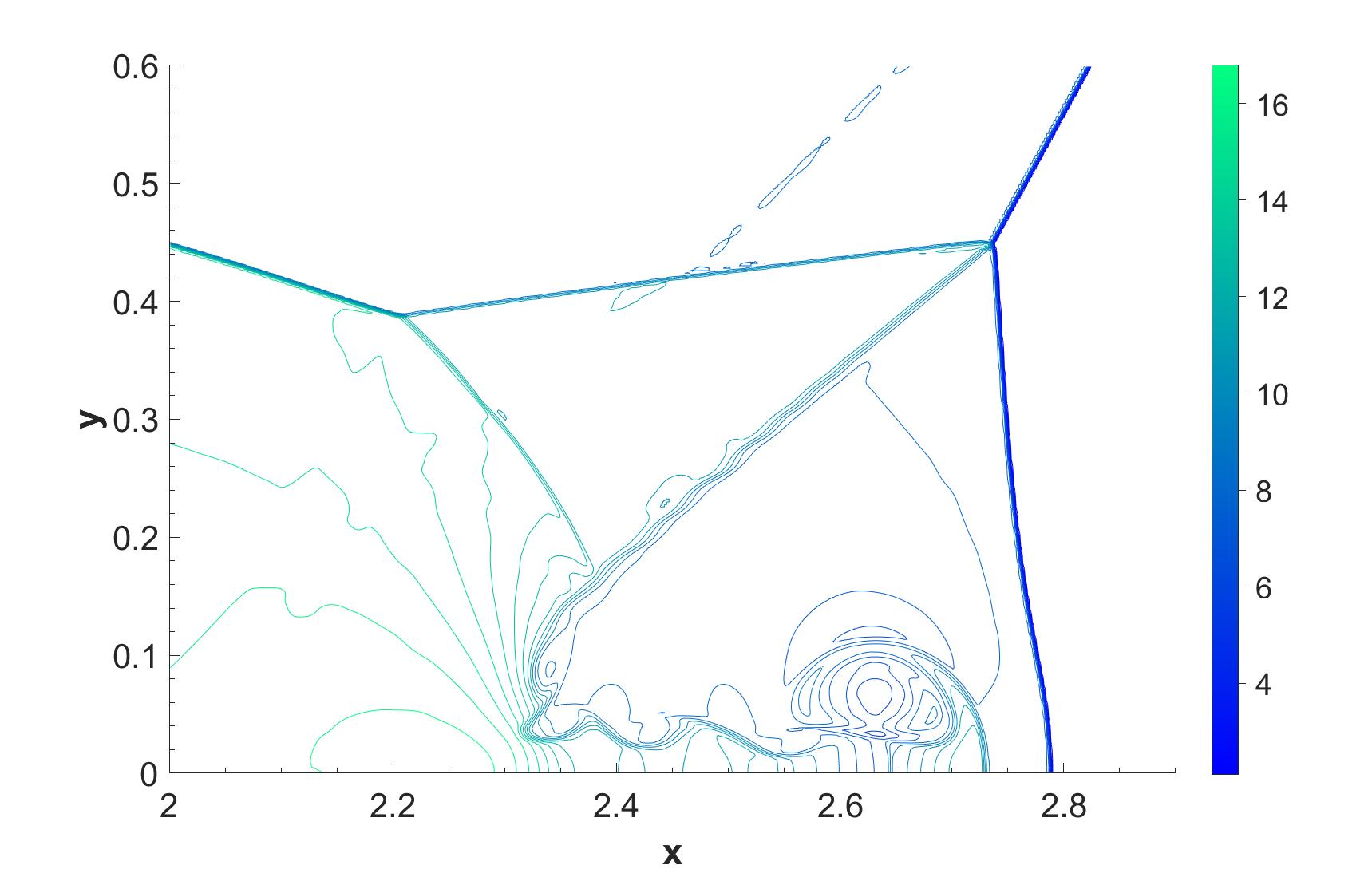}
			\caption{$k=1$, cRKDG, $1920\times 480$ mesh}
		\end{subfigure}
  \begin{subfigure}[t]{.4\textwidth}
			\includegraphics[width=\textwidth,height=0.7\linewidth]{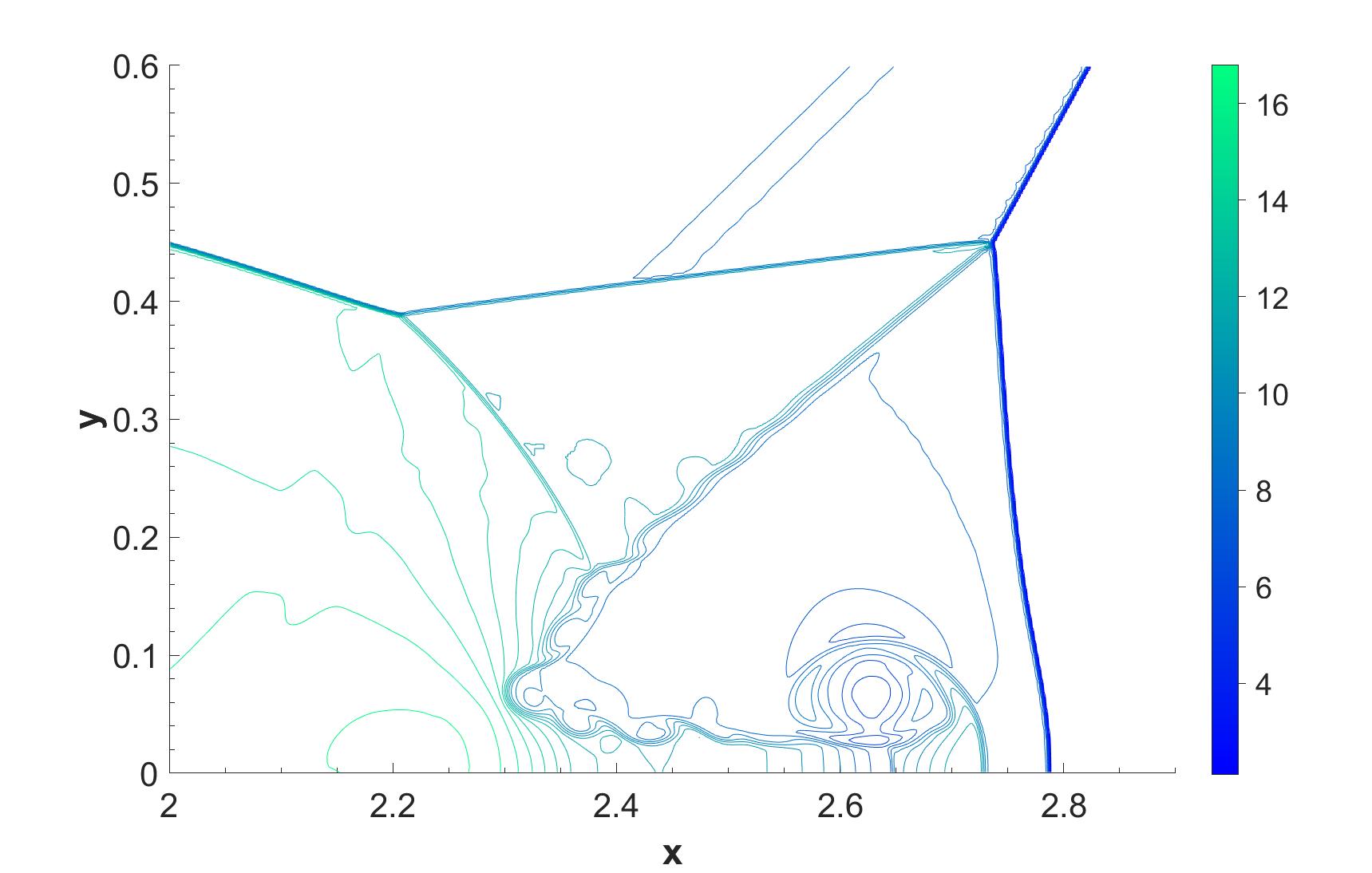}
			\caption{$k=1$, RKDG, $1920\times 480$ mesh}
		\end{subfigure}  
		\begin{subfigure}[t]{.4\textwidth}
			\includegraphics[width=\textwidth,height=0.7\linewidth]{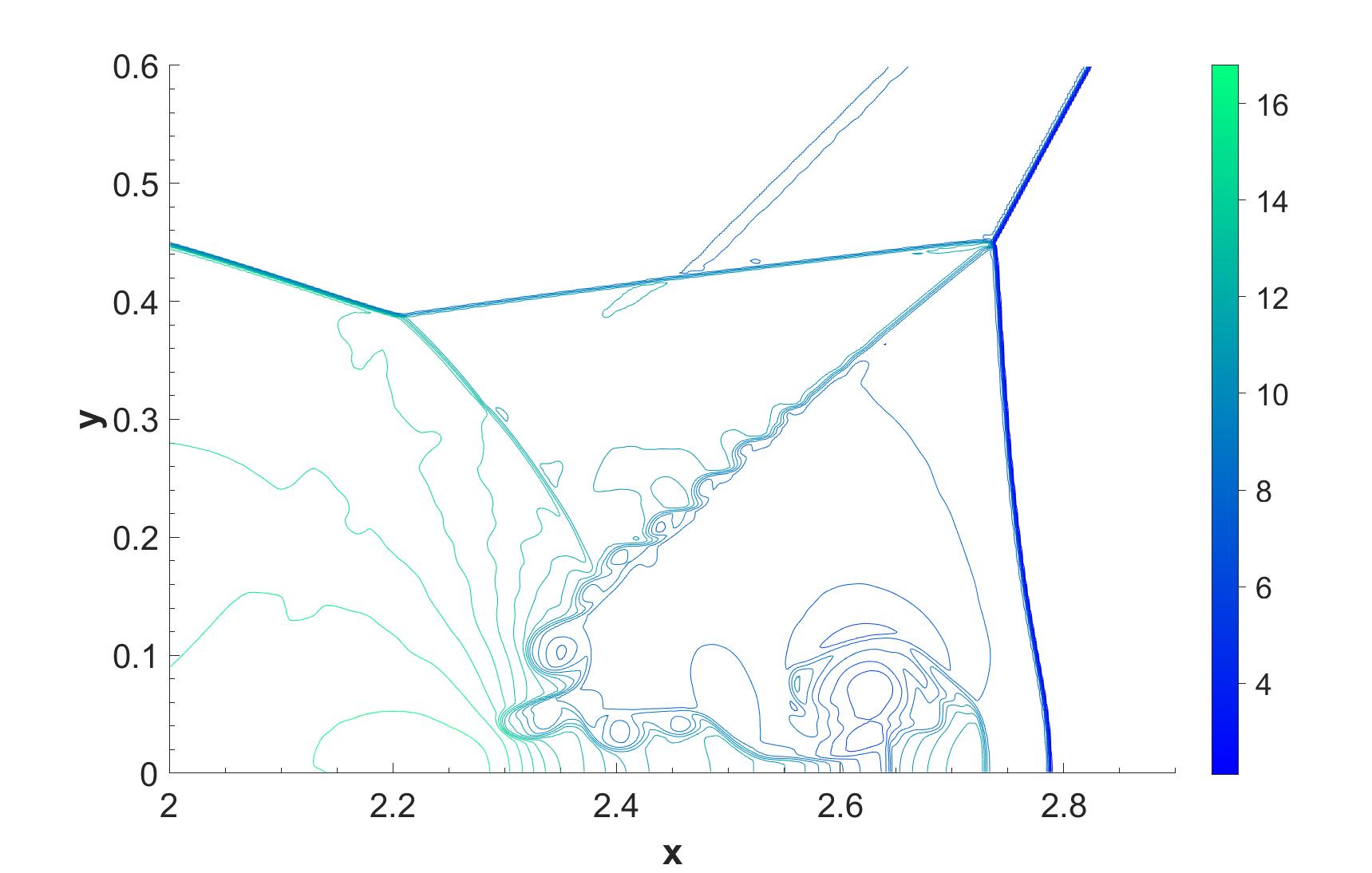}
			\caption{$k=2$, cRKDG, $1920\times 480$ mesh}
		\end{subfigure}
			\begin{subfigure}[t]{.4\textwidth}
			\includegraphics[width=\textwidth,height=0.7\linewidth]{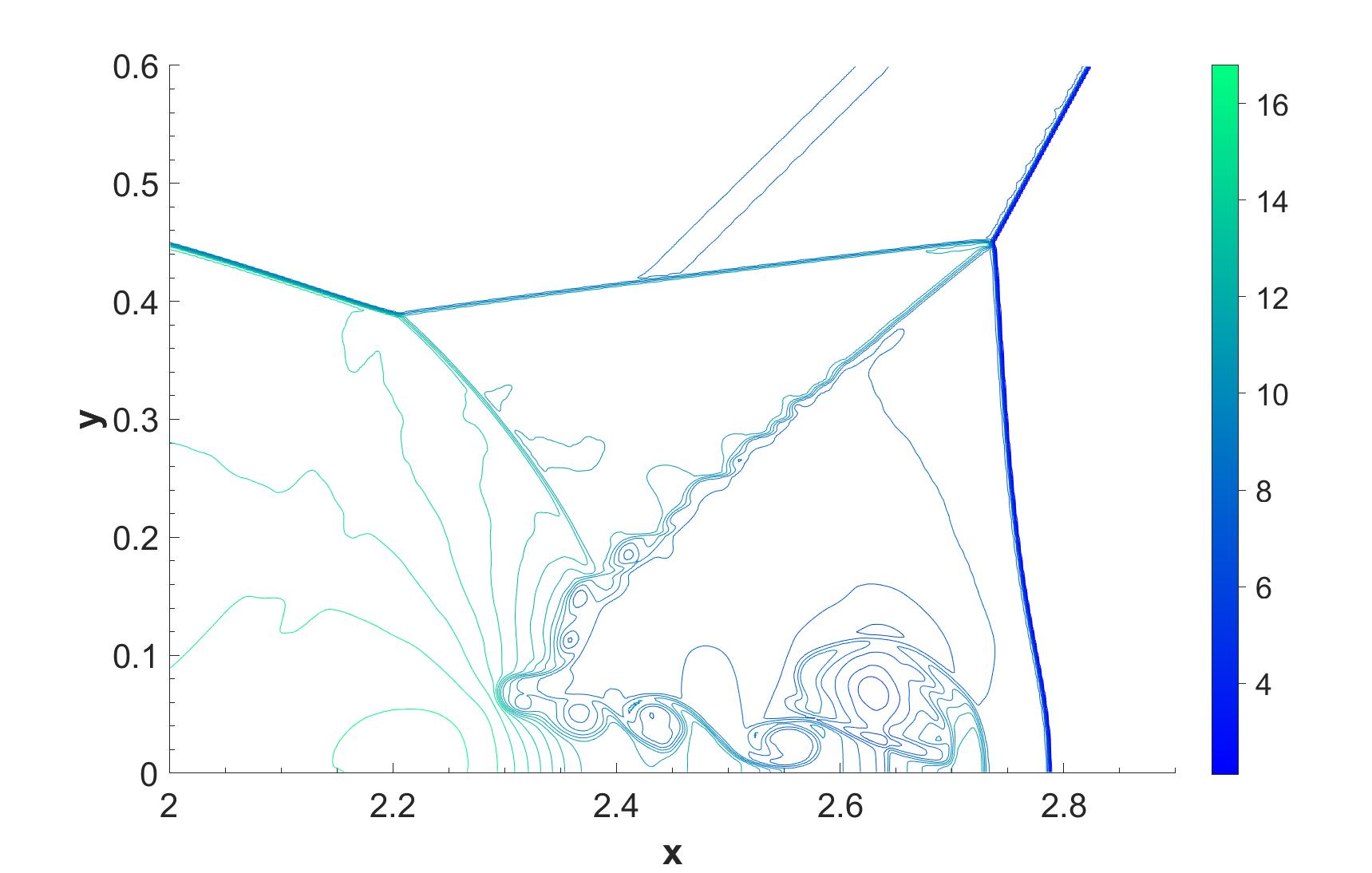}
			\caption{$k=2$, RKDG, $1920\times 480$ mesh}
		\end{subfigure}
		\caption{Zoomed-in solutions for the double Mach problem in \cref{ex:doublemach} at $t=0.2$ with $M=50$. $30$ equally spaced density contours from $1.3695$ to $22.682$ are displaced.}
		\label{fig:doublemach-small}
	\end{figure}
\end{exmp}

\begin{exmp}[Forward facing step]\label{ex:forward-step}
This is another classical test studied in \cite{woodward1984numerical}. In this test, a Mach 3 uniform flow travels to the right and enters a wind tunnel (of 1 length unit wide and 3 length units long), with the step of 0.2 length units high located 0.6 length units from the left-hand end of the tunnel. Reflective boundary conditions are applied along the wall of the tunnel, while inflow/outflow boundary conditions are applied at the entrance/exit. At the corner of the step, a singularity is present. Unlike in \cite{woodward1984numerical}, we do not modify our schemes or refine the mesh near the corner in order to test the performance of our schemes in handling such singularity. We compute the solution up to $t = 4$ and utilize the TVB limiter with a TVB constant $M = 50$. Due to the space limitation, we only present the simulation results with $240\times 80$ mesh cells for $k = 1$ and $960\times320$ mesh cells for $k = 1,2$ in \cref{fig:forward-step}. For this problem, the resolutions of cRKDG and RKDG methods are comparable for the same order of accuracy and mesh.

\begin{figure}[h!]
	\centering
 \begin{subfigure}[t]{.49\textwidth}
			\includegraphics[trim=1cm 0cm 1cm 0cm, width=\textwidth]
                {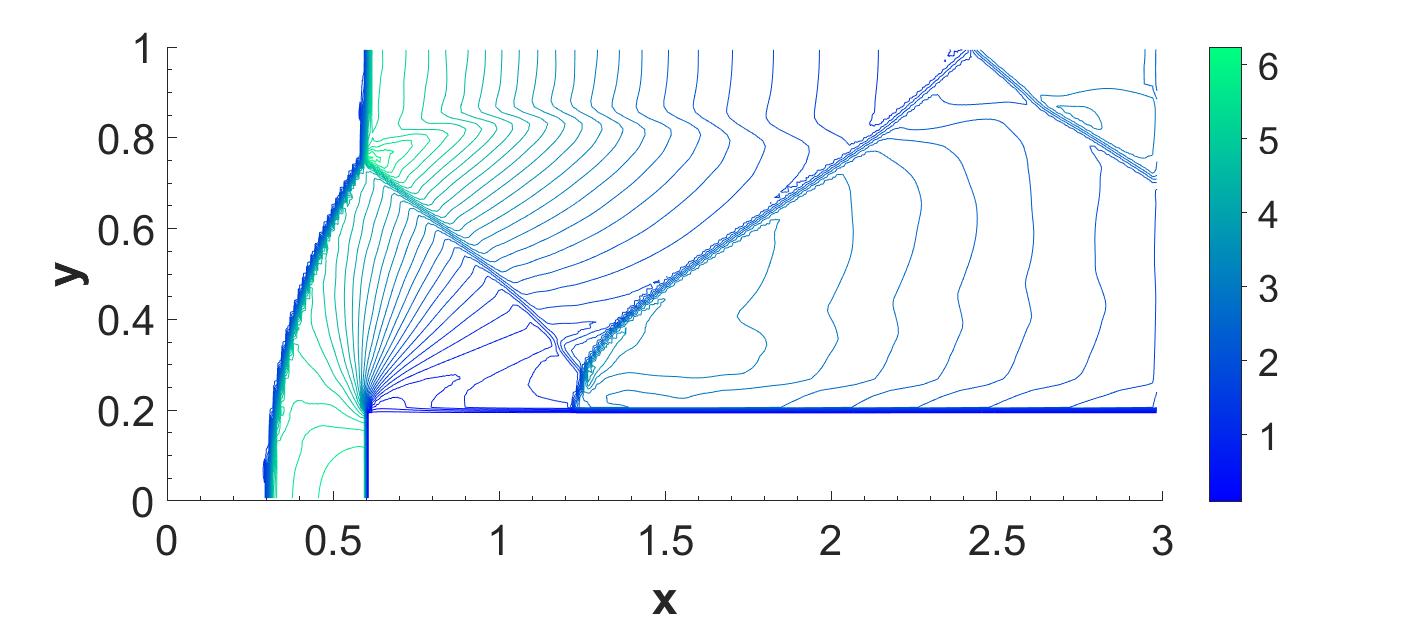}
			\caption{$k=1$, cRKDG, $240\times 80$ mesh}
		\end{subfigure}
		\begin{subfigure}[t]{.49\textwidth}
			\includegraphics[trim=1cm 0cm 1cm 0cm, width=\textwidth]
                {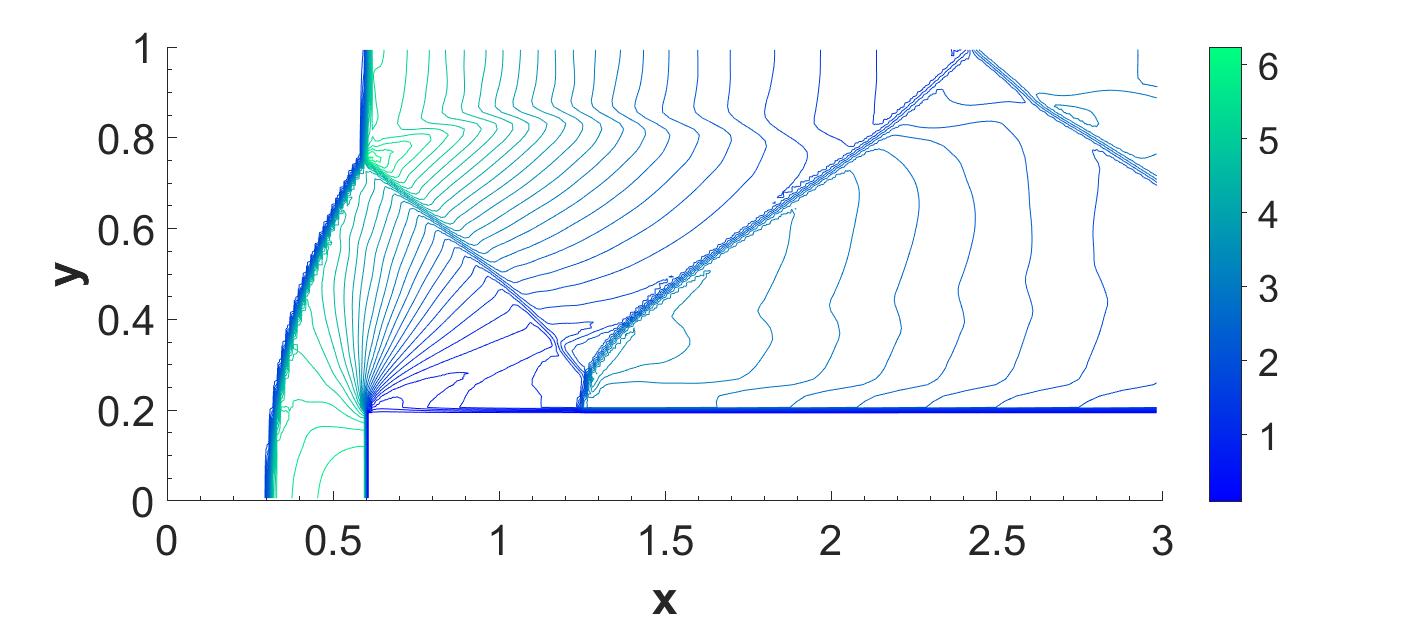}
			\caption{$k=1$, RKDG, $240\times 80$ mesh}
		\end{subfigure}

		\begin{subfigure}[t]{.49\textwidth}
			\includegraphics[trim=1cm 0cm 1cm 0cm, width=\textwidth]
                {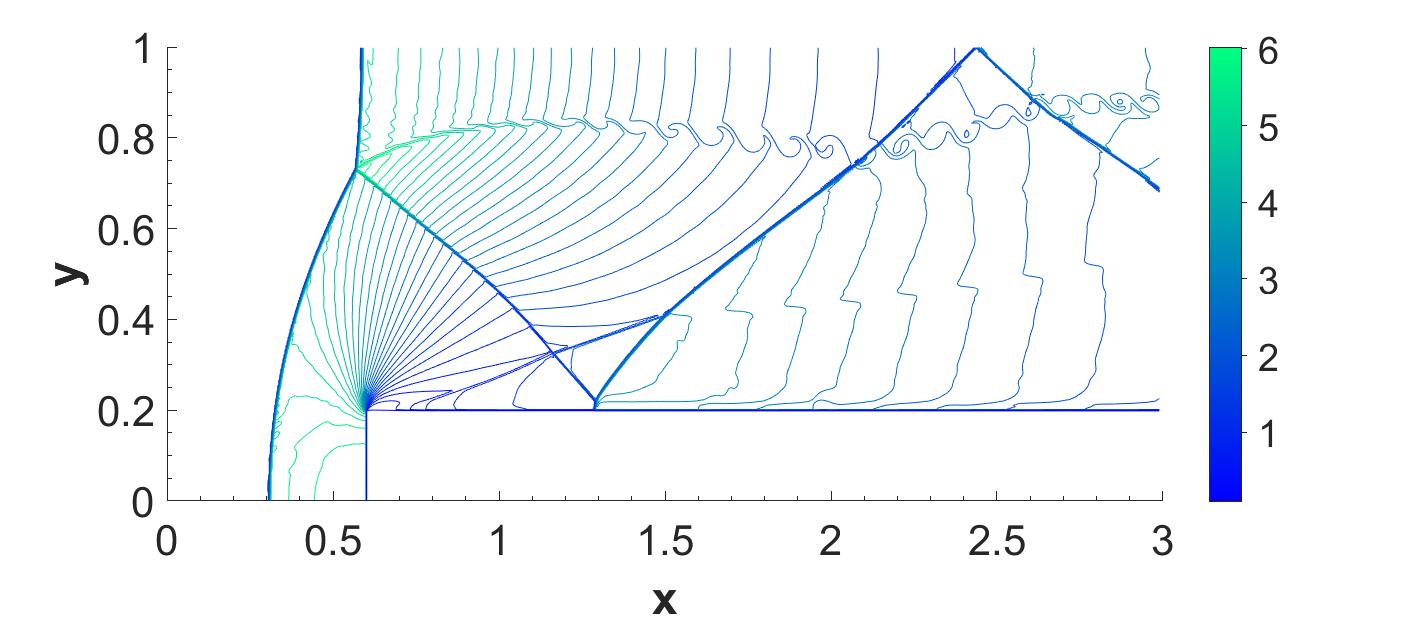}
			\caption{$k=1$, cRKDG, $960\times 320$ mesh}
		\end{subfigure}
		\begin{subfigure}[t]{.49\textwidth}
			\includegraphics[trim=1cm 0cm 1cm 0cm, width=\textwidth]
                {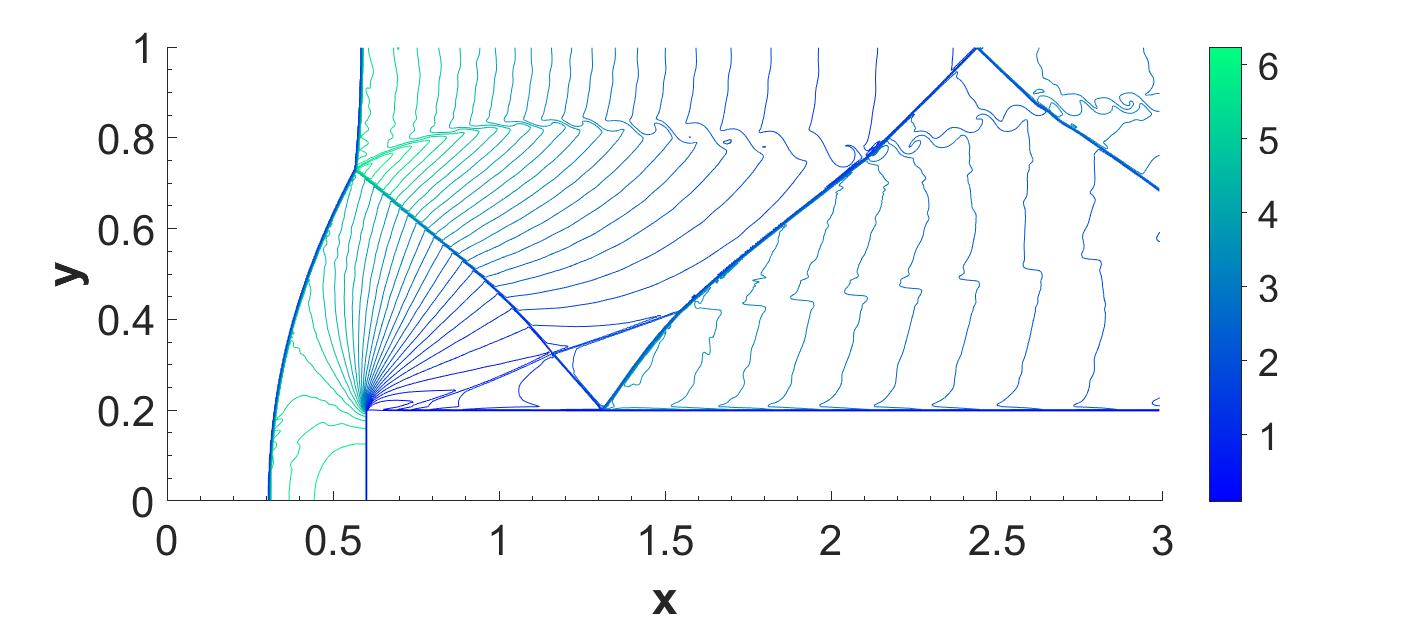}
			\caption{$k=1$, RKDG, $960\times 320$ mesh}
		\end{subfigure}
      \begin{subfigure}[t]{.49\textwidth}
			\includegraphics[trim=1cm 0cm 1cm 0cm, width=\textwidth]
                {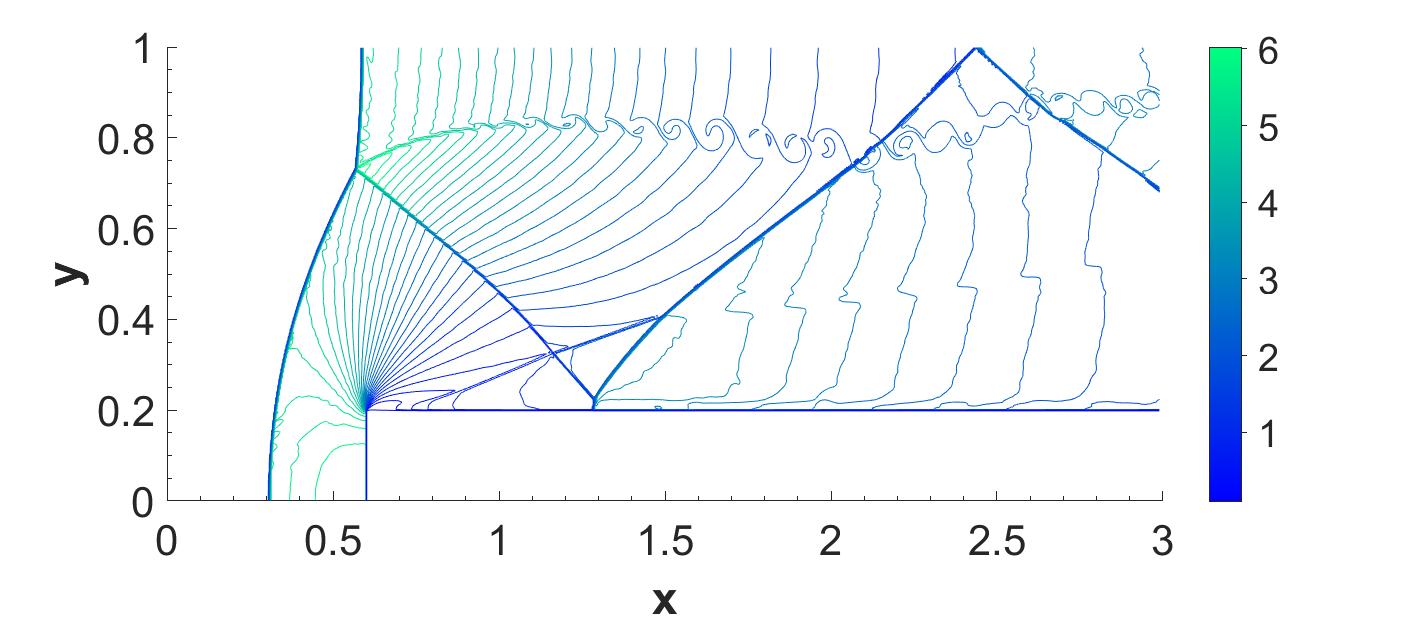}
			\caption{$k=2$, cRKDG, $960\times 320$ mesh}
		\end{subfigure}
		\begin{subfigure}[t]{.49\textwidth}
			\includegraphics[trim=1cm 0cm 1cm 0cm, width=\textwidth]
                {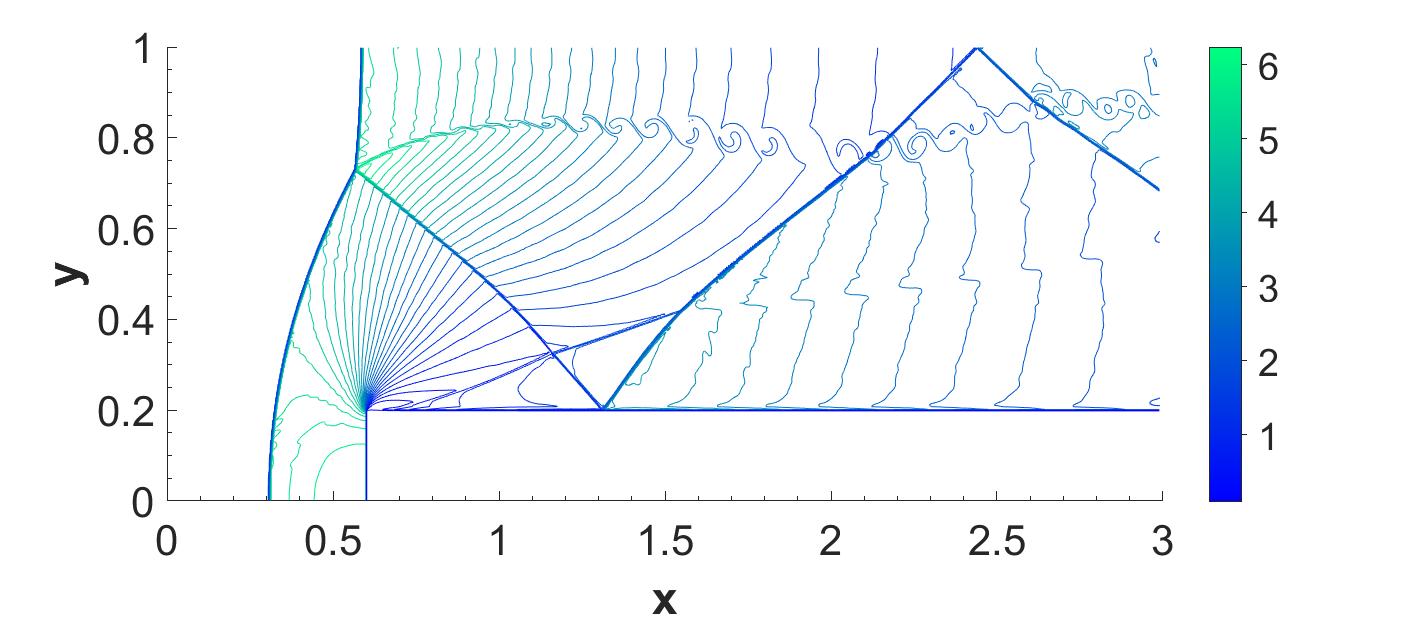}
			\caption{$k=2$, RKDG, $960\times 320$ mesh}
		\end{subfigure}
		\caption{Solution profiles for the forward step problem in \cref{ex:forward-step} at $t=4$ with $M=50$. $30$ equally spaced density contours from $0.090388$ to $6.2365$ are displaced. } 
		\label{fig:forward-step}
	\end{figure}
\end{exmp}

\section{Conclusions and future work}\label{sec:conclusions}
In this paper, we present a novel class of RKDG methods with compact stencils for solving the hyperbolic conservation laws. Our main idea is to replace the DG operator in the inner temporal stages of the fully discrete RKDG scheme by a local derivative operator. We prove a Lax--Wendroff type theorem which guarantees its convergence to the weak solution. Numerically, we observe the new method achieves the optimal convergence rate and does not suffer from the order degeneracy when the Dirichlet type inflow boundary condition is imposed. Moreover, the connections of the new method with the LWDG and ADER-DG methods are established. This is the first paper of a few of our future works, which include the rigorous stability and error analysis with the energy method, extensions to implicit time stepping and convection-dominated problems, and the design of structure-preserving schemes based on the cRKDG methods, etc.

\section*{Acknowledgments}
Z. Sun was partially supported by the NSF grant DMS-2208391.	
Y. Xing was partially supported by the NSF grant DMS-1753581 and DMS-2309590. The authors would also like to thank Professor Jianxian Qiu at Xiamen University for sharing the code for 2D Euler tests and providing helpful comments. 	

\appendix
\section{Proof of \cref{thm:lwthm}}\label{app:lwthm}
Recall that we denote by $u_h^{(1)} = u_h^n$. To prove a Lax--Wendroff convergence theorem, we will use the following result simplified from \cite[Theorems 2.3 and 3.2]{shi2018local}.
\begin{theorem}[Shi and Shu, 2018. \cite{shi2018local}]\label{thm:shishu}
	Let $f$ be Lipschitz continuous and $f'$, $f''$ be uniformly bounded in $L^\infty$. Consider a numerical scheme in $d$-dimensional space that yields \eqref{eq:avg}. 
	For any mesh cell $K$, its edge $e\in \partial K$, and its neighboring cell $K^\ext$, suppose $g_{e,K}$ satisfies the following properties on $B_K= K\cup K^\ext$. 
	\begin{enumerate}
		\item Consistency: if $u_h^n(x)\equiv u$ is a constant, then 
		$g_{e,K}(u_h^n) = |e|  {f(u)\cdot \nu_{e,K}}$.
		\item Boundedness: $|g_{e,K}({u}_{h}^n) - g_{e,K}({v}_{h}^n)|\leq C \|u_h^n - v_h^n\|_{L^\infty(B_K)}\cdot h^{d-1}.$
		\item Anti-symmetry: 
		$g_{e,K}(u_h^n) + g_{e,K^\ext}(u_h^n) = 0$, for $e = K \cap K^\ext$.
	\end{enumerate}
	If $u_h^n$ converges boundedly almost everywhere to a function $u$ as $\Delta t, h \to 0$, then $u$ is a weak solution to the conservation law saftisfying \eqref{eq:weak}.
\end{theorem}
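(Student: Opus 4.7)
The plan is to follow the classical Lax--Wendroff framework for conservative finite-volume schemes, adapted to the cell-average update \eqref{eq:avg}. The strategy is to test \eqref{eq:avg} against an arbitrary $\phi\in C_0^\infty(\mathbb{R}^d\times\mathbb{R}^+)$, apply discrete summation-by-parts in time to the left-hand side, use the antisymmetry hypothesis to rewrite cell-sums of $g_{e,K}$ as edge sums weighted by differences of $\phi$ on the right, and then combine consistency and Lipschitz boundedness with the hypothesized bounded a.e.\ convergence of $u_h^n$ to recover \eqref{eq:weak}.

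First I would multiply \eqref{eq:avg} by $|K|\,\phi_K^n$ (with $\phi_K^n$ some discretization of $\phi$ on $K\times[t^n,t^{n+1}]$, e.g., the value at a node or the local average) and sum over $K\in\mathcal{T}_h$ and $n\geq 0$. An Abel summation in $n$, combined with the compact support of $\phi$ that kills the terms at large $n$, converts the left-hand side into
\begin{equation*}
-\sum_K |K|\,\phi_K^0\, \bar{u}_{h,K}^0 \;-\; \sum_{n\geq 1,\,K} |K|\,\Delta t\, \bar{u}_{h,K}^n\,\frac{\phi_K^n-\phi_K^{n-1}}{\Delta t}.
\end{equation*}
Because $u_h^n$ is uniformly bounded and converges a.e.\ to $u$, the dominated convergence theorem together with the smoothness of $\phi$ shows that the first sum tends to $-\int_{\mathbb{R}^d} u_0\,\phi(x,0)\,\dd x$ and the second Riemann-like sum tends to $-\int_{\mathbb{R}^d\times\mathbb{R}^+} u\,\phi_t\,\dd x\,\dd t$.

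For the right-hand side, every interior edge $e$ is shared by two cells $K_1,K_2$; using the antisymmetry $g_{e,K_1}+g_{e,K_2}=0$ yields
\begin{equation*}
\sum_K \phi_K^n \sum_{e\in\partial K} g_{e,K}(u_h^n) = \sum_e \bigl(\phi_{K_1}^n-\phi_{K_2}^n\bigr)\, g_{e,K_1}(u_h^n).
\end{equation*}
Next I would replace $g_{e,K_1}(u_h^n)$ by the consistent value $|e|\,f(\tilde u_h^{e,n})\cdot\nu_{e,K_1}$, where $\tilde u_h^{e,n}$ is a constant extension of a suitable local average of $u_h^n$ near $e$; the boundedness hypothesis then controls the induced error by $C h^{d-1}\omega_h$ with $\omega_h$ vanishing in an integrated sense as $u_h^n\to u$. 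Using $\phi_{K_1}^n-\phi_{K_2}^n\approx -h\,\nabla\phi\cdot\nu_{e,K_1}$ from Taylor expansion and $|e|\approx h^{d-1}$, the edge sum becomes a Riemann sum whose limit is $\int_{\mathbb{R}^d\times\mathbb{R}^+} f(u)\cdot\nabla\phi\,\dd x\,\dd t$. Collecting all terms yields \eqref{eq:weak}.

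The main obstacle is the nonlinear passage to the limit: we are given only bounded a.e.\ convergence of $u_h^n$ rather than strong $L^\infty$ convergence, and $u_h^n$ may have jumps across cell interfaces, so the effective trace $\tilde u_h^{e,n}$ is not automatically close to $u_h^n$ in $L^\infty(B_K)$. The CFL assumption $\Delta t/h\leq C$ and the quasi-uniform mesh assumption are essential to pair the $h^{d-1}$ edge weight, the $\mathcal{O}(h)$ factor from $\phi_{K_1}-\phi_{K_2}$, the $\Delta t$ factor, and the $\mathcal{O}(h^{-d})$ cell count into an $\mathcal{O}(1)$ total, on which dominated convergence applied to the bounded sequence $f(u_h^n)\to f(u)$ a.e.\ can be invoked. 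Quantitative control of the residual $\omega_h$ via the Lipschitz continuity of $f$ (and, for refined estimates, the $L^\infty$ bound on $f''$ entering in a second-order Taylor remainder) is the most delicate piece; this is where the full strength of the hypotheses listed in the theorem is consumed.
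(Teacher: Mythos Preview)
The paper does not provide a proof of this statement: Theorem~\ref{thm:shishu} is quoted from \cite{shi2018local} (Theorems~2.3 and~3.2 therein) and is invoked as a black box. The paper's own work in the appendix is entirely devoted to verifying that the combined flux $g_{e,K}$ of the cRKDG scheme satisfies the three hypotheses of this theorem (Lemma~\ref{lem:g}), not to reproving the Lax--Wendroff convergence itself.

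Your outline follows the classical Lax--Wendroff template and is, at the level of a roadmap, the right one: multiply \eqref{eq:avg} by a discretized test function, Abel-sum in time, use anti-symmetry to turn the flux sum into an edge sum weighted by $\phi_{K_1}^n-\phi_{K_2}^n$, and pass to the limit via dominated convergence. You also correctly flag the genuine subtlety, namely that replacing $g_{e,K_1}(u_h^n)$ by the consistent value $|e|\,f(\tilde u_h^{e,n})\cdot\nu_{e,K_1}$ incurs an error bounded only by $Ch^{d-1}\|u_h^n-\tilde u_h^{e,n}\|_{L^\infty(B_K)}$, and this local oscillation does not go to zero pointwise under mere bounded a.e.\ convergence. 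In the Shi--Shu framework this is handled by showing that the resulting discrepancy, once multiplied by the $\mathcal{O}(h)$ factor from $\phi_{K_1}^n-\phi_{K_2}^n$ and summed, is controlled in an $L^1$-integrated sense and vanishes via dominated convergence applied to $f(u_h^n)\to f(u)$ a.e.; your sketch gestures at this but does not carry it out. Since the paper simply cites the result, there is no alternative argument to compare against---your proposal is a reasonable reconstruction of the cited proof, with the acknowledged gap being exactly the place where the full details in \cite{shi2018local} are required.
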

Hence to analyze the convergence of the cRKDG method, it suffices to verify that $g_{e,K}$ defined in \eqref{eq:g} does satisfy three properties in \cref{thm:shishu}. 
\begin{lem}\label{lem:g}
	Under assumptions in \cref{thm:lwthm}, the combined flux $g_{e,K}$ defined in \eqref{eq:g} satisfies the three properties in \cref{thm:shishu}.
\end{lem}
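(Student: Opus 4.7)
The plan is to verify the three properties of $g_{e,K}$ listed in \cref{thm:shishu} separately, with a preliminary local stability estimate on the inner stages $u_h^{(i)}$ serving as the common ingredient for both consistency and boundedness.

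First, I would record that because $\dxl$ is a purely local operator, the map $u_h^n|_K \mapsto u_h^{(i)}|_K$ is well-defined cell-by-cell through \eqref{eq:crkdg-1}. Using $\dxl f(w_h) = \Pi\,\nabla\cdot f(w_h)$, the $L^\infty$-stability of the $L^2$ projection on $\mathcal{P}^k(K)$ on quasi-uniform meshes, the chain rule $\nabla\cdot(f(w_h) - f(\tilde w_h)) = f'(w_h)\nabla(w_h - \tilde w_h) + (f'(w_h) - f'(\tilde w_h))\nabla \tilde w_h$, together with the inverse inequality and the assumed boundedness of $f'$ and $f''$, one obtains the local bound $\|\dxl f(w_h) - \dxl f(\tilde w_h)\|_{L^\infty(K)} \leq C h^{-1}\|w_h - \tilde w_h\|_{L^\infty(K)}$. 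Combined with the CFL assumption $\Delta t \leq C h$, a forward induction on $i = 1,\ldots,s$ through \eqref{eq:crkdg-1} then yields $\|u_h^{(i)} - v_h^{(i)}\|_{L^\infty(K)} \leq C\|u_h^n - v_h^n\|_{L^\infty(K)}$.

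With this estimate in hand, the three properties follow in turn. For \emph{consistency}, if $u_h^n \equiv u$ on $B_K$ then $\dxl f(u_h^n) = \Pi\,\nabla\cdot f(u) = 0$, so inductively $u_h^{(i)} \equiv u$; the consistency of $\widehat{f\cdot\nu_{e,K}}$ and the standard first-order RK condition $\sum_i b_i = 1$ then give $g_{e,K}(u_h^n) = |e|\,f(u)\cdot\nu_{e,K}$. For \emph{boundedness}, applying the stability estimate on both $K$ and $K^{\mathrm{ext}}$ and invoking the assumed Lipschitz property of the numerical flux yields $|\widehat{f\cdot\nu_{e,K}}(u_h^{(i)}) - \widehat{f\cdot\nu_{e,K}}(v_h^{(i)})| \leq C\|u_h^n - v_h^n\|_{L^\infty(B_K)}$, and integrating over $e$ supplies the factor $|e| \leq C h^{d-1}$. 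For \emph{anti-symmetry}, since $u_h^{(i)}|_K$ and $u_h^{(i)}|_{K^{\mathrm{ext}}}$ depend only on $u_h^n$ within their own cells, the interior/exterior traces along $e$ simply swap roles when viewed from $K^{\mathrm{ext}}$, so the standard anti-symmetry of single-valued numerical fluxes gives the cancellation stage by stage, and summing with the weights $b_i$ produces $g_{e,K} + g_{e,K^{\mathrm{ext}}} = 0$.

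The main obstacle is the preliminary Lipschitz estimate, which requires carefully tracking the loss of a factor $h^{-1}$ at each application of $\dxl f$ through the inverse inequality and ensuring it is absorbed by $\Delta t \leq C h$ from the CFL condition. A uniform $L^\infty$ bound on $u_h^n$ and $v_h^n$, implicit in the bounded-convergence hypothesis of \cref{thm:lwthm}, is needed to control the nonlinear cross term $(f'(w_h) - f'(\tilde w_h))\nabla \tilde w_h$; once that is in place, the finite number of stages $s$ yields a final constant independent of $h$ and $\Delta t$.
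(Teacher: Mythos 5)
Your proposal is correct and follows essentially the same route as the paper: an $L^\infty$-stability bound for the local projection, a chain-rule plus inverse-inequality estimate giving the $Ch^{-1}$ Lipschitz bound on $\dxl f$, an induction over the stages absorbing that factor via the CFL condition, and then consistency, boundedness, and anti-symmetry of $g_{e,K}$ deduced stage by stage from the corresponding properties of $\widehat{f\cdot\nu_{e,K}}$ and $\sum_i b_i=1$. The only cosmetic difference is that the paper establishes the needed uniform bound on the inner stages $u_h^{(i)}$ inside the induction (by comparing with $v_h^n\equiv 0$), whereas you defer it to the bounded-convergence hypothesis; this does not affect correctness.
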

Once \cref{lem:g} is proved, \cref{thm:lwthm} follows as a direct consequence of \cref{thm:shishu}. The rest of the section is dedicated to the proof of \cref{lem:g}, especially on the boundedness of $g_{e,K}$.

\begin{lem}\label{lem:L2projboundedness} Let $\rho$ be a $L^2$ and $L^\infty$ function. Then
$\|\Pi \rho \|_{L^\infty(K)} \leq C\| \rho\|_{L^\infty(K)}$.
\end{lem}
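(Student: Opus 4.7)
The plan is to combine the $L^2$-stability of the projection with a standard inverse inequality for polynomials on each cell, both of which are classical finite element ingredients.

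First, I would observe that since $V_h$ consists of functions which are polynomials of degree at most $k$ independently on each cell, the global $L^2$ projection $\Pi$ decouples cellwise: $(\Pi\rho)|_K$ is precisely the $L^2(K)$-orthogonal projection of $\rho|_K$ onto $\mathcal{P}^k(K)$. By orthogonality it satisfies
\begin{equation*}
\|\Pi\rho\|_{L^2(K)} \leq \|\rho\|_{L^2(K)} \leq |K|^{1/2}\|\rho\|_{L^\infty(K)}.
\end{equation*}

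Next, because $(\Pi\rho)|_K \in \mathcal{P}^k(K)$, a scaling argument to a reference element $\hat K$ together with the equivalence of all norms on the finite-dimensional space $\mathcal{P}^k(\hat K)$ yields the standard inverse inequality
\begin{equation*}
\|\Pi\rho\|_{L^\infty(K)} \leq C\,|K|^{-1/2}\,\|\Pi\rho\|_{L^2(K)},
\end{equation*}
where the constant $C$ depends only on $k$ and on the shape-regularity of $\mathcal{T}_h$, the latter being guaranteed by the quasi-uniformity assumed in \cref{thm:lwthm}. Concatenating the two displays gives
\begin{equation*}
\|\Pi\rho\|_{L^\infty(K)} \leq C\,|K|^{-1/2}\cdot|K|^{1/2}\,\|\rho\|_{L^\infty(K)} = C\,\|\rho\|_{L^\infty(K)},
\end{equation*}
which is the stated bound.

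There is no genuine obstacle in this argument; it is essentially a textbook estimate. The only point that requires care is the inverse inequality, which depends on the pullback from $\hat K$ to $K$ being uniformly well-behaved — this follows from shape regularity, already in force under the quasi-uniformity hypothesis. No further assumption on $\rho$ beyond membership in $L^2(K)\cap L^\infty(K)$ is needed, and the constant $C$ is independent of the mesh size $h$.
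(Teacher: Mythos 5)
Your proof is correct and is in substance the same as the paper's (the paper merely sketches an orthonormal-basis expansion of $\Pi\rho$ on $\mathcal{P}^k(K)$ and omits the details, which when carried out reduces to exactly the same two ingredients you use: $L^2(K)$-stability of the local projection and the scaled inverse inequality $\|p\|_{L^\infty(K)}\leq C|K|^{-1/2}\|p\|_{L^2(K)}$, both resting on norm equivalence on $\mathcal{P}^k(\hat K)$ and shape regularity). Your write-up is complete where the paper's is only a sketch, and the constant's dependence on $k$ and the mesh regularity is consistent with the paper's conventions.
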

\begin{proof}
This lemma can be proved by selecting an orthonormal basis of $\mathcal{P}^k(K)$ and expand $\Pi \rho$ under this basis. Details are omitted. 
\end{proof}

\begin{lem}\label{lem:estpinablaf}
	For any $u_h,v_h \in V_h$ with $\|u_h\|_{L^\infty}, \|v_h\|_{L^\infty}\leq C$, we have 
	\begin{equation}\label{lem:estpinablaf1} 
		\|\Pi\nabla\cdot \left(f(u_h) - f(v_h)\right)\|_{L^\infty(B_K)}\leq \frac{C}{h}\|u_h-v_h\|_{L^\infty(B_K)}.
	\end{equation}
\end{lem}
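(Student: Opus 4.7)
The plan is to reduce the estimate to (i) an $L^2$ projection bound and (ii) a product/Lipschitz decomposition combined with an inverse inequality on each mesh cell. The projection $\Pi$ is local to each $K$, so it suffices to show
\[
\|\nabla\cdot(f(u_h)-f(v_h))\|_{L^\infty(K')} \le \frac{C}{h}\|u_h-v_h\|_{L^\infty(K')}
\]
for each $K' \in \{K, K^\ext\}$, and then invoke \cref{lem:L2projboundedness} cell-by-cell. Taking the maximum over the two cells comprising $B_K$ will yield the claim.

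For the cellwise estimate, I would first observe that on each $K'$ the function $f(u_h)$ is smooth (since $f$ is smooth and $u_h|_{K'} \in \mathcal{P}^k(K')$), so the chain rule gives
\[
\nabla\cdot(f(u_h)-f(v_h))
=\bigl(f'(u_h)-f'(v_h)\bigr)\cdot\nabla u_h+f'(v_h)\cdot\nabla(u_h-v_h)
\]
pointwise on $K'$. Using the uniform bound on $f''$, the mean value theorem yields $|f'(u_h)-f'(v_h)|\le C|u_h-v_h|$, and the uniform bound on $f'$ controls the second term's coefficient. Therefore
\[
\bigl|\nabla\cdot(f(u_h)-f(v_h))\bigr|
\le C|u_h-v_h|\,|\nabla u_h|+C|\nabla(u_h-v_h)|.
\]

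Next I would invoke the standard inverse inequality for polynomials on a quasi-uniform mesh, namely $\|\nabla w_h\|_{L^\infty(K')}\le Ch^{-1}\|w_h\|_{L^\infty(K')}$ for $w_h\in\mathcal{P}^k(K')$. Applying this to $w_h=u_h$ (which is bounded in $L^\infty$ by assumption) and to $w_h=u_h-v_h$ produces
\[
\|\nabla\cdot(f(u_h)-f(v_h))\|_{L^\infty(K')}
\le C\|u_h-v_h\|_{L^\infty(K')}\cdot\frac{C}{h}+C\cdot\frac{C}{h}\|u_h-v_h\|_{L^\infty(K')}
\le \frac{C}{h}\|u_h-v_h\|_{L^\infty(K')}.
\]
Finally, \cref{lem:L2projboundedness} applied on each of $K$ and $K^\ext$ absorbs the projection without loss of constants, and $\|\cdot\|_{L^\infty(K')}\le\|\cdot\|_{L^\infty(B_K)}$ yields \eqref{lem:estpinablaf1}.

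The argument is essentially routine; the only subtlety is that $u_h$ is globally only piecewise polynomial, so $\nabla\cdot f(u_h)$ is not defined across interelement boundaries. This is harmless here because $\Pi$ acts elementwise and the desired $L^\infty$ norm on $B_K$ reduces to the maximum over the two constituent cells, on each of which the chain rule and inverse inequality apply cleanly. Uniform quasi-uniformity of the mesh is what allows the inverse constant to be written in terms of the global $h$ rather than the local diameter $h_{K'}$.
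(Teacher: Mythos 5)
Your proof is correct and follows essentially the same route as the paper's: the same add-and-subtract decomposition $(f'(u_h)-f'(v_h))\cdot\nabla u_h+f'(v_h)\cdot\nabla(u_h-v_h)$, the same use of the bounds on $f'$ and $f''$, the inverse inequality, and \cref{lem:L2projboundedness}. Your explicit remark that the estimate should be read cell-by-cell (since $\Pi$ is local and $\nabla\cdot f(u_h)$ is undefined across interelement boundaries) is a small point of added care that the paper leaves implicit, but it does not change the argument.
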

\begin{proof}
Applying \cref{lem:L2projboundedness} and after some algebraic manipulations, we have \begin{equation}\label{eq:estpigradf}
\begin{aligned}
		&\|\Pi\nabla\cdot \left(f(u_h) - f(v_h)\right)\|_{L^\infty(B_K)}\\
		\leq&  C\|\nabla\cdot \left(f(u_h) - f(v_h)\right)\|_{L^\infty(B_K)}\\
		  = & C\|f'(u_h)\cdot\nabla u_h - f'(v_h)\cdot\nabla v_h\|_{L^\infty(B_K)}\\
		=&  C\|f'(u_h)\cdot \nabla u_h - f'(v_h)\cdot \nabla u_h + f'(v_h)\cdot \nabla u_h - f'(v_h)\cdot \nabla v_h\|_{L^\infty(B_K)}\\
			\leq & C\|\left(f'(u_h)- f'(v_h)\right)\cdot\nabla u_h\|_{L^\infty(B_K)} + C\|f'(v_h)\cdot \nabla \left(u_h - v_h\right)\|_{L^\infty(B_K)}\\
			\leq& C\|f''\|_{L^\infty}\|u_h - v_h\|_{L^\infty(B_K)}\|\nabla u_h\|_{L^\infty(B_K)}+C\|f'\|_{L^\infty}\|\nabla(u_h-v_h)\|_{L^\infty(B_K)}.
		\end{aligned}	
	\end{equation}
	With the inverse estimate, we have $
		\|\nabla u_h\|_{L^\infty(B_K)}\leq {C}h^{-1}\|u_h\|_{L^\infty(B_K)}$ and $\|\nabla(u_h-v_h)\|_{L^\infty(B_K)}\leq {C}{h^{-1}}\|u_h-v_h\|_{L^\infty(B_K)}$.
	Recall that we assumed $f'$, $f''$, and $u_h$ are bounded in $L^\infty$. \eqref{lem:estpinablaf1} can 
 be obtained after substituting these estimates into \eqref{eq:estpigradf}.
\end{proof}

\begin{lem}\label{lem:eststage}
		$\|u_h^{(i)}-v_h^{(i)}\|_{L^\infty(B_K)}\leq C\|u_h^n-v_h^n\|_{L^\infty(B_K)}$, for all $1\leq i \leq s$. 
\end{lem}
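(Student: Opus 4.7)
The plan is to proceed by induction on the stage index $i$. The base case $i=1$ is immediate because $u_h^{(1)}=u_h^n$ and $v_h^{(1)}=v_h^n$, so the inequality holds with constant $C_1=1$.

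For the inductive step, I would subtract the two stage formulas \eqref{eq:crkdg-1}, using $\dxl f = \Pi\,\nabla\cdot f$, to get
\begin{equation*}
u_h^{(i)}-v_h^{(i)} = (u_h^n-v_h^n) - \Delta t\sum_{j=1}^{i-1} a_{ij}\,\Pi\,\nabla\cdot\bigl(f(u_h^{(j)})-f(v_h^{(j)})\bigr).
\end{equation*}
Crucially, the local operator $\dxl f$ acts cell-by-cell, so the right-hand side restricted to $B_K$ depends only on $u_h^{(j)}$ and $v_h^{(j)}$ restricted to $B_K$. Taking the $L^\infty(B_K)$ norm, applying the triangle inequality, and using \cref{lem:estpinablaf} term-by-term yields
\begin{equation*}
\|u_h^{(i)}-v_h^{(i)}\|_{L^\infty(B_K)} \leq \|u_h^n-v_h^n\|_{L^\infty(B_K)} + \frac{C\,\Delta t}{h}\sum_{j=1}^{i-1}|a_{ij}|\,\|u_h^{(j)}-v_h^{(j)}\|_{L^\infty(B_K)}.
\end{equation*}
Invoking the CFL hypothesis $\Delta t/h\leq C$ absorbs the factor $\Delta t/h$ into a constant, after which the inductive hypothesis applied to each $j<i$ closes the estimate with a new constant $C_i = 1 + C\sum_{j<i}C_j$, which depends only on $k$, $s$, the Butcher coefficients, and the CFL constant.

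The main obstacle, and what must be checked before \cref{lem:estpinablaf} can be invoked, is that the inner stages $u_h^{(j)}$ and $v_h^{(j)}$ are themselves uniformly bounded in $L^\infty$, since that hypothesis appears in \cref{lem:estpinablaf}. Under the standing assumption of \cref{thm:lwthm} that $u_h^n$ is bounded (so that $v_h^n$ is as well in the comparison), I would establish this by a parallel preliminary induction: the same calculation used to prove \cref{lem:estpinablaf} gives $\|\Pi\,\nabla\cdot f(w_h)\|_{L^\infty(B_K)} \leq Ch^{-1}\|w_h\|_{L^\infty(B_K)}$ for any $L^\infty$-bounded $w_h\in V_h$ (via the chain rule $\nabla\cdot f(w_h) = f'(w_h)\cdot\nabla w_h$, \cref{lem:L2projboundedness}, the inverse estimate, and the uniform bound on $f'$). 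Applying this bound in the stage formula for $u_h^{(i)}$, together with the CFL condition, yields $\|u_h^{(i)}\|_{L^\infty(B_K)}\leq C\|u_h^n\|_{L^\infty(B_K)}$, and likewise for $v_h^{(i)}$. With these preliminary bounds in hand, the hypotheses of \cref{lem:estpinablaf} are satisfied at every stage, and the induction outlined above completes the proof.
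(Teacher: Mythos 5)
Your proof is correct and follows essentially the same route as the paper: induction on the stage index, subtracting the stage formulas \eqref{eq:crkdg-1}, invoking \cref{lem:estpinablaf} term by term, and closing with the CFL condition $\Delta t/h\leq C$. The only (harmless) difference is how the $L^\infty$ boundedness of the inner stages is secured before \cref{lem:estpinablaf} may be applied: you derive it from the direct estimate $\|\Pi\,\nabla\cdot f(w_h)\|_{L^\infty(B_K)}\leq Ch^{-1}\|w_h\|_{L^\infty(B_K)}$ via the inverse inequality, whereas the paper obtains it for free from the induction hypothesis by taking $v_h^n\equiv 0$ on $B_K$ and observing that all of its stages then vanish; both arguments are valid.
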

\begin{proof}
	We prove the lemma by induction. For $i = 1$, by definition we have $u_h^{(1)} = u_h^n$ and $v_h^{(1)} = v_h^n$. The inequality is true with $C = 1$ for all $u_h^n,v_h^n \in V_h$. 
	
	Now we assume that the inequality is true with $i\leq m$.
	
	First, we want to show that the induction hypothesis implies
	\begin{equation*}
\|u_h^{(i)}\|_{L^\infty(B_K)}\leq C\|u_h^n\|_{L^\infty(B_K)} \quad \forall 1\leq i\leq m.
	\end{equation*}
	Indeed, note that when $v_h^n \equiv 0$ on $B_K$,  $f(v_h^n)\equiv 0$ is a constant. Hence $v_h^{(2)} = 0 - a_{21}\Delta t \dxl 0 = 0$. Similarly, we have $v_h^{(i)}\equiv 0$ for all $1\leq i\leq m$. By the induction hypothesis, we can see that for all $1\leq i\leq m$, 
	\begin{equation*}
		\|u_h^{(i)}\|_{L^\infty(B_K)} = \|u_h^{(i)}-v_h^{(i)}\|_{L^\infty(B_K)}\leq C\|u_h^n-v_h^{n}\|_{L^\infty(B_K)} =  C\|u_h^n\|_{L^\infty(B_K)}.
	\end{equation*}
	
	Then we prove the lemma is true for $i = m+1$. It can be seen that 
	\begin{equation*}
		\begin{aligned}
			&\|u_h^{(m+1)}-v_h^{(m+1)}\|_{L^\infty(B_K)} \\
			=& \left\|\left(u_h^n - {\Delta t}\sum_{j = 1}^{m} a_{ij}\Pi \nabla\cdot f\left(u_h^{(j)}\right)\right)-\left(v_h^n - {\Delta t}\sum_{j = 1}^{m} a_{ij}\Pi \nabla\cdot f\left(v_h^{(j)}\right)\right)\right\|_{L^\infty(B_K)} \\
			\leq & \|u_h^n-v_h^n\|_{L^\infty(B_K)}+{\Delta t}\sum_{j = 1}^{m} |a_{ij}|\left\|\Pi \nabla\cdot \left(f\left(u_h^{(j)}\right)-f\left(v_h^{(j)}\right)\right)\right\|_{L^\infty(B_K)}.
		\end{aligned}
	\end{equation*}
	According to the first part of the proof, $\nm{u_h^{(j)}}_{L^\infty}\leq C\nm{u_h^n}_{L^\infty}\leq C$ and $\nm{v_h^{(j)}}_{L^\infty}\leq C\nm{v_h^n}_{L^\infty}\leq C$ are bounded. Hence with \cref{lem:estpinablaf}, it yields
	\begin{equation*}
	    \|u_h^{(m+1)}-v_h^{(m+1)}\|_{L^\infty(B_K)} \leq \|u_h^n-v_h^n\|_{L^\infty(B_K)}+\frac{C\Delta t}{h}\sum_{j = 1}^{m} |a_{ij}|\left\|u_h^{(j)}-v_h^{(j)}\right\|_{L^\infty(B_K)}.
	\end{equation*}
    One can then prove the lemma after using the CFL condition $\Delta t/h \leq C$ and the induction hypothesis $	\|u_h^{(j)}-v_h^{(j)}\|_{L^\infty(B_K)}\leq C\|u_h^n-v_h^n\|_{L^\infty(B_K)}$ for all $1\leq j \leq m$. 
\end{proof}

\begin{proof}[Proof of \cref{lem:g}]
	The consistency of $g_{e,K}$ can be obtained from the consistency of $\widehat{f}$ and the consistency of the RK method $\sum_{i=1}^{s}b_i = 1$. The anti-symmetry of $g_{e,K}$ can be obtained from the anti-symmetry of $\widehat{f}$. The key is to show the boundedness of $g_{e,K}$ as follows.
	\begin{equation*}
		\begin{aligned}
			|g_{e,K}(u_h) - g_{e,K}(v_h)|
			=&\left|\int_e \left(\sum_{i=1}^s b_i\widehat{f\cdot \nu_{e,K}}\left(u_h^{(i)}\right)\right) \dd l - \int_e \left(\sum_{i=1}^s b_i\widehat{f\cdot \nu_{e,K}}\left(v_h^{(i)}\right)\right) \dd l\right|\\
			\leq& \sum_{i=1}^{s-1}\int_e  \left|b_i\right|\left|\widehat{f\cdot \nu_{e,K} }\left(u_h^{(i)}\right)-\widehat{f\cdot \nu_{e,K} }\left(v_h^{(i)}\right)\right| \dd l\\
			\leq& C\sum_{i=1}^{s-1}|b_i||e| \nm{u_h^{(i)}-v_h^{(i)}}_{L^\infty} 
			\leq C\nm{u_h^{n}-v_h^{n}}_{L^\infty}\cdot h^{d-1}.
		\end{aligned}
	\end{equation*}
	Here we have used the Lipschitz continuity of $\widehat{f\cdot \nu_{e,K}}$ in the second last inequality and \cref{lem:eststage} in the last inequality. 
\end{proof}

\bibliography{references_abbr}  
\bibliographystyle{siamplain}

\end{document}